%%% ~/SYMPFLOW/dgsflow.tex 16 July 2019 15:30 ZH time
%%%
%%%DAS - changes marked "%%%DAS"
%%%
\documentclass[12pt]{article}
\usepackage{amsthm,amsfonts,amsmath,amscd,amssymb,latexsym}    
\usepackage{mathrsfs}
\usepackage{epsfig,graphicx,color}  
\usepackage[all]{xy}  
\usepackage{makeidx}
\usepackage{hyperref}
%%%%%

\title{The Donaldson geometric flow\\
for symplectic four-manifolds}

\author{
Robin~Krom\thanks{Partially supported by the 
Swiss National Science Foundation Grant 200021-127136}
\, \& Dietmar~A.~Salamon\\
ETH Z\"urich
}

\date{16 July 2019}

%%%%%%% macros
%
\newtheorem{PARA}{}[section] 
\newtheorem{theorem}[PARA]{Theorem} 
\newtheorem{corollary}[PARA]{Corollary} 
\newtheorem{lemma}[PARA]{Lemma} 
\newtheorem{proposition}[PARA]{Proposition} 
\newtheorem{definition}[PARA]{Definition}    
\newtheorem{remark}[PARA]{Remark}

\newcommand{\p}{\partial}
\newcommand{\one}{{{\mathchoice {\rm 1\mskip-4mu l} {\rm 1\mskip-4mu l}
{\rm 1\mskip-4.5mu l} {\rm 1\mskip-5mu l}}}}
\newcommand{\C}{{\mathbb C}}
\renewcommand{\H}{{\mathbb H}}

\newcommand{\R}{{\mathbb R}}

\newcommand{\cE}{{\mathcal E}}
\newcommand{\cG}{{\mathcal G}} 
\newcommand{\cH}{{\mathcal H}}
\newcommand{\cJ}{{\mathcal J}}
\newcommand{\cL}{{\mathcal L}} 
\newcommand{\cM}{{\mathcal M}} 
\newcommand{\cS}{{\mathcal S}}
\newcommand{\sE}{\mathscr{E}}    
\newcommand{\sF}{\mathscr{F}}    
\newcommand{\sG}{\mathscr{G}}    
\newcommand{\sH}{\mathscr{H}}    
\newcommand{\sS}{\mathscr{S}}    
\newcommand{\Om}{{\Omega}}
\newcommand{\om}{{\omega}}

\renewcommand{\phi}{{\varphi}}
\newcommand{\fhat}{{\widehat{f}}}
\newcommand{\uhat}{{\widehat{u}}}
\newcommand{\xhat}{{\widehat{x}}}
\newcommand{\Hhat}{{\widehat{H}}}
\newcommand{\Khat}{{\widehat{K}}}
\newcommand{\rhohat}{{\widehat{\rho}}}
\newcommand{\lahat}{{\widehat{\lambda}}}
\newcommand{\Thetahat}{{\widehat{\Theta}}}
\newcommand{\tom}{{\widetilde{\om}}}
\newcommand{\tJ}{{\widetilde{J}}}
\newcommand{\tg}{{\widetilde{g}}}
\newcommand{\im}{{\mathrm{im}}} 
\newcommand{\id}{{\mathrm{id}}} 
\newcommand{\Vol}{{\mathrm{Vol}}}  
\newcommand{\ndg}{{\mathrm{ndg}}} 
\newcommand{\grad}{{\mathrm{grad}}}  
\newcommand{\Lie}{{\mathrm{Lie}}} 
\newcommand{\Diff}{{\mathrm{Diff}}} 
\newcommand{\Vect}{{\mathrm{Vect}}}  
\newcommand{\Symp}{{\mathrm{Symp}}}   
\newcommand{\Ham}{{\mathrm{Ham}}}  
\newcommand{\Hom}{{\mathrm{Hom}}}   
\newcommand{\FS}{{\mathrm{FS}}}
\renewcommand{\i}{{\mathbf{i}}}
\renewcommand{\j}{{\mathbf{j}}}
\renewcommand{\k}{{\mathbf{k}}}
\newcommand{\dvol}{{\rm dvol}}
\newcommand{\G}{{\mathrm{G}}} 
\newcommand{\GL}{{\mathrm{GL}}} 
\newcommand{\Sp}{{\mathrm{Sp}}} 
\newcommand{\fg}{{\mathfrak{g}}} 
\newcommand{\Cinf}{C^{\infty}}
\newcommand{\CP}{{\C\mathrm{P}}}
\newcommand{\inner}[2]{\langle #1, #2\rangle}
\newcommand{\INNER}[2]{\left\langle #1, #2\right\rangle}
\def\NABLA#1{{\mathop{\nabla\kern-.5ex\lower1ex\hbox{$#1$}}}}
\def\Nabla#1{\nabla\kern-.5ex{}_{#1}}
\def\abs#1{\mathopen|#1\mathclose|}
\def\Abs#1{\left|#1\right|}

\def\Norm#1{\left\|#1\right\|}
%
%%%%%%% main paper

\begin{document}
\maketitle

\begin{abstract}
This is an exposition of the Donaldson geometric flow
on the space of symplectic forms on a closed smooth 
four-manifold, representing a fixed cohomology class. 
The original work appeared in~\cite{DON1}.
\end{abstract}

%%%%%%%%%%%%%%%%%%%%%%%%%%%%%%%%%%%%%%%%%%%%%%%%%%
%%%%%%%%%%%%%%%%%%%%%%%%%%%%%%%%%%%%%%%%%%%%%%%%%%
%%%%%%%%%%%%%%%%%%%%%%%%%%%%%%%%%%%%%%%%%%%%%%%%%%
%%%%%%%%%%%%%%%%%%%%%%  SECTION 1  %%%%%%%%%%%%%%%
%%%%%%%%%%%%%%%%%%%%%%%%%%%%%%%%%%%%%%%%%%%%%%%%%%
%%%%%%%%%%%%%%%%%%%%%%%%%%%%%%%%%%%%%%%%%%%%%%%%%%
%%%%%%%%%%%%%%%%%%%%%%%%%%%%%%%%%%%%%%%%%%%%%%%%%%

\section{Introduction}\label{sec:INTRO}

For any closed symplectic four-manifold $(M,\om)$ it is an open 
question whether the space of symplectic forms on $M$
representing the same cohomology class as $\om$ is connected.
By Moser isotopy a positive answer to this question is equivalent 
to the assertion that every symplectic form in the cohomology class
of $\om$ is diffeomorphic to $\om$ via a diffeomorphism that 
is isotopic to the identity.  In the case of the projective plane
it follows from theorems of Gromov and Taubes that a positive 
answer is equivalent to the assertion that a diffeomorphism 
is isotopic to the identity if and only if it induces the identity on homology.
In the case of the four-torus a positive answer is a longstanding 
conjecture in symplectic topology.   This is part of the circle of 
questions around the uniqueness problem in symplectic
topology as discussed in~\cite{SAL}.  A remarkable 
geometric flow approach to the uniqueness problem in dimension 
four was explained by Donaldson in a lecture in Oxford in the 
spring of 1997 (attended by the second author) and written up 
in~\cite{DON1}. The purpose of this expository paper is to explain
some of the details.

\bigbreak

The starting point of Donaldson's approach is 
the obervation that the space of diffeomorphism of a hyperK\"ahler 
surface $M$ can be viewed as an infinite-dimensional hyperK\"ahler
manifold, that the group of symplectomorphisms associated 
to a preferred symplectic structure $\om$ acts on the right by 
hyperK\"ahler isometries, and that this group action is generated 
by a hyperK\"ahler moment map.  
In analogy to the finite-dimensional setting one can then study 
the gradient flow of the square of the hyperK\"ahler moment map.  
Pushing $\om$ forward under the diffeomorphisms of $M$
one obtains a geometric flow on the space $\sS_a$ of symplectic forms
in the cohomology class $a:=[\om]$.   It turns out that
this geometric flow is well defined for each symplectic four-manifold
$(M,\om)$ equipped with a Riemannian metric $g$ that
is compatible with $\om$.  It is the gradient flow of the 
energy functional
\begin{equation}\label{eq:energy}
\sE(\rho) := \int_M
\frac{2\abs{\rho^+}^2}{\abs{\rho^+}^2-\abs{\rho^-}^2}\,\dvol,
\qquad \rho\in\sS_a,
\end{equation}
with respect to a suitable metric on $\sS_a$.
To describe this metric, we recall the well known observation
(also used in~\cite{DON2})  that for every positive 
rank-$3$ subbundle ${\Lambda^+\subset\Lambda^2T^*M}$
and every positive volume form $\dvol\in\Om^4(M)$ there is 
a unique Riemannian metric on~$M$ with volume form $\dvol$ 
such that~$\Lambda^+$ is the bundle of self-dual $2$-forms
(Theorem~\ref{thm:DONfour}). 
Second, every $\rho\in\sS_a$ 
determines an involution $R^\rho:\Om^2(M)\to\Om^2(M)$ which 
sends $\rho$ to $-\rho$ and acts as the identity on the orthogonal 
complement of $\rho$ with respect to the exterior product; it is given 
by $R^\rho\tau:=\tau-2\frac{\tau\wedge\rho}{\rho\wedge\rho}\rho$
and preserves the pairing. Thus every $\rho\in\sS_a$ determines 
a unique Riemannian metric $g^\rho$ on $M$ with the same volume 
form as $g$ such that $\tau$ is self-dual with respect to $g$
if and only if $R^\rho\tau$ is self-dual with respect to $g^\rho$
(Theorem~\ref{thm:FOUR}).  For $\rho\in\sS_a$ denote by 
$*^\rho:\Om^k(M)\to\Om^{4-k}(M)$ the Hodge $*$-operator of~$g^\rho$.
Then the {\bf Donaldson metric} on the infinite-dimensional 
manifold $\sS_a$ is given by 
\begin{equation}\label{eq:metric}
\Norm{\rhohat}_\rho^2 := \int_M\lambda\wedge *^\rho\lambda,\qquad
d\lambda=\rhohat,\qquad *^\rho\lambda\mbox{ is exact},
\end{equation}
for $\rhohat\in T_\rho\sS_a=\im(d:\Om^1(M)\to\Om^2(M))$
(see Definition~\ref{def:DONmet}). 
Now the differential of the energy functional $\sE:\sS_a\to\R$ 
at a point $\rho\in\sS_a$ is the linear map 
$\rhohat\mapsto\int_M\Theta^\rho\wedge\rhohat$
where the $2$-form $\Theta^\rho\in\Om^2(M)$ is given by
\begin{equation}\label{eq:Thetarho}
\Theta^\rho := *\frac{\rho}{u}-\frac12\Abs{\frac{\rho}{u}}^2\rho,\qquad
u := \frac{\rho\wedge\rho}{2\dvol}.
\end{equation}
This is the pointwise orthogonal projection of the $2$-form $u^{-1}*\rho$
onto the orthogonal complement of $\rho$ with respect to the exterior product.

\bigbreak

The negative gradient flow of the energy functional 
$\sE:\sS_a\to\R$ in~\eqref{eq:energy} with respect 
to the Donaldson metric~\eqref{eq:metric} has the form 
\begin{equation}\label{eq:dongeoflow}
\p_t\rho = d*^\rho d\Theta^\rho,
\end{equation}
where $\Theta^\rho\in\Om^2(M)$ is given by~\eqref{eq:Thetarho}
(Proposition~\ref{prop:DGF}).  This is the {\bf Donaldson geometric flow}.
The purpose of the present paper is to explain some of the 
geometric properties of this flow, and to give an exposition 
of the necessary background material. 
This includes a discussion of the Riemannian metrics~$g^\rho$ 
which is relegated to Appendix~\ref{app:FOUR}. 
%These metrics play a central role in the study of 
%equation~\eqref{eq:dongeoflow} and their use led 
%to significant simplifications in the analysis.  
The Donaldson geometric flow in the original
hyperK\"ahler moment map setting is explained 
in Section~\ref{sec:MOMENT}, for general symplectic 
four-manifolds it is discussed in Section~\ref{sec:GENERAL}, 
and  the Hessian of the energy functional is examined 
in Section~\ref{sec:HESSIAN}. 

The motivation for this study is the dream that the solutions 
of~\eqref{eq:dongeoflow} can be used to settle 
the uniqueness problem for symplectic structures in dimension
four in some favourable cases such as hyperK\"ahler surfaces 
or the complex projective plane.  This is backed up by the 
observations that the symplectic form $\om$ is the unique absolute 
minimum of $\sE$ (Corollary~\ref{cor:DGF}) and the Hessian of 
$\sE$ at $\om$ is positive definite (Corollary~\ref{cor:MIN}). 
For $M=\CP^2$ we prove that the Fubini--Study form is the 
only critical point (Proposition~\ref{prop:CP2}).
The present exposition also includes a proof of Donaldson's 
observation that higher critical points cannot be strictly stable 
in the hyperK\"ahler setting (Theorem~\ref{thm:DON}).  
Local existence and uniqueness and regularity for the solutions 
of~\eqref{eq:dongeoflow} are estabished in the followup 
paper~\cite{KROM} for which the present paper provides 
the necessary background. Key problems for future research 
include long-time existence and to show that the solutions 
cannot {\it escape to infinity}.

{\bf Sign Conventions.}\label{notation}
Let $(M,\om)$ be a symplectic manifold and let $\G$
be a Lie group with Lie algebra $\fg:=\Lie(\G)$ that acts 
covariantly on $M$ by symplectomorphisms. Denote the 
infinitesimal action by ${\fg\to\Vect(M):\xi\mapsto X_\xi}$.
We use the sign convention ${[X,Y]:=\Nabla{Y}X-\Nabla{X}Y}$
for the Lie bracket of vector fields so the 
infinitesimal action is a Lie algebra homomorphism. We use the sign 
convention ${\iota(X_H)\om=dH}$ for Hamiltonian vector 
fields so the map ${\Cinf(M)\to\Vect(M):H\mapsto X_H}$
is a Lie algebra homomorphism with respect to the Poisson
bracket ${\{F,G\}:=\om(X_F,X_G)}$. The group action is called
{\it Hamiltonian} if there is a $\G$-equivariant
{\it moment map} $\mu:M\to\fg^*$ such that~$X_\xi$ is the 
Hamiltonian vector field of $H_\xi:=\inner{\mu}{\xi}$ for $\xi\in\fg$. 
If $\fg$ is equipped with an invariant inner product 
it is convenient to write $\mu:M\to\fg$.

{\bf Acknowledgement.}
Thanks to Simon Donaldson for many enlightening discussions.
 
\newpage

%%%%%%%%%%%%%%%%%%%%%%%%%%%%%%%%%%%%%%%%%%%%%%%%%%
%%%%%%%%%%%%%%%%%%%%%%%%%%%%%%%%%%%%%%%%%%%%%%%%%%
%%%%%%%%%%%%%%%%%%%%%%%%%%%%%%%%%%%%%%%%%%%%%%%%%%
%%%%%%%%%%%%%%%%%%%%%%  SECTION 2  %%%%%%%%%%%%%%%
%%%%%%%%%%%%%%%%%%%%%%%%%%%%%%%%%%%%%%%%%%%%%%%%%%
%%%%%%%%%%%%%%%%%%%%%%%%%%%%%%%%%%%%%%%%%%%%%%%%%%
%%%%%%%%%%%%%%%%%%%%%%%%%%%%%%%%%%%%%%%%%%%%%%%%%%

\section{The Moment Map Picture}\label{sec:MOMENT}

Throughout this section $M$ denotes a closed hyperK\"ahler surface
with symplectic forms $\om_1,\om_2,\om_3$ and complex structures
$J_1,J_2,J_3$.  Thus each $J_i$ is compatible with $\om_i$,
the resulting Riemannian metric 
$$
\inner{\cdot}{\cdot} := \om_i(\cdot,J_i\cdot)
$$ 
is independent of~$i$, and the complex structures satisfy the quaternion 
relations $J_iJ_j=-J_jJ_i=J_k$ for every cyclic permutation $i,j,k$ of $1,2,3$.
Let $(S,\sigma)$ be a symplectic four-manifold that is diffeomorphic
to $M$ and define
\begin{equation}\label{eq:F}
\sF := \left\{f:S\to M\,\Big|\,\begin{array}{l}
f\mbox{ is a diffeomorphism and} \\
\mbox{the $2$-form }f^*\om_1-\sigma\mbox{ is exact}
\end{array}\right\}.
\end{equation}
This space need not be connected.  Assume it is nonempty. 
(Whether this implies that $(S,\sigma)$ is symplectomorphic 
to $(M,\om_1)$ is an open question.)  Then the space $\sF$ is a 
$C^1$ open set in the space of all smooth maps from $S$ to $M$ 
and can be viewed formally as an infinite-dimensional hyperK\"ahler 
manifold. Its tangent space at $f\in\sF$ is the space of vector fields 
along $f$ and will be denoted by
$
T_f\sF = \Om^0(S,f^*TM).
$
The three complex structures are given by 
$T_f\sF\to T_f\sF:\fhat\mapsto J_i\fhat$ and the three
symplectic forms are given by 
\begin{equation}\label{eq:Omi}
\Om_i(\fhat_1,\fhat_2) 
:= \int_S\om_i(\fhat_1,\fhat_2)\,\dvol_\sigma,\qquad
\dvol_\sigma := \frac{\sigma\wedge\sigma}{2}
\end{equation}
for $\fhat_1,\fhat_2\in T_f\sF$.  The group
\begin{equation}\label{eq:G}
\sG := \Symp(S,\sigma) 
:= \left\{\phi\in\Diff(S)\,\big|\,\phi^*\sigma=\sigma\right\}
\end{equation}
of symplectomorphism of $(S,\sigma)$ acts contravariantly 
on $\sF$ by composition on the right.  This group action 
preserves the hyperK\"ahler structure of $\sF$.
The quotient space $\sF/\sG$ is homeomorphic
to the space $\sS$ of all symplectic forms on $M$ that are 
cohomologous to $\om_1$ and diffeomorphic to~$\sigma$ via the 
homeomorphism $\sF/\sG\to\sS:[f]\mapsto(f^{-1})^*\sigma$. 
The action of the subgroup
\begin{equation}\label{eq:G0}
\sG_0 := \Ham(S,\sigma) 
\end{equation}
of Hamiltonian symplectomorphisms is Hamiltonian 
for all three symplectic forms on $\sF$.
This is the content of the next proposition.
We identify the Lie algebra of $\sG_0$ with the space 
of smooth real valued functions on $S$ with mean value
zero and its dual space with the quotient $\Om^0(S)/\R$ 
via the $L^2$ inner product associated to the volume
form $\dvol_\sigma$. 

\begin{proposition}[{\bf Moment Map}]\label{prop:moment}
The map 
\begin{equation}\label{eq:mui}
\mu_i:\sF\to\Om^0(S),\qquad
\mu_i(f):=\frac{f^*\om_i\wedge\sigma}{\dvol_\sigma},
\end{equation}
is a moment map for the covariant action 
$$
\sG_0\times\sF\to\sF:(\phi,f)\mapsto f\circ\phi^{-1}
$$
with respect to the symplectic form $\Om_i$.
\end{proposition}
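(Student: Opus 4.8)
The plan is to verify the two conditions from the Sign Conventions: the Hamiltonian vector field identity $\iota(X_\xi)\Om_i = dH_\xi$ with $H_\xi := \inner{\mu_i}{\xi}$, and $\sG_0$-equivariance of $\mu_i$. First I would compute the infinitesimal action. Write $\xi = H\in\Om^0(S)$ (mean value zero) and let $X_H\in\Vect(S)$ be the Hamiltonian vector field determined by $\iota(X_H)\sigma = dH$. Differentiating the covariant action $(\phi,f)\mapsto f\circ\phi^{-1}$ along the flow of $X_H$ at $\phi=\id$, the inverse contributes a sign and yields the generating vector field $X_\xi(f) = -df\,X_H\in\Om^0(S,f^*TM)$. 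The corresponding Hamiltonian on $\sF$ is then $H_\xi(f) = \inner{\mu_i(f)}{H} = \int_S H\,f^*\om_i\wedge\sigma$.

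The core step is the first variation of $H_\xi$. Pick a path $f_s$ in $\sF$ with $f_0=f$ and $\frac{d}{ds}\big|_{s=0}f_s = \fhat$, and set $\alpha\in\Om^1(S)$, $\alpha(v):=\om_i(\fhat,df\,v)$. Since $\om_i$ is closed, the first-variation formula for pullbacks gives $\frac{d}{ds}\big|_{s=0}f_s^*\om_i = d\alpha$, so that $dH_\xi(f)\fhat = \int_S H\,d\alpha\wedge\sigma$. Because $\sigma$ is closed we have $d\alpha\wedge\sigma = d(\alpha\wedge\sigma)$, and Stokes' theorem on the closed manifold $S$ converts this into $dH_\xi(f)\fhat = -\int_S dH\wedge\alpha\wedge\sigma$; in particular only $dH$ enters, consistent with $\xi$ being a Hamiltonian.

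It remains to recognise the right-hand side as $\Om_i$. From $\dvol_\sigma=\tfrac12\sigma\wedge\sigma$ and $\iota(X_H)\sigma=dH$ one gets the pointwise identity $\iota(X_H)\dvol_\sigma = dH\wedge\sigma$. Contracting the vanishing $5$-form $\alpha\wedge\dvol_\sigma$ with $X_H$ then gives $\alpha(X_H)\,\dvol_\sigma = \alpha\wedge dH\wedge\sigma$, i.e.\ $dH\wedge\alpha\wedge\sigma = -\alpha(X_H)\,\dvol_\sigma$. Substituting and using $\alpha(X_H)=\om_i(\fhat,df\,X_H) = \om_i(X_\xi(f),\fhat)$ yields $dH_\xi(f)\fhat = \int_S\om_i(X_\xi(f),\fhat)\,\dvol_\sigma = \Om_i(X_\xi(f),\fhat)$, which is exactly $\iota(X_\xi)\Om_i = dH_\xi$.

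Finally, for equivariance I would substitute directly: every $\phi\in\sG_0$ satisfies $\phi^*\sigma=\sigma$ and hence preserves $\dvol_\sigma$, so $\mu_i(f\circ\phi^{-1}) = \mu_i(f)\circ\phi^{-1}$, which is the coadjoint action on $\Om^0(S)/\R$. I expect the genuine obstacle to be the sign and degree bookkeeping rather than any deep difficulty: the conclusion rests on the precise interplay of the three conventions $\iota(X_H)\sigma=dH$, $X_\xi(f)=-df\,X_H$ and the Koszul sign in $\iota(X_H)(\alpha\wedge\dvol_\sigma)=0$, together with a careful application of the first-variation formula $\frac{d}{ds}f_s^*\om_i = d\alpha$ for a section $\fhat$ of $f^*TM$ that need not be the restriction of a vector field on $M$.
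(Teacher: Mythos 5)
Your proposal is correct and is essentially the paper's own proof run in the opposite direction: the paper starts from $\Om_i(-df\circ X_H,\fhat)$ and transforms it into $\int_S H\,d\alpha_i\wedge\sigma = dH_\xi(f)\fhat$, using exactly the identities you invoke (the infinitesimal action $-df\circ X_H$, the Cartan-type first variation $\frac{d}{ds}f_s^*\om_i=d\alpha$, the contraction $\iota(X_H)\dvol_\sigma=dH\wedge\sigma$, the Koszul identity from the vanishing $5$-form, and Stokes). Your added explicit check of $\sG_0$-equivariance, which the paper leaves implicit, is also correct.
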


\begin{proof}
The infinitesimal covariant action of a smooth function 
$H:S\to\R$ with mean value zero on $\sF$ is given by 
the vector field on $\sF$ which assigns to each
$f\in\sF$ the vector field $-df\circ X_H\in T_f\sF$
along $f$.  Here $X_H\in\Vect(S)$ is the Hamiltonian vector 
field on $S$ associated to $H$ and is determined by the equation
$\iota(X_H)\sigma=dH$.  The minus sign appears because 
composition on the right defines a contravariant action of 
$\sG_0$ and the covariant action is
given by composition with $\phi^{-1}$ on the right.
By Cartan's formula, the differential of the map 
$\mu_i:\sF\to\Om^0(S)$ in~\eqref{eq:mui} at $f$ 
in the direction $\fhat\in T_f\sF$ is given by
\begin{equation}\label{eq:dmui}
d\mu_i(f)\fhat = \frac{d\alpha_i\wedge\sigma}{\dvol_\sigma},\qquad
\alpha_i := \om_i(\fhat,df\cdot)\in\Om^1(S).
\end{equation}
Now contract the vector field $f\mapsto -df\circ X_H$ 
on $\sF$ with the symplectic form~$\Om_i$ to obtain
\begin{eqnarray*}
\Om_i(-df\circ X_H,\fhat)
&=&
\int_S\om_i(\fhat,df\circ X_H)\,\dvol_\sigma \\
&=&
\int_S(\iota(X_H)\alpha_i)\dvol_\sigma \\
&=&
\int_S\alpha_i\wedge\iota(X_H)\dvol_\sigma \\
&=&
\int_S\alpha_i\wedge dH\wedge\sigma \\
&=&
\int_S Hd\alpha_i\wedge\sigma \\
&=&
\int_SHd\mu_i(f)\fhat\,\dvol_\sigma.
\end{eqnarray*}
The last term is the differential of the function
$\sF\to\R:f\mapsto\inner{\mu_i(f)}{H}_{L^2}$ 
at $f$ in the direction $\fhat$.
This proves Proposition~\ref{prop:moment}.
\end{proof}

The norm squared of the moment map in the hyperK\"ahler setting
is the function 
$
\sE:=\tfrac12(\Norm{\mu_1}^2+\Norm{\mu_2}^2+\Norm{\mu_3}^2),
$
where the norm on the (dual of the) Lie algebra is associated 
to an invariant inner product.  In the case at hand
the invariant inner product is the $L^2$ inner product
on $\Om^0(S)$ and the norm squared of the moment map 
is the energy functional $\sE:\sF\to\R$ given by
\begin{equation}\label{eq:energyf}
\sE(f) := \frac12\int_S\sum_{i=1}^3\abs{H_i}^2\,\dvol_\sigma,\qquad
H_i := \frac{f^*\om_i\wedge\sigma}{\dvol_\sigma}.
\end{equation}
We next examine the negative gradient flow lines of $\sE$ 
with respect to the hyperK\"ahler metric on $\sF$, given by
$$
\INNER{\fhat_1}{\fhat_2}_{L^2} 
:= \int_S\inner{\fhat_1}{\fhat_2}\,\dvol_\sigma
\qquad\mbox{for }\fhat_1,\fhat_2\in T_f\sF.
$$ 

\begin{proposition}[{\bf Gradient Flow}]\label{prop:gradientf}
An isotopy $\R\to\sF:t\mapsto f_t$ is a negative 
$L^2$ gradient flow line of the energy functional $\sE:\sF\to\R$
in~\eqref{eq:energyf} if and only if it satisfies the partial 
differential equation
\begin{equation}\label{eq:gradientf}
\p_tf_t = \sum_{i=1}^3J_idf_t\circ X_{H_{it}},\qquad 
H_{it} := \frac{f_t^*\om_i\wedge\sigma}{\dvol_\sigma},\qquad
\iota(X_{H_{it}})\sigma = dH_{it}.
\end{equation}
\end{proposition}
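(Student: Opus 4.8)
The plan is to recognize $\sE$ in \eqref{eq:energyf} as one half the norm squared of the hyperK\"ahler moment map and to imitate the finite-dimensional identity $\grad\tfrac12\Norm{\mu}{}^2=JX_\mu$, now summed over the three structures $i=1,2,3$. Concretely, I would first differentiate \eqref{eq:energyf} to obtain
\[
d\sE(f)\fhat=\sum_{i=1}^3\INNER{\mu_i(f)}{d\mu_i(f)\fhat}_{L^2},
\]
and then feed the moment map property of Proposition~\ref{prop:moment} back into this expression, taking as the Lie-algebra test element the value $H_i=\mu_i(f)$ of the moment map itself.

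The second step is the heart of the matter. By the identity established in the proof of Proposition~\ref{prop:moment}, for every mean-zero $H$ one has $\INNER{d\mu_i(f)\fhat}{H}_{L^2}=\Om_i(-df\circ X_H,\fhat)$; setting $H=H_i$ and summing over $i$ turns the previous display into $d\sE(f)\fhat=\sum_i\int_S\om_i(-df\circ X_{H_i},\fhat)\,\dvol_\sigma$. Third, I would convert each symplectic pairing into the $L^2$ metric pairing using compatibility of the triple $(\om_i,J_i,\inner{\cdot}{\cdot})$: since $\inner{u}{v}=\om_i(u,J_iv)$ and $\om_i$ is $J_i$-invariant, one gets the pointwise identity $\om_i(w,\fhat)=\inner{J_iw}{\fhat}$. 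Hence $\grad\sE=-\sum_iJ_i\,df\circ X_{H_i}$, and the negative gradient flow reads $\p_tf=-\grad\sE=\sum_iJ_i\,df\circ X_{H_i}$, which is exactly \eqref{eq:gradientf}. Reading the chain of equalities backwards yields the converse implication.

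It is worth noting that no projection onto a constraint is required: because $\sF$ is $C^1$-open in the space of smooth maps $S\to M$, its tangent space is the full $\Om^0(S,f^*TM)$, so the vector field just computed is automatically tangent to $\sF$ and genuinely represents the gradient. For the same reason the cohomological condition defining $\sF$ is preserved along the flow, since $\tfrac{d}{dt}f_t^*\om_i=d\alpha_i$ is exact.

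I do not expect a serious analytic obstacle here; the only real work is careful bookkeeping of signs and conventions. The three places to be watchful are the minus sign arising from the \emph{contravariant} action $\phi\mapsto f\circ\phi^{-1}$ (already built into the infinitesimal action $f\mapsto-df\circ X_H$), the skew-adjointness $J_i^*=-J_i$ with respect to $\inner{\cdot}{\cdot}$, and the $J_i$-invariance of $\om_i$. Keeping these consistent is precisely what converts the abstract formula $\grad\tfrac12\Norm{\mu}{}^2=JX_\mu$ into the exact sign appearing in \eqref{eq:gradientf}.
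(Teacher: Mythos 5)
Your proposal is correct and follows essentially the same route as the paper: differentiate $\sE=\tfrac12\sum_i\Norm{\mu_i}^2$, insert the moment map identity from Proposition~\ref{prop:moment} with $H=H_i=\mu_i(f)$, and convert $\Om_i(-df\circ X_{H_i},\fhat)$ into the $L^2$ pairing via $\om_i=\inner{J_i\cdot}{\cdot}$ to read off $\grad\sE(f)=-\sum_iJ_i\,df\circ X_{H_i}$. Your additional remarks on tangency and preservation of the exactness condition are correct but not needed beyond what the paper's argument already uses.
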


\begin{proof}
The differential of the energy functional $\sE:\sF\to\R$
at $f\in\sF$ in the direction $\fhat\in T_f\sF$ is given by
\begin{eqnarray*}
\delta\sE(f)\fhat 
&=& 
\sum_{i=1}^3\INNER{d\mu_i(f)\fhat}{\mu_i(f)}_{L^2} \\
&=& 
\sum_{i=1}^3\Om_i(-df\circ X_{H_i},\fhat) \\
&=& 
-\sum_{i=1}^3\INNER{J_idf\circ X_{H_i}}{\fhat}_{L^2}.
\end{eqnarray*}
Here $H_i:=\mu_i(f)\in\Om^0(S)$ is as in~\eqref{eq:energyf},
the second equation follows from Proposition~\ref{prop:moment},
and the third equation follows from the fact that
$\om_i=\inner{J_i\cdot}{\cdot}$. Hence the $L^2$ gradient 
of $\sE$ is given by
\begin{equation}\label{eq:gradEf}
\grad\sE(f) = -\sum_{i=1}^3J_idf\circ X_{H_i}
\end{equation}
and this proves Proposition~\ref{prop:gradientf}.
\end{proof}

The energy functional~\eqref{eq:energyf} and the $L^2$
metric on $\sF$ are invariant under the action of the 
full group $\sG$ of all symplectomorphisms of $(S,\sigma)$
and so is the negative gradient flow~\eqref{eq:gradientf}. 
To eliminate the action of the infinite-dimensional 
symplectomorphism group it is convenient to replace 
the solutions $t\mapsto f_t$ of equation~\eqref{eq:gradientf}
by paths of symplectic forms $t\mapsto\rho_t$ on $M$
obtained by pushing forward the symplectic form $\sigma$ on $S$
by the diffeomorphisms $f_t:S\to M$. 

\begin{proposition}[{\bf Pushforward Gradient Flow}]\label{prop:frho}
Let $\R\to\sF:t\mapsto f_t$ be a solution of~\eqref{eq:gradientf}
and define the symplectic form $\rho_t\in\Om^2(M)$ by 
$$
\rho_t := (f_t^{-1})^*\sigma
$$
for $t\in\R$. Then $\rho_t$ is cohomologous to $\om_1$ for all $t$
and the path $t\mapsto\rho_t$ satisfies the partial differential
equation
\begin{equation}\label{eq:gradientrho}
\p_t\rho_t = -\sum_{i=1}^3 d(dK_i^{\rho_t}\circ J_i^{\rho_t}),\quad 
K_i^{\rho_t} := \frac{\om_i\wedge\rho_t}{\dvol_{\rho_t}},\quad
\rho_t(J_i^{\rho_t}\cdot,\cdot) := \rho_t(\cdot,J_i\cdot).
\end{equation}
\end{proposition}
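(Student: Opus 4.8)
The plan is to push the flow equation~\eqref{eq:gradientf} forward under the diffeomorphisms $f_t$ and to track how each ingredient -- the volume form, the Hamiltonians $H_{it}$, and the complex structures $J_i$ -- transforms along the way. Throughout I write $g_t:=f_t^{-1}:M\to S$, so that $\rho_t=g_t^*\sigma$. For the cohomology assertion, recall that $f_t\in\sF$ means $f_t^*\om_1-\sigma$ is exact; applying $g_t^*=(f_t^{-1})^*$ gives $\om_1-\rho_t=g_t^*(f_t^*\om_1-\sigma)$, which is again exact, whence $[\rho_t]=[\om_1]$ for every $t$. (That the flow preserves $\sF$ follows from the same Lie derivative computation used below, since $\p_t(f_t^*\om_1)=d(f_t^*\iota_{\xi_t}\om_1)$ is exact.)

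The first main step is the kinematic identity
\begin{equation*}
\p_t\rho_t = -d\iota_{\xi_t}\rho_t,\qquad \xi_t := \p_tf_t\circ f_t^{-1}\in\Vect(M).
\end{equation*}
I would deduce this from the Lie-derivative formula for a path of diffeomorphisms, $\p_t(g_t^*\sigma)=g_t^*\cL_{W_t}\sigma$ with $W_t:=\p_tg_t\circ g_t^{-1}\in\Vect(S)$, combined with $d\sigma=0$ so that $\cL_{W_t}\sigma=d\iota_{W_t}\sigma$. Differentiating $f_t\circ g_t=\id$ yields $W_t=-(g_t)_*\xi_t$, and naturality of the Lie derivative under a diffeomorphism then gives $g_t^*\cL_{W_t}\sigma=-\cL_{\xi_t}\rho_t=-d\iota_{\xi_t}\rho_t$.

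The second step inserts the flow equation. Writing $\xi_t=\sum_i\eta_{it}$ with $\eta_{it}:=(J_i\,df_t\circ X_{H_{it}})\circ g_t$, it remains to establish the pointwise identity
\begin{equation*}
\iota_{\eta_{it}}\rho_t = dK_i^{\rho_t}\circ J_i^{\rho_t}.
\end{equation*}
Two transport facts enter. First, since $H_{it}\,\dvol_\sigma=f_t^*\om_i\wedge\sigma$, pulling back by $g_t$ gives $(H_{it}\circ g_t)\,\dvol_{\rho_t}=\om_i\wedge\rho_t$, so the pushed-forward Hamiltonian is exactly $K_i^{\rho_t}$ and $dK_i^{\rho_t}=g_t^*dH_{it}$. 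Second, the conjugated complex structure $\tilde J_i:=f_t^*J_i=dg_t\circ J_i\circ df_t$ on $S$ and the endomorphism $\hat J_i:=(g_t)_*J_i^{\rho_t}$ satisfy the $\sigma$-adjointness relation $\sigma(\hat J_i a,b)=\sigma(a,\tilde J_i b)$, which is precisely the defining relation $\rho_t(J_i^{\rho_t}\cdot,\cdot)=\rho_t(\cdot,J_i\cdot)$ rewritten through $\rho_t=g_t^*\sigma$. Combining these with $\iota_{X_{H_{it}}}\sigma=dH_{it}$ produces the identity, and summing over $i$ and invoking the kinematic formula yields~\eqref{eq:gradientrho}.

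I expect the bookkeeping of which object lives on $S$ and which on $M$ -- in particular verifying the $\sigma$-adjointness of $\tilde J_i$ and $\hat J_i$ and that it reproduces exactly the factor $J_i^{\rho_t}$ with the correct sign -- to be the only genuine obstacle; the remainder is the standard naturality of Lie derivatives, interior products, and pullbacks under diffeomorphisms, applied carefully to a path of maps between the two diffeomorphic manifolds $S$ and $M$.
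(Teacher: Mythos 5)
Your proposal is correct and follows essentially the same route as the paper: the paper differentiates $f_t^*\rho_t=\sigma$ via Cartan's formula and computes the $1$-form $\beta_t=f_t^*(\iota(\xi_t)\rho_t)$ on $S$ using the adjoint relation defining $J_i^{\rho_t}$, the identity $f_t^*\rho_t=\sigma$, and $H_{it}=f_t^*K_i^{\rho_t}$, which is exactly your chain of identities transported to $M$. The only difference is bookkeeping — you phrase the kinematic step as $\p_t\rho_t=-d\iota(\xi_t)\rho_t$ on $M$ and verify the adjointness there, while the paper carries out the identical computation on $S$ and pushes forward at the end.
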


\begin{proof}
Differentiate the equation $f_t^*\rho_t=\sigma$ 
using Cartan's formula to obtain
\begin{equation}\label{eq:beta}
0 = f_t^*\p_t\rho_t + d\beta_t,\qquad 
\beta_t := \rho_t(\p_tf_t,df_t\cdot)\in\Om^1(S).
\end{equation}
Since $f_t$ satisfies~\eqref{eq:gradientf}
it follows that
\begin{eqnarray*}
\beta_t 
&=& 
\sum_{i=1}^3\rho_t(J_idf_t\circ X_{H_{it}},df_t\cdot) \\
&=& 
\sum_{i=1}^3\rho_t(df_t\circ X_{H_{it}},J_i^{\rho_t}df_t\cdot) \\
&=& 
\sum_{i=1}^3\sigma(X_{H_{it}},f_t^*J_i^{\rho_t}\cdot) \\
&=& 
\sum_{i=1}^3dH_{it}\circ f_t^*J_i^{\rho_t} \\
&=&
\sum_{i=1}^3f_t^*\bigl(dK_i^{\rho_t}\circ J_i^{\rho_t}\bigr).
\end{eqnarray*}
Here the last equation follows from the fact that
$H_{it}=K_i^{\rho_t}\circ f_t=f_t^*K_i^{\rho_t}$. Now insert 
the formula $\beta_t=\sum_if_t^*(dK_i^{\rho_t}\circ J_i^{\rho_t})$ 
into equation~\eqref{eq:beta} to obtain~\eqref{eq:gradientrho}.  
This proves Proposition~\ref{prop:frho}.
\end{proof}

\bigbreak

Equation~\eqref{eq:gradientrho} is the 
{\bf Donaldson Geometric Flow} in the hyperK\"ahler
setting.  It can be interpreted as the gradient flow 
of the pushforward energy functional on the space $\sS_a$
of all symplectic forms on $M$ representing the cohomology
class $a:=[\om_1]$ with respect to a suitable Riemannian
metric. (See Definition~\ref{def:DONmet} below.)  The energy functional 
and the Riemannian metric on $\sS_a$ are independent of the choice 
of the symplectic four-manifold $(S,\sigma)$.

\begin{proposition}[{\bf Pushforward Energy}]\label{prop:E}
Let $f\in\sF$ and define 
$$
\rho:=(f^{-1})^*\sigma\in\Om^2(M). 
$$
Then
\begin{equation}\label{eq:E}
\sE(\rho) := \sE(f) =  \int_M\frac{2\abs{\rho^+}^2}
{\abs{\rho^+}^2-\abs{\rho^-}^2}\,\dvol, 
\end{equation}
where $\rho^\pm:=\tfrac12(\rho\pm*\rho)$ are the 
self-dual and anti-self-dual parts of $\rho$ 
and $\dvol:=\tfrac12\om_i\wedge\om_i$ 
is the volume form of the hyperK\"ahler metric.
\end{proposition}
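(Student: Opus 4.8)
The plan is to reduce the identity to a pointwise computation in the exterior algebra of $\Lambda^2T^*M$ by transporting the integral over $S$ to an integral over $M$ through the diffeomorphism $f$. The only global input is the change-of-variables formula for $f$; everything else is fibrewise and uses the hyperK\"ahler structure.

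First I would rewrite the integrand of $\sE(f)$ as a pullback under $f$. The defining relation $\rho=(f^{-1})^*\sigma$ is equivalent to $f^*\rho=\sigma$, hence $f^*\dvol_\rho=\dvol_\sigma$ where $\dvol_\rho:=\tfrac12\rho\wedge\rho$, and
\[
H_i=\frac{f^*\om_i\wedge\sigma}{\dvol_\sigma}
=\frac{f^*(\om_i\wedge\rho)}{f^*\dvol_\rho}=f^*K_i^\rho,
\qquad K_i^\rho:=\frac{\om_i\wedge\rho}{\dvol_\rho},
\]
the very functions appearing in Proposition~\ref{prop:frho}. Since each $H_i$ is a function and $\dvol_\sigma=f^*\dvol_\rho$, the integrand is the $f$-pullback of a $4$-form on $M$, so
\[
\sE(f)=\frac12\int_S f^*\Bigl(\sum_{i=1}^3 (K_i^\rho)^2\,\dvol_\rho\Bigr)
=\frac12\int_M\sum_{i=1}^3 (K_i^\rho)^2\,\dvol_\rho,
\]
and it remains to identify the pointwise $4$-form $\tfrac12\sum_i(K_i^\rho)^2\dvol_\rho$ with $\tfrac{2\abs{\rho^+}^2}{\abs{\rho^+}^2-\abs{\rho^-}^2}\dvol$.

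For the fibrewise step I would invoke the hyperK\"ahler structure: at each point $\om_1,\om_2,\om_3$ span the self-dual subbundle $\Lambda^+$ orthogonally with $\om_i\wedge\om_j=2\delta_{ij}\dvol$, so $\abs{\om_i}^2=2$ and $\{\om_i/\sqrt2\}$ is an orthonormal basis of $\Lambda^+$. Writing $\rho=\rho^++\rho^-$ and using $\alpha\wedge\beta=\inner{\alpha}{*\beta}\dvol$ together with the self-duality of $\om_i$ gives $\om_i\wedge\rho=\inner{\om_i}{\rho^+}\dvol$, while $\rho^+\wedge\rho^-=0$ yields $\rho\wedge\rho=(\abs{\rho^+}^2-\abs{\rho^-}^2)\dvol$, i.e.\ $\dvol_\rho=u\dvol$ with $u:=\tfrac12(\abs{\rho^+}^2-\abs{\rho^-}^2)$. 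The key identity is the Pythagoras relation $\sum_i\inner{\om_i}{\rho^+}^2=2\abs{\rho^+}^2$ coming from orthonormality of $\{\om_i/\sqrt2\}$; it gives $K_i^\rho=\inner{\om_i}{\rho^+}/u$ and $\sum_i(K_i^\rho)^2=2\abs{\rho^+}^2/u^2$, whence $\tfrac12\sum_i(K_i^\rho)^2\dvol_\rho=\tfrac{\abs{\rho^+}^2}{u}\dvol$, which is the asserted density.

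The main obstacle is precisely this fibrewise computation, and within it the identity $\sum_i\inner{\om_i}{\rho^+}^2=2\abs{\rho^+}^2$: it is exactly here that the hyperK\"ahler hypothesis enters, through the fact that the three forms span $\Lambda^+$ orthogonally. The remaining care is to check that $f$ is orientation-preserving, so that the change of variables carries no sign: since $f^*\om_1$ and $\sigma$ are cohomologous and symplectic, $\deg(f)\int_M\om_1\wedge\om_1=\int_S\sigma\wedge\sigma>0$, forcing $\deg(f)=1$; equivalently $u>0$ because $\rho$ is symplectic and cohomologous to $\om_1$. The rest is routine bookkeeping with the self-dual/anti-self-dual splitting.
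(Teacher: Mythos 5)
Your proposal is correct and takes essentially the same route as the paper's proof: you identify $H_i=f^*K_i^\rho$, transport the integral to $M$ by change of variables, and verify the pointwise identities $2\abs{\rho^+}^2=u^2\sum_i\abs{K_i^\rho}^2$ and $\abs{\rho^+}^2-\abs{\rho^-}^2=2u$, which are exactly the paper's equation~\eqref{eq:urho}. The only difference is one of exposition: you derive these identities explicitly from the orthonormality of $\{\om_i/\sqrt2\}$ in $\Lambda^+$ and also check that $f$ is orientation-preserving, both of which the paper leaves implicit.
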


\begin{proof}
Define 
\begin{equation}\label{eq:u}
u :=\frac{\dvol_\rho}{\dvol},\qquad 
\dvol_\rho := \frac{\rho\wedge\rho}{2},
\end{equation}
and
$$
K_i^\rho:=\frac{\om_i\wedge\rho}{\dvol_\rho},\qquad
H_i:=f^*K_i^\rho=\frac{f^*\om_i\wedge\sigma}{\dvol_\sigma}
$$ 
as in~\eqref{eq:gradientrho} and~\eqref{eq:energyf}. Then 
\begin{equation}\label{eq:urho}
\rho^+=\frac{u}{2}\sum_iK^\rho_i\om_i,\qquad
2\abs{\rho^+}^2=u^2\sum_i\abs{K^\rho_i}^2,\qquad
\abs{\rho^+}^2-\abs{\rho^-}^2=2u.
\end{equation}
Divide and integrate to obtain 
\begin{eqnarray*}
\int_M\frac{2\abs{\rho^+}^2}{\abs{\rho^+}^2-\abs{\rho^-}^2}\dvol
&=& 
\int_M\frac{u}{2}\sum_i\abs{K^\rho_i}^2\,\dvol 
=  
\int_M\frac{1}{2}\sum_i\abs{K^\rho_i}^2\,\dvol_\rho \\
&=&
\frac12\sum_{i=1}^3\int_S\abs{H_i}^2\,\dvol_\sigma 
=
\sE(f).
\end{eqnarray*}
This proves Proposition~\ref{prop:E}.
\end{proof}

The energy functional $\sE:\sS_a\to\R$ in~\eqref{eq:E} 
is well defined for symplectic forms on any closed oriented 
Riemannian four-manifold $M$. Moreover, the space 
$\sS_a$ carries a natural Riemannian metric which in the hyperK\"ahler
case agrees with the pushforward of the $L^2$ metric on $\sF$ .
Thus the Donaldson geometric flow extends to the general 
setting as explained in the next section. 

%%%%%%%%%%%%%%%%%%%%%%%%%%%%%%%%%%%%%%%%%%%%%%%%%%
%%%%%%%%%%%%%%%%%%%%%%%%%%%%%%%%%%%%%%%%%%%%%%%%%%
%%%%%%%%%%%%%%%%%%%%%%%%%%%%%%%%%%%%%%%%%%%%%%%%%%
%%%%%%%%%%%%%%%%%%%%%%  SECTION 3  %%%%%%%%%%%%%%%
%%%%%%%%%%%%%%%%%%%%%%%%%%%%%%%%%%%%%%%%%%%%%%%%%%
%%%%%%%%%%%%%%%%%%%%%%%%%%%%%%%%%%%%%%%%%%%%%%%%%%
%%%%%%%%%%%%%%%%%%%%%%%%%%%%%%%%%%%%%%%%%%%%%%%%%%

\section{General Symplectic Four-Manifolds}\label{sec:GENERAL}

Let $M$ be a closed oriented Riemannian four-manifold.
Denote by $g$ the Riemannian metric on $M$, 
denote by ${\dvol\in\Om^4(M)}$ the volume form of $g$,
and let $*:\Om^k(M)\to\Om^{4-k}(M)$ be the 
Hodge $*$-operator associated to the metric and orientation.
Fix a cohomology class $a\in H^2(M;\R)$ 
such that $a^2>0$ and consider the space
$$
\sS_a:=\left\{\rho\in\Om^2(M)\,\big|\,
d\rho=0,\,\rho\wedge\rho>0,\,[\rho]=a\right\}
$$
of symplectic forms on $M$ representing the class $a$.  
This is an infinite-dimensional manifold and the 
tangent space of~$\sS_a$ at any element~${\rho\in\sS_a}$ 
is the space of exact $2$-forms on $M$.  
The next proposition is of preparatory nature. 
It summarizes the properties of a family of 
Riemannian metrics $g^\rho$ on~$M$, one for each 
nondegenerate $2$-form $\rho$ (and for each fixed 
background metric $g$). These Riemannian metrics will play a 
central role in our study of the Donaldson geometric flow.

\begin{proposition}[{\bf Symplectic Forms and Riemannian Metrics}]
\label{prop:grho}\ 

\noindent
Fix a nondegenerate $2$-form $\rho\in\Om^2(M)$
such that $\rho\wedge\rho>0$ and define the 
function $u:M\to(0,\infty)$ by~\eqref{eq:u}.
Then there exists a unique Riemannian metric 
$g^\rho$ on $M$ that satisfies the following conditions.

\smallskip\noindent{\bf (i)}
The volume form of $g^\rho$ agrees with the volume form of $g$.

\smallskip\noindent{\bf (ii)}
The Hodge $*$-operator $*^\rho:\Om^1(M)\to\Om^3(M)$
associated to $g^\rho$ is given
by 
\begin{equation}\label{eq:*rhola}
*^\rho\lambda = \frac{\rho\wedge*(\rho\wedge\lambda)}{u}
\end{equation}
for $\lambda\in\Om^1(M)$ and by
$*^\rho\iota(X)\rho = -\rho\wedge g(X,\cdot)$
for $X\in\Vect(M)$.

\smallskip\noindent{\bf (iii)}
The Hodge $*$-operator $*^\rho:\Om^2(M)\to\Om^2(M)$
associated to $g^\rho$ is given by 
\begin{equation}\label{eq:*rhom}
*^\rho\om = R^\rho*R^\rho\om,\qquad 
R^\rho\om := \om-\frac{\om\wedge\rho}{\dvol_\rho}\rho,
\end{equation}
for $\om\in\Om^2(M)$. The linear map $R^\rho:\Om^2(M)\to\Om^2(M)$ 
is an involution that preserves the exterior product, 
acts as the identity on the orthogonal complement
of $\rho$ with respect to the exterior product, 
and sends $\rho$ to $-\rho$. 

\smallskip\noindent{\bf (iv)}
Let $\om\in\Om^2(M)$ be a nondegenerate $2$-form and let
$J:TM\to TM$ be an almost complex structure such that
$g = \om(\cdot,J\cdot)$.  Define the almost complex 
structure $J^\rho$ by $\rho(J^\rho\cdot,\cdot):=\rho(\cdot,J\cdot)$
and define the $2$-form $\om^\rho\in\Om^2(M)$ by $\om^\rho:=R^\rho\om$.
Then $g^\rho = \om^\rho(\cdot,J^\rho\cdot)$ and so
$\om^\rho$ is self-dual with respect to $g^\rho$.
\end{proposition}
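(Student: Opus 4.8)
First I would establish the algebraic assertions about $R^\rho$ in~(iii) by a pointwise computation. Writing every $4$-form as a function multiple of $\dvol_\rho$ and using $\rho\wedge\rho=2\dvol_\rho$, one reads off $R^\rho\rho=-\rho$ and $R^\rho\om=\om$ whenever $\om\wedge\rho=0$; expanding $R^\rho\alpha\wedge R^\rho\beta$ the cross terms cancel, so $R^\rho$ preserves the exterior product, and the case $\alpha=\beta$ of the resulting identity gives $(R^\rho)^2=\id$. Thus $R^\rho$ is a fiberwise isomorphism preserving the exterior-product pairing, so if $\Lambda^+_g$ denotes the self-dual bundle of $g$ then $R^\rho(\Lambda^+_g)$ is again a positive rank-$3$ subbundle. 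The existence and uniqueness of $g^\rho$, property~(i), and the identity $\Lambda^+_{g^\rho}=R^\rho(\Lambda^+_g)$ are now the content of Theorem~\ref{thm:FOUR}; since its proof rests on the pointwise principle of Theorem~\ref{thm:DONfour} (a metric on an oriented four-manifold is determined by its volume form and its positive rank-$3$ self-dual bundle $\Lambda^+\subset\Lambda^2T^*M$), closedness of $\rho$ is irrelevant and only $\rho\wedge\rho>0$, i.e.\ $u>0$, is used.

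For the Hodge-star formula~\eqref{eq:*rhom} set $T:=R^\rho*R^\rho$ on $\Om^2$. From $*^2=\id$ on $2$-forms and $(R^\rho)^2=\id$ one gets $T^2=\id$, while $T\om=\om$ is equivalent to $R^\rho\om\in\Lambda^+_g$, i.e.\ to $\om\in R^\rho(\Lambda^+_g)=\Lambda^+_{g^\rho}$. Hence $T$ is an involution of $\Om^2$ with the same $+1$-eigenbundle as $*^\rho$, and since an involution of $\Om^2$ is determined by its $+1$-eigenbundle we conclude $*^\rho=T=R^\rho*R^\rho$.

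For~(iv) I would first check that $J^\rho$, defined by $\rho(J^\rho\cdot,\cdot)=\rho(\cdot,J\cdot)$, satisfies $(J^\rho)^2=-\id$ (pair against $\rho$ and use $J^2=-\id$ and nondegeneracy of $\rho$), so it is an almost complex structure, and that $\om^\rho=R^\rho\om$ lies in $R^\rho(\Lambda^+_g)=\Lambda^+_{g^\rho}$ because the compatible form $\om$ is $g$-self-dual; hence $\om^\rho$ is $g^\rho$-self-dual. To identify $g^\rho$ with $\om^\rho(\cdot,J^\rho\cdot)$ I would verify that the latter is symmetric and positive definite with volume form $\dvol$ and self-dual bundle $R^\rho(\Lambda^+_g)$, and then quote the uniqueness in Theorem~\ref{thm:DONfour}. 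For~\eqref{eq:*rhola}, the identity $*^\rho\iota(X)\rho=-\rho\wedge g(X,\cdot)$ is the special case $\lambda=\iota(X)\rho$: indeed $\rho\wedge\iota(X)\rho=\iota(X)\dvol_\rho=u\,\iota(X)\dvol=u*g(X,\cdot)$, and $**=-\id$ on $1$-forms gives $*(\rho\wedge\iota(X)\rho)=-u\,g(X,\cdot)$, whence $*^\rho\iota(X)\rho=u^{-1}\rho\wedge(-u\,g(X,\cdot))=-\rho\wedge g(X,\cdot)$. It therefore remains to prove $*^\rho\lambda=u^{-1}\rho\wedge*(\rho\wedge\lambda)$ for $\lambda\in\Om^1$, which I would obtain either by computing the Hodge star on $1$-forms directly from the explicit almost-Hermitian data $(\om^\rho,J^\rho,g^\rho)$ of~(iv), or by checking that $\lambda\mapsto u^{-1}\rho\wedge*(\rho\wedge\lambda)$ is the $1$-form Hodge star of a metric with volume $\dvol$ and self-dual bundle $R^\rho(\Lambda^+_g)$ and invoking uniqueness once more.

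The main obstacle is the fiberwise linear algebra in~(iv): showing that $\om^\rho(\cdot,J^\rho\cdot)$ is symmetric and positive definite and has the prescribed volume form. This is where one must control how the involution $R^\rho$ and the conformal factor $u$ interact with a compatible triple, and it is cleanest to carry out in a pointwise frame adapted simultaneously to $g$ and to a normal form of $\rho$. Once~(iv) is secured, the remaining claims follow from the uniqueness principle and the elementary identities above.
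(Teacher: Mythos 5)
Your overall architecture---reduce everything to pointwise linear algebra, construct $g^\rho$ from the data $(\dvol,R^\rho\Lambda^+_g)$ via Theorem~\ref{thm:DONfour}, and recover the remaining clauses by uniqueness---is reasonable and close in spirit to the paper, whose proof of this proposition is literally the citation ``See Theorem~\ref{thm:FOUR}''. But two of your steps do not hold up as written. First, your identification $*^\rho=T:=R^\rho*R^\rho$ invokes the principle that an involution of $\Om^2(M)$ is determined by its $+1$-eigenbundle. That principle is false: on $\R^2$ the maps $(x,y)\mapsto(x,-y)$ and $(x,y)\mapsto(x+2y,-y)$ are both involutions with the same $+1$-eigenspace. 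You must match the $-1$-eigenbundles as well. The repair is short and is exactly the step (v)$\Rightarrow$(vi) in the paper's proof of Theorem~\ref{thm:FOUR}: since $R^\rho$ preserves the exterior pairing and $\Lambda^-$ is the wedge-annihilator of $\Lambda^+$, one gets $R^\rho(\Lambda^-_g)=\Lambda^-_{g^\rho}$, so $T$ and $*^\rho$ agree on both eigenbundles and hence are equal.

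Second, and more seriously, the substance of the proposition---clause~(iv) and the formula~\eqref{eq:*rhola}---is precisely what you defer as ``the main obstacle'', with only a strategy sketch. (Your special-case computation deriving $*^\rho\iota(X)\rho=-\rho\wedge g(X,\cdot)$ from~\eqref{eq:*rhola} is fine, but it presupposes~\eqref{eq:*rhola}.) This deferred step is not routine from your starting data: from the $2$-form information $(\dvol,R^\rho\Lambda^+_g)$ alone one cannot read off the Hodge star on $1$-forms or the compatible triple without reconstructing the metric, which is why your route stalls exactly there. The paper runs the implications in the opposite, easier direction: Theorem~\ref{thm:FOUR} is anchored at the explicit formula $g^\rho(v,w)=u^{-1}g(Av,Aw)$ with $g(A\cdot,\cdot)=\rho$; Lemma~\ref{le:det} ($\det A=u^2$) gives the volume form, Lemma~\ref{le:ivdvol} gives~\eqref{eq:*rhola} and the contraction formula, and Lemma~\ref{le:gomJ} then converts the $1$-form star identity into clause~(iv) without ever checking symmetry or positivity of $\om^\rho(\cdot,J^\rho\cdot)$ by hand; the $2$-form statements (v),(vi) come last. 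So either you permit yourself to cite Theorem~\ref{thm:FOUR} in full---as you partly do, and as the paper does---in which case all four clauses and uniqueness follow at once and your re-derivations are redundant; or you do not, in which case the proposal is incomplete at its hardest point, and the missing tool is the chain (ii)$\Rightarrow$(iii)$\Rightarrow$(iv) through Lemmas~\ref{le:ivdvol} and~\ref{le:gomJ}.
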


\begin{proof}
See Theorem~\ref{thm:FOUR}.
\end{proof}

\bigbreak

\begin{definition}\label{def:DONmet}
Each nondegenerate $2$-form $\rho\in\Om^2(M)$ with $\rho^2>0$
determines an inner product $\inner{\cdot}{\cdot}_\rho$ on the space 
of exact $2$-forms defined by
\begin{equation}\label{eq:innerho}
\INNER{\rhohat_1}{\rhohat_2}_\rho
:= \int_M\lambda_1\wedge *^\rho\lambda_2,\qquad
d\lambda_i=\rhohat_i,\qquad *^\rho\lambda_i\in\im\,d.
\end{equation}
These inner products determine a Riemannian metric on the
infinite-di\-men\-si\-onal manifold $\sS_a$ called 
the {\bf Donaldson metric}.
\end{definition}

The Donaldson geometric flow on a general symplectic four-manifold
is the negative gradient flow of the energy functional $\sE:\sS_a\to\R$
in~\eqref{eq:E} with respect to the Donaldson metric 
in Definition~\ref{def:DONmet}. A central geometric 
ingredient in this flow is the following map 
$\Theta:\Om^2_{\ndg}(M)\to\Om^2(M)$.
Its domain is the space 
$\Om^2_{\ndg}(M) := \{\rho\in\Om^2(M)\,|\,\rho\wedge\rho>0\}$
of nondegenerate $2$-forms compatible with the
orientation and the map is given by
\begin{equation}\label{eq:Theta}
\Theta(\rho) := \Theta^\rho := *\frac{\rho}{u}
- \frac{1}{2}\Abs{\frac{\rho}{u}}^2\rho,\qquad
u := \frac{\dvol_\rho}{\dvol}.
\end{equation}

\begin{proposition}[{\bf The Map $\Theta$}]\label{prop:Theta}
Let $\rho\in\Om^2_\ndg(M)$ and define $u\in\Om^0(M)$
and $\Theta^\rho\in\Om^2(M)$ by~\eqref{eq:Theta}.
Then the following holds.

\smallskip\noindent{\bf (i)}
$\Theta^\rho$ is the pointwise orthogonal projection of the 
$2$-form $u^{-1}*\rho$ onto the orthogonal complement 
of $\rho$ with respect to the wedge product. In particular
\begin{equation}\label{eq:Theta0}
\Theta^\rho\wedge\rho = 0.
\end{equation}

\smallskip\noindent{\bf (ii)}
The $2$-form $\Theta^\rho$ can be written as
\begin{equation}\label{eq:Theta1}
\Theta^\rho 
= \frac{2\rho^+}{u} -\Abs{\frac{\rho^+}{u}}^2\rho
= -\frac{\abs{\rho^-}^2\rho^++\abs{\rho^+}^2\rho^-}{u^2}.
\end{equation}

\smallskip\noindent{\bf (iii)}
The square of $\Theta^\rho$ is given by
\begin{equation}\label{eq:Theta2}
\Theta^\rho\wedge\Theta^\rho 
= -\frac{2\abs{\rho^+}^2\abs{\rho^-}^2}{u^3}\dvol.
\end{equation}
Thus $\Theta^\rho\wedge\Theta^\rho\le0$ 
with equality if and only if $\rho$ is self-dual.

\smallskip\noindent{\bf (iv)}
Let $\rho_t : \R \to \Om^2_{\ndg}(M)$ be a smooth path 
with $\rho_0 = \rho$ and ${\p_t\rho_t|_{t=0} = \rhohat}$. 
Then
\begin{equation}\label{eq:Theta3}
\Thetahat 
:= \left.\frac{d}{dt}\right|_{t=0} \Theta^{\rho_t} 
= \frac{\rhohat + *^{\rho}\rhohat}{u} 
- \Abs{\frac{\rho^+}{u}}^2 \rhohat.
\end{equation}

\smallskip\noindent{\bf (v)}
Assume the hyperK\"ahler case.  Then 
\begin{equation}\label{eq:Theta4}
\Theta^\rho 
= \sum_{i=1}^3\left(K^\rho_i\om_i - \frac12(K^\rho_i)^2\rho\right),\qquad
K^\rho_i := \frac{\om_i\wedge\rho}{\dvol_\rho},
\end{equation}
and
\begin{equation}\label{eq:Theta5}
d\Theta^\rho = *^\rho\sum_{i=1}^3dK^\rho_i\circ J_i^\rho,\qquad
\rho(J^\rho_i\cdot,\cdot) := \rho(\cdot,J_i\cdot).  
\end{equation}
\end{proposition}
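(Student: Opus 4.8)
The plan is to reduce the entire statement to pointwise linear algebra in the self-dual/anti-self-dual splitting of $\Om^2(M)$ determined by the background metric $g$, and then feed the resulting identities into the definition~\eqref{eq:Theta}. Throughout I write $\rho=\rho^++\rho^-$, use that $**=\id$ on $2$-forms so that $*\rho^\pm=\pm\rho^\pm$, and record the pointwise identities $\rho^+\wedge\rho^-=0$ and $\rho^\pm\wedge\rho^\pm=\pm\abs{\rho^\pm}^2\dvol$, together with their consequences $\rho\wedge\rho=(\abs{\rho^+}^2-\abs{\rho^-}^2)\dvol$ and $\abs{\rho}^2=\abs{\rho^+}^2+\abs{\rho^-}^2$; in particular $2u=\abs{\rho^+}^2-\abs{\rho^-}^2$, which is~\eqref{eq:urho}. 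These follow from $\alpha\wedge\beta=\inner{\alpha}{*\beta}\dvol$ and the $g$-orthogonality of the $\pm1$ eigenspaces of $*$.

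For (i) I would argue that~\eqref{eq:Theta} is by construction the pointwise $g$-wedge-orthogonal projection of $u^{-1}*\rho$ onto $\{\rho\}^\perp$. Indeed the projection of a $2$-form $\alpha$ onto the wedge-orthogonal complement of $\rho$ is $\alpha-\tfrac{\alpha\wedge\rho}{\rho\wedge\rho}\rho$ (well defined since $\rho\wedge\rho=2u\,\dvol\neq0$), and substituting $\alpha=u^{-1}*\rho$ together with $*\rho\wedge\rho=\abs{\rho}^2\dvol$ reproduces $\Theta^\rho$ exactly; being a projection onto $\{\rho\}^\perp$ it satisfies~\eqref{eq:Theta0} automatically. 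Part (ii) is then a direct expansion: inserting $*\rho=\rho^+-\rho^-$, $\abs{\rho}^2=\abs{\rho^+}^2+\abs{\rho^-}^2$ and $2u=\abs{\rho^+}^2-\abs{\rho^-}^2$ into~\eqref{eq:Theta} and collecting the coefficients of $\rho^+$ and $\rho^-$ yields the two expressions in~\eqref{eq:Theta1}. Part (iii) follows by wedging the second expression in~\eqref{eq:Theta1} with itself and using the identities above; the cross terms drop out and $\abs{\rho^-}^2-\abs{\rho^+}^2=-2u$ produces~\eqref{eq:Theta2}. The sign claim is immediate, since $u>0$ forces $\abs{\rho^+}^2>0$, so the right-hand side vanishes precisely when $\rho^-=0$.

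The computational heart is (iv), and this is where I expect the real work. I would differentiate the defining formula~\eqref{eq:Theta} directly with the fixed background $*$: writing $\uhat:=\p_t u_t|_{t=0}=\tfrac{\rhohat\wedge\rho}{\dvol}=\inner{\rhohat}{*\rho}$, one obtains $\Thetahat$ as an explicit combination of $*\rhohat$, $*\rho$, $\rho$ and $\rhohat$ with coefficients built from $u,\uhat,\abs{\rho}^2$ and $\inner{\rhohat}{\rho}$. The target expression~\eqref{eq:Theta3}, however, is phrased through the $\rho$-dependent operator $*^\rho$, so the crux is to expand $*^\rho\rhohat=R^\rho*R^\rho\rhohat$ using Proposition~\ref{prop:grho}(iii) and to verify that the two sides agree. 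I would carry this out in the $\pm$ splitting, setting $a:=\inner{\rhohat^+}{\rho^+}$ and $b:=\inner{\rhohat^-}{\rho^-}$ so that $\uhat=a-b$ and $\inner{\rhohat}{\rho}=a+b$; matching the coefficients of $\rho^+$, $\rho^-$ and $\rho$ then collapses, after substituting $2u=\abs{\rho^+}^2-\abs{\rho^-}^2$, to the single identity $(a-b)\bigl(2u-(\abs{\rho^+}^2-\abs{\rho^-}^2)\bigr)=0$. Keeping the $R^\rho$-conjugation bookkeeping and the various weights straight is the main obstacle here; everything else is routine.

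Finally, for (v) the formula~\eqref{eq:Theta4} is read off from part (ii): substituting $\rho^+=\tfrac u2\sum_iK_i^\rho\om_i$ and $\abs{\rho^+}^2=\tfrac{u^2}2\sum_i(K_i^\rho)^2$ from~\eqref{eq:urho} into the first expression in~\eqref{eq:Theta1} gives $\Theta^\rho=\sum_iK_i^\rho\om_i-\tfrac12\sum_i(K_i^\rho)^2\rho$. For~\eqref{eq:Theta5} I would differentiate~\eqref{eq:Theta4} exteriorly; using $d\om_i=0$ together with $d\rho=0$ the closed contributions drop and $d\Theta^\rho=\sum_i dK_i^\rho\wedge(\om_i-K_i^\rho\rho)=\sum_i dK_i^\rho\wedge R^\rho\om_i$, where $R^\rho\om_i=\om_i^\rho$. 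By Proposition~\ref{prop:grho}(iv) each triple $(g^\rho,J_i^\rho,\om_i^\rho)$ is compatible with $\om_i^\rho$ self-dual for $g^\rho$, so the pointwise Kähler identity $dK\wedge\om_i^\rho=*^\rho(dK\circ J_i^\rho)$, which I would check on a $g^\rho$-orthonormal adapted coframe, applies termwise; summing over $i$ gives~\eqref{eq:Theta5}.
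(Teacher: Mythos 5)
Your proposal is correct and follows essentially the same route as the paper: parts~(i)--(iii) via the pointwise wedge-orthogonal projection and the identities $*\rho=\rho^+-\rho^-$, $2u=\abs{\rho^+}^2-\abs{\rho^-}^2$; part~(iv) by differentiating~\eqref{eq:Theta} and expanding $*^\rho\rhohat=R^\rho*R^\rho\rhohat$ via Proposition~\ref{prop:grho}(iii); and part~(v) by reading~\eqref{eq:Theta4} off~\eqref{eq:Theta1} and applying the compatibility identity $*^\rho(\lambda\circ J_i^\rho)=\lambda\wedge\om_i^\rho$ with $\lambda=dK_i^\rho$. The only slip is cosmetic: in~(iv) the residual term after matching is $\rhohat$ times $\bigl(2u-(\abs{\rho^+}^2-\abs{\rho^-}^2)\bigr)/2u^2$ rather than a multiple of $(a-b)$, but it vanishes by the same identity, so the argument goes through unchanged.
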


\begin{proof}
It follows directly from the definition of $\Theta^\rho$ 
in~\eqref{eq:Theta} that ${\Theta^\rho\wedge\rho = 0}$
and this proves part~(i). 

To prove part~(ii), denote by $\Pi:\Om^2(M)\to\Om^2(M)$ 
the pointwise orthogonal projection onto the orthogonal 
complement of $\rho$ with respect to the wedge product. Thus 
$$
\Pi(\tau) = \tau - \frac{\tau\wedge\rho}{\rho\wedge\rho}\rho
\qquad\mbox{for }\tau\in\Om^2(M).
$$
Since $*\rho=\rho^+-\rho^-=2\rho^+-\rho$ it follows
from part~(i) that
$$
\Theta^\rho=\Pi\left(*\frac{\rho}{u}\right) 
= \Pi\left(\frac{2\rho^+}{u}\right).
$$
This proves the first equation in~\eqref{eq:Theta1}.
The second equation in~\eqref{eq:Theta1} follows by direct calculation,
using the identity $2u=\abs{\rho^+}^2-\abs{\rho^-}^2$.
This proves part~(ii). 

To prove part~(iii), use the last term in 
equation~\eqref{eq:Theta1} to obtain
$$
\Theta^\rho\wedge\Theta^\rho 
= \frac{\abs{\rho^-}^4\rho^+\wedge\rho^++\abs{\rho^+}^4\rho^-\wedge\rho^-}{u^4}
= -\frac{2\abs{\rho^+}^2\abs{\rho^-}^2}{u^3}\dvol.
$$
This proves equation~\eqref{eq:Theta2} and part~(iii).  

To prove part~(iv) choose a smooth path $\R\to\sS_a:t\mapsto\rho_t$ 
such that $\rho_0=\rho$ and $\p_t\rho_t|_{t=0}=\rhohat$. Define
$$
u_t := \frac{\rho_t\wedge\rho_t}{2\dvol},\qquad
\uhat := \left.\frac{\p}{\p t}\right|_{t=0}u_t 
= \frac{\rho\wedge\rhohat}{\dvol},\qquad
\Thetahat := \left.\frac{\p}{\p t}\right|_{t=0}\Theta^{\rho_t}.
$$
Then, by part~(iii) of Proposition~\ref{prop:grho}, we have
$$
R^\rho\rhohat = \rhohat - \frac{\rhohat\wedge\rho}{\dvol_\rho}\rho
= \rhohat - \frac{\uhat}{u}\rho.
$$
Hence
\begin{equation}\label{eq:*rhohat}
\begin{split}
*^\rho\rhohat
&=
R^\rho*R^\rho\rhohat \\
&= 
*R^\rho\rhohat - \frac{\rho\wedge *R^\rho\rhohat}{\dvol_\rho}\rho \\
&= 
*\rhohat
- \frac{\uhat}{u}{*\rho}
- \frac{\rho\wedge *\rhohat}{\dvol_\rho}\rho
+ \frac{\abs{\rho}^2\uhat}{u^2}\rho.
\end{split}
\end{equation}
This implies
\begin{eqnarray*}
\Thetahat
&=&
\left.\frac{\p}{\p t}\right|_{t=0}\left(
\frac{*\rho_t}{u_t} -\frac12\Abs{\frac{\rho_t}{u_t}}^2\rho_t\right) \\
&=&
\frac{*\rhohat}{u} 
- \frac{\uhat}{u}\frac{*\rho}{u}
- \frac{\rho\wedge *\rhohat}{u^2\,\dvol}\rho
+ \frac{\Abs{\rho}^2\uhat}{u^3}\rho
- \frac12\Abs{\frac{\rho}{u}}^2\rhohat \\
&=&
\frac{*^\rho\rhohat}{u} 
-  \frac12\Abs{\frac{\rho}{u}}^2\rhohat \\
&=&
\frac{\rhohat+*^\rho\rhohat}{u} 
-  \Abs{\frac{\rho^+}{u}}^2\rhohat.
\end{eqnarray*}
Here the third step follows from~\eqref{eq:*rhohat} 
and the last step uses the identities
$\abs{\rho}^2=\abs{\rho^+}^2+\abs{\rho^-}^2$
and $2u=\abs{\rho^+}^2-\abs{\rho^-}^2$ 
in~\eqref{eq:urho}. This proves part~(iv).

Equation~\eqref{eq:Theta4} follows from~\eqref{eq:Theta1} 
and the identities 
$$
\frac{\rho^+}{u} = \frac12\sum_iK^\rho_i\om_i,\qquad
\Abs{\frac{\rho^+}{u}}^2 = \frac12\sum_i(K^\rho_i)^2.
$$
To prove~\eqref{eq:Theta5}, define 
$$
\om_i^\rho := \om_i-K^\rho_i\rho,\qquad i=1,2,3. 
$$
Then 
$
\om_i^\rho(\cdot,J^\rho_i\cdot)=g^\rho
$ 
by part~(iv) of Proposition~\ref{prop:grho} and hence 
$$
*^\rho(\lambda\circ J^\rho_i) = \lambda\wedge\om^\rho_i
$$
for all $\lambda\in\Om^1(M)$ and all $i$. 
Take $\lambda=dK^\rho_i$ to obtain 
$$
*^\rho(dK^\rho_i\circ J^\rho_i)
= dK^\rho_i\wedge\om^\rho_i
= d\left(K_i^\rho\om_i-\frac12(K^\rho_i)^2\rho\right).
$$
Take the sum over all $i$ and use 
equation~\eqref{eq:Theta4} to obtain~\eqref{eq:Theta5}. 
This proves part~(v) and Proposition~\ref{prop:Theta}.
\end{proof}

\begin{proposition}[{\bf The Gradient of the Energy}]\label{prop:DGF}
\ 

\noindent{\bf (i)}
The differential of the energy functional $\sE:\sS_a\to\R$
at $\rho\in\sS_a$ and its gradient with respect to the 
Donaldson metric are given by 
\begin{equation}\label{eq:dE}
\begin{split}
\delta\sE(\rho)\rhohat 
&= \int_M\Theta^\rho\wedge\rhohat, \\
\grad\sE(\rho)
&= -d*^\rho d\Theta^\rho.
\end{split}
\end{equation}

\smallskip\noindent{\bf (ii)}
Assume the hyperK\"ahler case. Then 
\begin{equation}\label{eq:dE-HK}
\begin{split}
\sE(\rho) 
%%%DAS - volume form inserted
%&= \frac12\int_M\sum_{i=1}^3\abs{K^\rho_i}^2,
&= \frac12\int_M\sum_{i=1}^3\abs{K^\rho_i}^2\dvol_\rho,
%%%DAS - end
\qquad K^\rho_i := \frac{\om_i\wedge\rho}{\dvol_\rho}, \\
\delta\sE(\rho)\rhohat 
&= \int_M\sum_{i=1}^3\left(K_i^\rho\om_i-\frac12(K^\rho_i)^2\rho\right)
\wedge\rhohat, \\
\grad\sE(\rho) 
&= \sum_{i=1}^3 d\left(dK_i^\rho\circ J_i^\rho\right),\qquad
\rho(J^\rho_i\cdot,\cdot):=\rho(\cdot,J_i\cdot).
\end{split}
\end{equation}
\end{proposition}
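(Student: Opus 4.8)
The plan is to prove part~(i) in two steps—first a direct variation of the energy to obtain the formula for $\delta\sE(\rho)$, then an identification of $-d*^\rho d\Theta^\rho$ as the Donaldson-metric gradient—and then to derive part~(ii) by substituting the hyperK\"ahler formulas~\eqref{eq:Theta4} and~\eqref{eq:Theta5} from Proposition~\ref{prop:Theta} into the general result.

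For the differential in part~(i), I would first rewrite the integrand of~\eqref{eq:E} using the identity $2u=\abs{\rho^+}^2-\abs{\rho^-}^2$ from~\eqref{eq:urho}, so that $\sE(\rho)=\int_M u^{-1}\abs{\rho^+}^2\,\dvol$. Differentiating along a path $t\mapsto\rho_t$ with $\p_t\rho_t|_{t=0}=\rhohat$, and using that the self-dual projection is independent of $t$ (the background metric is fixed), the variation of $\rho^+$ is the self-dual part $\rhohat^+$ of $\rhohat$. Everything then reduces to the two pointwise identities $\tfrac{d}{dt}\abs{\rho^+}^2\,\dvol=2\rho^+\wedge\rhohat$ and $\uhat\,\dvol=\rho\wedge\rhohat$ (the latter already recorded in the proof of Proposition~\ref{prop:Theta}(iv)). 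The variation collects into $\int_M\bigl(2u^{-1}\rho^+-u^{-2}\abs{\rho^+}^2\rho\bigr)\wedge\rhohat$, and the first expression in~\eqref{eq:Theta1} identifies the bracketed $2$-form as $\Theta^\rho$, giving $\delta\sE(\rho)\rhohat=\int_M\Theta^\rho\wedge\rhohat$.

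For the gradient, set $G:=-d*^\rho d\Theta^\rho$; this is exact, hence a genuine tangent vector to $\sS_a$. The crucial choice is the primitive $\lambda_G:=-*^\rho d\Theta^\rho$, for which $d\lambda_G=G$ and $*^\rho\lambda_G=d\Theta^\rho$, using $*^\rho*^\rho=-\id$ on odd-degree forms in dimension four. In particular $*^\rho\lambda_G$ is exact, so $\lambda_G$ is an admissible primitive in the sense of Definition~\ref{def:DONmet}. Then for any exact $\rhohat=d\lambda$ with $*^\rho\lambda$ exact, the pointwise symmetry $\lambda\wedge*^\rho\lambda_G=\lambda_G\wedge*^\rho\lambda$ of the pairing~\eqref{eq:innerho} lets me write $\INNER{G}{\rhohat}_\rho=\int_M\lambda\wedge d\Theta^\rho$; integrating by parts on the closed manifold $M$ gives $\int_M d\lambda\wedge\Theta^\rho=\int_M\Theta^\rho\wedge\rhohat=\delta\sE(\rho)\rhohat$. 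Since the Donaldson metric is positive definite, this characterizes $G$ as $\grad\sE(\rho)$.

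Part~(ii) is then immediate. The energy identity is Proposition~\ref{prop:E} rewritten via~\eqref{eq:urho}; the differential follows by substituting~\eqref{eq:Theta4} for $\Theta^\rho$ into the formula of part~(i); and the gradient follows from~\eqref{eq:Theta5}, since $*^\rho d\Theta^\rho=*^\rho*^\rho\sum_i dK_i^\rho\circ J_i^\rho=-\sum_i dK_i^\rho\circ J_i^\rho$, whence $-d*^\rho d\Theta^\rho=\sum_i d\bigl(dK_i^\rho\circ J_i^\rho\bigr)$. I expect the main obstacle to be the gradient step of part~(i): one must respect the constraint ``$*^\rho\lambda$ exact'' built into the Donaldson metric, and the clean integration by parts succeeds only because the chosen primitive $\lambda_G$ satisfies this constraint and because the pairing is symmetric; keeping the degree-dependent signs of $*^\rho*^\rho$ correct is the one place where an error would propagate.
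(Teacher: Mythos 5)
Your proposal is correct and follows essentially the same route as the paper: a direct variation of the energy yielding $\int_M\Theta^\rho\wedge\rhohat$ (you differentiate $\int_M u^{-1}\abs{\rho^+}^2\,\dvol$ and land on the form~\eqref{eq:Theta1}, the paper differentiates $\int_M\tfrac{\rho\wedge*\rho}{2u}$ and lands on~\eqref{eq:Theta}, which is the same $2$-form), followed by the identification of the gradient via the primitive $-*^\rho d\Theta^\rho$ and integration by parts, and then substitution of~\eqref{eq:Theta4} and~\eqref{eq:Theta5} for part~(ii). Your write-up is in fact slightly more careful than the paper's in one spot: you verify explicitly that $*^\rho(-*^\rho d\Theta^\rho)=d\Theta^\rho$ is exact, so the chosen primitive is admissible for the Donaldson metric, a point the paper leaves implicit.
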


\begin{proof}
Let $\rho\in\sS_a$ and define $u:=\frac{\dvol_\rho}{\dvol}$. 
Then, by equation~\eqref{eq:urho},
$$
\sE(\rho) - \Vol(M) = \int_M
\frac{\abs{\rho^+}^2+\abs{\rho^-}^2}
{\abs{\rho^+}^2-\abs{\rho^-}^2}\,\dvol
= \int_M\frac{\rho\wedge *\rho}{2u}.
$$
Choose a path $\R\to\sS_a:t\mapsto\rho_t$ 
and define
$$
u_t := \frac{\rho_t\wedge\rho_t}{2\dvol},\qquad
\uhat_t := \p_tu_t = \frac{\rho_t\wedge\rhohat_t}{\dvol},\qquad
\rhohat_t := \p_t\rho_t.
$$
Then
\begin{eqnarray*}
\frac{d}{dt}\sE(\rho_t)
&=&
\frac{d}{dt}\int_M\frac{\rho_t\wedge *\rho_t}{2u_t} 
=
\int_M\frac{\rho_t\wedge *\rhohat_t}{u_t}
- \int_M\frac12\Abs{\frac{\rho_t}{u_t}}^2\,\uhat_t\dvol \\
&=&
\int_M\frac{*\rho_t\wedge \rhohat_t}{u_t}
- \int_M\frac12\Abs{\frac{\rho_t}{u_t}}^2\,\rho_t\wedge\rhohat_t 
=
\int_M\Theta^{\rho_t}\wedge\rhohat_t.
\end{eqnarray*}
This proves the formula for $\delta\sE(\rho)$.  
Now let $\rhohat\in T_\rho\sS_a$ and choose 
$\lambda\in\Om^1(M)$ such that $d\lambda=\rhohat$ 
and $*^\rho\lambda\in\im\,d$.  Then 
$$
\delta\sE(\rho)\rhohat
= \int_M\Theta^\rho\wedge d\lambda
= - \int_M d\Theta^\rho\wedge \lambda
= \INNER{-d*^\rho d\Theta^\rho}{\rhohat}_\rho.
$$
This proves part~(i). In part~(ii) the first equation 
in~\eqref{eq:dE-HK} follows from Proposition~\ref{prop:E}, 
the second equation follows from~\eqref{eq:dE} and~\eqref{eq:Theta4}
and the third equation follows from~\eqref{eq:dE} and~\eqref{eq:Theta5}. 
This proves Proposition~\ref{prop:DGF}.
\end{proof}

\bigbreak

By part~(i) of Proposition~\ref{prop:DGF} a smooth path $\R\to\sS_a:t\mapsto\rho_t$ 
is a negative gradient flow line of $\sE$ with respect to the Donaldson metric
if and only if it satisfies the partial differential equation
\begin{equation}\label{eq:DGF}
\p_t\rho_t = d*^{\rho_t} d\Theta_t,\qquad
\Theta_t := \frac{2\rho_t^+}{u_t}
- \Abs{\frac{\rho_t^+}{u_t}}^2\rho_t,\qquad
u_t := \frac{\dvol_{\rho_t}}{\dvol}.
\end{equation}
Equation~\eqref{eq:DGF} is the {\bf Donaldson Geometric Flow}.
By part~(ii) of Proposition~\ref{prop:DGF} it agrees with the 
geometric flow~\eqref{eq:gradientrho} 
in the hyperK\"ahler case.

\begin{corollary}\label{cor:DGF}
{\bf (i)}
A symplectic form $\rho\in\sS_a$ is a critical point 
of the energy functional $\sE:\sS_a\to\R$ in~\eqref{eq:E} 
if and only if the $2$-form $\Theta^\rho$ is closed.

\smallskip\noindent{\bf (ii)}
Suppose $\om\in\sS_a$ is compatible with the background metric~$g$.
Then $\om$ is the unique absolute minimum of the energy
functional $\sE:\sS_a\to\R$. 

\smallskip\noindent{\bf (iii)}
Assume the hyperK\"ahler case. Then $\rho\in\sS_a$ is a 
critical point of the energy functional $\sE:\sS_a\to\R$
if and only if $\sum_{i=1}^3 dK^\rho_i\circ J^\rho_i=0$.
\end{corollary}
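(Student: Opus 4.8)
The plan is to read off all three statements from Proposition~\ref{prop:DGF} together with the pointwise energy formula~\eqref{eq:E}. Throughout I use that $T_\rho\sS_a$ is exactly the space of exact $2$-forms and that $M$ is closed.

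For part~(i), I would start from the first line of~\eqref{eq:dE}, which gives $\delta\sE(\rho)\rhohat = \int_M\Theta^\rho\wedge\rhohat$ for every $\rhohat\in T_\rho\sS_a$. Since every tangent vector is of the form $\rhohat=d\lambda$ with $\lambda\in\Om^1(M)$, the form $\rho$ is critical precisely when $\int_M\Theta^\rho\wedge d\lambda = 0$ for all $\lambda$. Integration by parts on the closed manifold $M$ (using $d(\Theta^\rho\wedge\lambda)=d\Theta^\rho\wedge\lambda+\Theta^\rho\wedge d\lambda$) rewrites this as $\int_M d\Theta^\rho\wedge\lambda = 0$ for all $\lambda\in\Om^1(M)$. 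The pointwise nondegeneracy of the wedge pairing $\Om^3(M)\times\Om^1(M)\to\Om^4(M)$, tested against bump one-forms via the fundamental lemma of the calculus of variations, then forces $d\Theta^\rho=0$; the converse is the same chain of equalities read backwards.

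For part~(ii), the key observation is the pointwise identity
\[
\frac{2\abs{\rho^+}^2}{\abs{\rho^+}^2-\abs{\rho^-}^2}
= 2 + \frac{2\abs{\rho^-}^2}{\abs{\rho^+}^2-\abs{\rho^-}^2} \ge 2,
\]
valid because $\abs{\rho^+}^2-\abs{\rho^-}^2=2u>0$ by~\eqref{eq:urho}, with equality if and only if $\rho^-=0$, i.e. $\rho$ is self-dual. Integrating yields $\sE(\rho)\ge 2\Vol(M)$ for all $\rho\in\sS_a$. A symplectic form $\om$ compatible with $g$ satisfies $*\om=\om$ in dimension four, hence $\sE(\om)=2\Vol(M)$ and $\om$ attains the minimum. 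For uniqueness I would note that any minimizer has integrand equal to $2$ almost everywhere, hence everywhere by continuity, so $\rho^-=0$ identically; then $*\rho=\rho$ together with $d\rho=0$ gives $d^*\rho=-*d*\rho=0$, so $\rho$ is harmonic. Since $\om$ is likewise harmonic and lies in the class $a$, uniqueness of the harmonic representative in a fixed de Rham class forces $\rho=\om$.

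Part~(iii) is then immediate: in the hyperK\"ahler case part~(i) says $\rho$ is critical if and only if $d\Theta^\rho=0$, and equation~\eqref{eq:Theta5} expresses $d\Theta^\rho = *^\rho\sum_{i=1}^3 dK_i^\rho\circ J_i^\rho$. Because $*^\rho$ is a linear isomorphism, this vanishes exactly when $\sum_{i=1}^3 dK_i^\rho\circ J_i^\rho=0$. I expect the only genuinely delicate point to be the uniqueness clause of part~(ii): parts~(i) and~(iii) and the lower bound are direct integration-by-parts and pointwise computations, whereas uniqueness relies on the two structural inputs that a compatible symplectic form in dimension four is self-dual (hence harmonic) and that harmonic representatives are unique within a cohomology class.
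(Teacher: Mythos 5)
Your proposal is correct and follows essentially the same route as the paper: part~(i) reads off criticality from~\eqref{eq:dE} (the paper leaves the integration by parts and nondegeneracy of the wedge pairing implicit), part~(ii) uses the same pointwise lower bound with equality iff $\rho^-=0$ together with the facts that compatible means self-dual, self-dual closed forms are harmonic, and the harmonic representative of $a$ is unique, and part~(iii) combines~(i) with~\eqref{eq:Theta5}. The only difference is that you spell out routine details the paper compresses into one-line citations.
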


\begin{proof}
Part~(i) follows from equation~\eqref{eq:dE}
in Proposition~\ref{prop:DGF}.  To prove~(ii) observe that 
a symplectic form $\om\in\sS_a$ is compatible with the metric~$g$ 
if and only if it is self-dual. Moreover, every self-dual symplectic 
form is harmonic and the class $a$ has a unique harmonic 
representative.  Since
$$
\frac12\sE(\rho) = \int_M\frac{\abs{\rho^+}^2}
{\abs{\rho^+}^2-\abs{\rho^-}^2}\,\dvol
\ge \int_M\dvol =: \Vol(M)
$$
for all $\rho\in\sS_a$, with equality if and only if $\rho^-=0$, this proves~(ii).
Part~(iii) follows from~(i) and equation~\eqref{eq:Theta5}
in Proposition~\ref{prop:Theta}. 
\end{proof}

The next proposition is an observation of Donaldson~\cite{DON1}
which asserts that the energy controls the $L^1$ norm of $\rho$. 

\begin{proposition}[{\bf Donaldson's $L^1$ Estimate}]\label{prop:DGFL1}
Every $\rho\in\sS_a$ satisfies 
\begin{equation}\label{eq:energyL1}
\Norm{\rho}_{L^1}\le \sqrt{c(\sE(\rho)-\Vol(M))},\qquad
c:=\int_M\rho\wedge\rho=\inner{a^2}{[M]}.
\end{equation}
\end{proposition}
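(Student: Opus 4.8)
The plan is to bound $\Norm{\rho}_{L^1}$ by the Cauchy--Schwarz inequality applied pointwise, writing the $L^1$ norm as an integral of $\abs{\rho}$ against the constant function $1$ with respect to the volume form, and then controlling $\abs{\rho}$ in terms of the energy density. The key pointwise identity to exploit is $\abs{\rho}^2=\abs{\rho^+}^2+\abs{\rho^-}^2$ together with $2u=\abs{\rho^+}^2-\abs{\rho^-}^2$ from~\eqref{eq:urho}, and the energy density $\frac{2\abs{\rho^+}^2}{\abs{\rho^+}^2-\abs{\rho^-}^2}$ from~\eqref{eq:E}. First I would note that the quantity $c=\int_M\rho\wedge\rho=2\int_M u\,\dvol$ is a cohomological invariant, equal to $\inner{a^2}{[M]}$, since $\rho$ is closed and represents the fixed class $a$; this is what makes the right-hand side depend only on $a$ and $\sE(\rho)$.

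The central step is to establish the pointwise inequality $\abs{\rho}^2\le (\text{energy density})\cdot 2u$, which would read
\begin{equation*}
\abs{\rho}^2 = \abs{\rho^+}^2+\abs{\rho^-}^2 \le \frac{2\abs{\rho^+}^2}{\abs{\rho^+}^2-\abs{\rho^-}^2}\cdot\bigl(\abs{\rho^+}^2-\abs{\rho^-}^2\bigr).
\end{equation*}
However, this is false in general, so the correct route is more delicate. Instead I would apply Cauchy--Schwarz in the integral form:
\begin{equation*}
\Norm{\rho}_{L^1} = \int_M\abs{\rho}\,\dvol
= \int_M \frac{\abs{\rho}}{\sqrt{2u}}\cdot\sqrt{2u}\,\dvol
\le \left(\int_M\frac{\abs{\rho}^2}{2u}\,\dvol\right)^{1/2}\left(\int_M 2u\,\dvol\right)^{1/2}.
\end{equation*}
The second factor is exactly $\sqrt{c}$. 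For the first factor I would show that $\frac{\abs{\rho}^2}{2u}\le \frac12\sE\text{-density} - 1$ pointwise, i.e. that
\begin{equation*}
\frac{\abs{\rho^+}^2+\abs{\rho^-}^2}{\abs{\rho^+}^2-\abs{\rho^-}^2}
\le \frac{2\abs{\rho^+}^2}{\abs{\rho^+}^2-\abs{\rho^-}^2} - 1
\end{equation*}
so that integrating gives $\int_M\frac{\abs{\rho}^2}{2u}\,\dvol\le \sE(\rho)-\Vol(M)$. Multiplying the two factors then yields~\eqref{eq:energyL1}.

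The main obstacle is verifying this pointwise inequality between the integrand $\frac{\abs{\rho}^2}{2u}$ and the energy density. Setting $p:=\abs{\rho^+}^2$ and $q:=\abs{\rho^-}^2$ with $p>q\ge 0$ (nondegeneracy forces $p>q$ so that $2u=p-q>0$), the claimed inequality becomes $\frac{p+q}{p-q}\le\frac{2p}{p-q}-1=\frac{2p-(p-q)}{p-q}=\frac{p+q}{p-q}$, which holds with equality. So the inequality is in fact an exact identity, and the estimate reduces to a clean application of Cauchy--Schwarz with no loss in the density step. The only genuine inequality is therefore the Cauchy--Schwarz step itself, with equality when $\frac{\abs{\rho}^2}{2u}$ is constant; I would conclude by remarking that $c>0$ since $a^2>0$, so the square root is well defined, completing the proof.
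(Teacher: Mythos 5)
Your final argument is correct and is essentially the paper's own proof: the same Cauchy--Schwarz splitting $\abs{\rho}=\frac{\abs{\rho}}{\sqrt{2u}}\cdot\sqrt{2u}$ (the paper writes $2u=\abs{\rho^+}^2-\abs{\rho^-}^2$), the identification $\int_M 2u\,\dvol=\int_M\rho\wedge\rho=c$, and the exact identity $\int_M\frac{\abs{\rho}^2}{2u}\,\dvol=\sE(\rho)-\Vol(M)$. One small slip in your exposition: the pointwise inequality you dismiss as ``false in general'', namely $\abs{\rho^+}^2+\abs{\rho^-}^2\le 2\abs{\rho^+}^2$, is in fact true (it is just $\abs{\rho^-}\le\abs{\rho^+}$, guaranteed by $\rho\wedge\rho>0$), but pursuing it would only yield the weaker bound $\Norm{\rho}_{L^1}\le\sqrt{c\,\sE(\rho)}$, so the integral Cauchy--Schwarz route you settled on is the right one and coincides with the paper's.
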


\begin{proof}
By the Cauchy--Schwarz inequality,
\begin{eqnarray*}
\left(\int_M\abs{\rho}\,\dvol\right)^2
&\le&
\left(\int_M\left(\abs{\rho^+}^2-\abs{\rho^-}^2\right)\,\dvol\right)
\int_M\frac{\abs{\rho}^2}{\abs{\rho^+}^2-\abs{\rho^-}^2}\,\dvol \\
&=&
\left(\int_M\rho\wedge\rho\right)
\int_M\frac{\abs{\rho^+}^2+\abs{\rho^-}^2}{\abs{\rho^+}^2-\abs{\rho^-}^2}\,\dvol \\
&=&
c(\sE(\rho)-\Vol(M)).
\end{eqnarray*}
This proves Proposition~\ref{prop:DGFL1}.
\end{proof}

\bigbreak

\begin{remark}\label{rmk:DGF}\rm
{\bf (i)}
Donaldson's conjectural program involves a proof 
of longtime existence for all initial conditions,
a proof that solutions cannot escape to infinity,
and a proof that higher critical points can be {\it bypassed},
i.e.\ that they cannot be local minima.  In those cases 
where this program can be carried out it would then follow
that the space $\sS_a$ is connected, which is an open question
for all closed four-manifolds $M$ and all cohomology classes
$a$ that can be represented by symplectic forms (see~\cite{SAL}).
Short time existence and regularity, as well as long time existence 
for initial conditions sufficiently close to the absolute minimum, 
are established in~\cite{KROM}. 

\smallskip\noindent{\bf (ii)}
In many situations (including certain K\"ahler 
classes on the K3-surface) a theorem of 
Seidel~\cite{SEIDEL1,SEIDEL2,SEIDEL3} asserts the 
existence of symplectomorphisms of $(M,\om)$ that 
are smoothly, but not symplectically, 
isotopic to the identity.  This implies the 
existence of noncontractible loops in $\sS_a$. 
Hence, if the analytic difficulties in Donaldson's 
geometric flow approach can be settled, it would follow
that in these cases the energy functional $\sE:\sS_a\to\R$
must have critical points of index one, assuming
that they are nondegenerate. Many other examples of nontrivial 
cohomology classes in $\sS_a$ of all degrees were found 
by Kronheimer~\cite{KRONHEIMER} using Seiberg--Witten theory.

\smallskip\noindent{\bf (iii)}
By an observation of Donaldson~\cite{DON1} higher critical points of $\sE$ 
(not equal to the absolute minimum) cannot be strictly stable in
the hyperK\"ahler case.  We include a proof of this 
result in Section~\ref{sec:HESSIAN} (Theorem~\ref{thm:DON}).
\end{remark}

\begin{proposition}\label{prop:CP2}
Let $M=\CP^2$ be the complex projective plane 
with its standard K\"ahler metric, 
let $\om_\FS$ be the Fubini--Study form, 
and define 
$$
a:=[\om_\FS]\in H^2(M;\R).  
$$
Then $\om_\FS$ is the only critical point, and the absolute minimum, 
of the energy functional $\sE:\sS_a\to\R$ in~\eqref{eq:E}.
\end{proposition}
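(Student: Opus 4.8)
The plan is to combine the general criterion for critical points established in Corollary~\ref{cor:DGF} with the special topology of $\CP^2$, namely that $b^+(\CP^2)=1$ and $b^-(\CP^2)=0$, so that $H^2(\CP^2;\R)=\R\cdot a$ is one-dimensional and spanned by the symplectic class. First I would record that $\om_\FS$ is the K\"ahler form of the standard Fubini--Study metric $g$, hence self-dual and compatible with $g$. By Corollary~\ref{cor:DGF}~(ii) this already shows that $\om_\FS$ is the unique absolute minimum of $\sE$, and in particular a critical point. It remains only to prove that there are no other critical points.

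So let $\rho\in\sS_a$ be an arbitrary critical point. By Corollary~\ref{cor:DGF}~(i) the $2$-form $\Theta^\rho$ is closed and therefore determines a class $[\Theta^\rho]\in H^2(\CP^2;\R)$. Since this group is spanned by $a$, I would write $[\Theta^\rho]=\lambda a$ for some $\lambda\in\R$ and then integrate the pointwise identity $\Theta^\rho\wedge\rho=0$ from~\eqref{eq:Theta0}, using that $[\rho]=a$:
$$
0=\int_M\Theta^\rho\wedge\rho=\lambda\inner{a^2}{[M]}=\lambda c,\qquad c:=\inner{a^2}{[M]}>0.
$$
Hence $\lambda=0$, so $\Theta^\rho$ is not merely closed but in fact exact.

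This exactness is the heart of the matter. It forces the cup-square to vanish, $\int_M\Theta^\rho\wedge\Theta^\rho=0$, and comparing with the pointwise formula~\eqref{eq:Theta2}, which gives $\Theta^\rho\wedge\Theta^\rho=-\tfrac{2\abs{\rho^+}^2\abs{\rho^-}^2}{u^3}\dvol\le0$, I conclude that the integrand vanishes identically, i.e. $\abs{\rho^+}^2\abs{\rho^-}^2\equiv0$. Since $\rho\wedge\rho>0$ forces $\abs{\rho^+}^2-\abs{\rho^-}^2=2u>0$ and hence $\abs{\rho^+}^2>0$ everywhere, this yields $\rho^-\equiv0$: the critical point $\rho$ is self-dual. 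A closed self-dual $2$-form is harmonic, and $a$ has a unique harmonic representative, namely $\om_\FS$; therefore $\rho=\om_\FS$, completing the proof.

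The main obstacle is really a conceptual rather than a technical one: it is the cohomological step forcing $\Theta^\rho$ to be exact, which uses $b^-(\CP^2)=0$ in an essential way so that the pointwise orthogonality $\Theta^\rho\wedge\rho=0$ can be upgraded to the vanishing of $[\Theta^\rho]$. On a four-manifold with $b^->0$ this argument breaks down, since then $H^2$ contains directions not detected by pairing against $a$, and one would have to control the anti-self-dual harmonic part of $\Theta^\rho$ separately. Once exactness is in hand, the remainder follows mechanically from the sign-definiteness in~\eqref{eq:Theta2} together with the nondegeneracy of $\rho$ and Hodge theory.
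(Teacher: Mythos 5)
Your proposal is correct and follows essentially the same route as the paper's own proof: self-duality of $\om_\FS$ plus Corollary~\ref{cor:DGF}~(ii) for the minimum, then closedness of $\Theta^\rho$, the pointwise identity $\Theta^\rho\wedge\rho=0$, and $\dim H^2(\CP^2;\R)=1$ to force exactness, followed by the sign-definite formula~\eqref{eq:Theta2} and Hodge theory to conclude $\rho=\om_\FS$. Your explicit pairing argument (writing $[\Theta^\rho]=\lambda a$ and integrating against $\rho$) merely spells out the step the paper states in one sentence, so there is nothing substantive to add.
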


\begin{proof}
That $\om_\FS$ is the unique absolute minimum of $\sE$
follows from part~(ii) of Corollary~\ref{cor:DGF}. 
Now let $\rho\in\sS_a$ be any critical point of $\sE$. 
Then $\Theta^\rho$ is closed by part~(i) of 
Proposition~\ref{prop:DGF} and $\Theta^\rho\wedge\rho=0$ 
by part~(i) of Proposition~\ref{prop:Theta}. 
Since $H^2(M;\R)$ is one-dimensional, 
this implies that $\Theta^\rho$ is exact. 
Hence it follows from part~(iii) 
of Proposition~\ref{prop:Theta} that
$$
0 
= \int_{\CP^2}\Theta^\rho\wedge\Theta^\rho
= - \int_{\CP^2}\frac{2\abs{\rho^+}^2\abs{\rho^-}^2}{u^3}\,\dvol_\FS,\qquad
u := \frac{\dvol_\rho}{\dvol_\FS}.
$$
This shows that $\rho^-=0$.  Thus $\rho$ is self-dual 
and hence is harmonic. Since $[\rho]=a=[\om_\FS]$ and $\om_\FS$
is also a harmonic $2$-form it follows that $\rho=\om_\FS$. 
\end{proof}

%%%%%%%%%%%%%%%%%%%%%%%%%%%%%%%%%%%%%%%%%%%%%%%%%%
%%%%%%%%%%%%%%%%%%%%%%%%%%%%%%%%%%%%%%%%%%%%%%%%%%
%%%%%%%%%%%%%%%%%%%%%%%%%%%%%%%%%%%%%%%%%%%%%%%%%%
%%%%%%%%%%%%%%%%%%%%%%  SECTION 4  %%%%%%%%%%%%%%%
%%%%%%%%%%%%%%%%%%%%%%%%%%%%%%%%%%%%%%%%%%%%%%%%%%
%%%%%%%%%%%%%%%%%%%%%%%%%%%%%%%%%%%%%%%%%%%%%%%%%%
%%%%%%%%%%%%%%%%%%%%%%%%%%%%%%%%%%%%%%%%%%%%%%%%%%

\section{The Hessian}\label{sec:HESSIAN}

The infinite-dimensional {\it manifold} $\sS_a$ 
is an open set in an affine space. Hence the Hessian of $\sE$ 
is well defined for every $\rho\in\sS_a$ as the second derivative 
${\sH_\rho(\rhohat):=\frac{d^2}{dt^2}|_{t=0}\sE(\rho_t)}$
along a curve $\R\to\sS_a:t\to\rho_t$ satisfying $\rho_0=\rho$,
$\frac{d}{dt}|_{t=0}\rho_t=\rhohat$, and 
$\frac{d^2}{dt^2}|_{t=0}\rho_t=0$.

\begin{theorem}\label{thm:HESSIAN}
Let $\rho\in\sS_a$.  Then the following holds.

\smallskip\noindent{\bf (i)}
The Hessian of $\sE$ at $\rho$ is the quadratic form
$\sH_\rho:T_\rho\sS_a\to\R$ given by 
\begin{equation}\label{eq:HESSIAN1}
\sH_\rho(\rhohat)
= \int_M\Thetahat\wedge\rhohat,\qquad
\Thetahat := \frac{\rhohat + *^\rho\rhohat}{u}
-\Abs{\frac{\rho^+}{u}}^2\rhohat.
\end{equation}
As a linear operator the Hessian is the map
$T_\rho\sS_a\to T_\rho\sS_a:\rhohat\mapsto - d*^\rho d\Thetahat$. 

\smallskip\noindent{\bf (ii)}
Assume the hyperK\"ahler case and define
\begin{equation}\label{eq:K}
K_i:=K_i^\rho = \frac{\om_i\wedge\rho}{\dvol_\rho},\qquad
\om_i^\rho:=\om_i-K_i\rho.
\end{equation}
Let $\rhohat\in T_\rho\sS_a$, choose ${X\in\Vect(M)}$ 
such that $-d\iota(X)\rho=\rhohat$, 
%%%DAS - added a phrase for clarity
%and define 
%%%
%\begin{equation}\label{eq:HKhat}
%\Khat_i := \frac{(\om_i-K_i\rho)\wedge\rhohat}{\dvol_\rho},\qquad
%\Hhat_i := \frac{(d\iota(X)\om_i)\wedge\rho}{\dvol_\rho}.
%\end{equation}
%%%
%Then 
define 
\begin{equation}\label{eq:HKhat}
\Khat_i := \frac{(\om_i-K_i\rho)\wedge\rhohat}{\dvol_\rho},\qquad
\Hhat_i := \frac{(d\iota(X)\om_i)\wedge\rho}{\dvol_\rho},
\end{equation}
and let~$\Thetahat$ be as in~\eqref{eq:HESSIAN1}.
%%%DAS - end
Then 
\begin{equation}\label{eq:Thetahat}
\Thetahat=\sum_{i=1}^3\Khat_i\om_i^\rho-\frac12\sum_{i=1}^3K_i^2\rhohat,
\end{equation}
\begin{equation}\label{eq:HESSIAN2}
\sH_\rho(\rhohat)
= \int_M\sum_i\left(\Khat_i^2\dvol_\rho 
- \frac12K_i^2\rhohat\wedge\rhohat\right).
\end{equation}
Moreover, if $\rho$ is a critical point of $\sE$, then
\begin{equation}\label{eq:HESSIAN3}
\begin{split}
\sH_\rho(\rhohat)
&= 
\int_M\sum_{i=1}^3\om_i(X,X_{\Hhat_i}+\Nabla{X_{K_i}}X)\,\dvol_\rho \\
&=
\int_M\sum_{i=1}^3\left(
\Hhat_i^2\dvol_\rho + \om_i(X,\Nabla{X_{K_i}}X)
\right)\,\dvol_\rho.
\end{split}
\end{equation}
Here $\nabla$ denotes the Levi-Civita connection of the 
hyperK\"ahler metric and $X_F$ denotes the Hamiltonian 
vector field of a function $F:M\to\R$ with respect 
to $\rho$, i.e.\ $\iota(X_F)\rho=dF$. 
\end{theorem}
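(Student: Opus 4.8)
The plan is to compute the second derivative $\frac{d^2}{dt^2}|_{t=0}\sE(\rho_t)$ directly, exploiting the first-derivative formula $\delta\sE(\rho)\rhohat=\int_M\Theta^\rho\wedge\rhohat$ from Proposition~\ref{prop:DGF}. Since the curve is chosen with $\frac{d^2}{dt^2}|_{t=0}\rho_t=0$, differentiating $\frac{d}{dt}\sE(\rho_t)=\int_M\Theta^{\rho_t}\wedge\rhohat_t$ once more produces $\sH_\rho(\rhohat)=\int_M\Thetahat\wedge\rhohat$, where $\Thetahat$ is precisely the derivative of $\Theta^{\rho_t}$ computed in part~(iv) of Proposition~\ref{prop:Theta}, namely $\Thetahat=\frac{\rhohat+*^\rho\rhohat}{u}-|\frac{\rho^+}{u}|^2\rhohat$. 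This gives the quadratic form in~\eqref{eq:HESSIAN1} immediately. For the operator form I would write $\rhohat=d\lambda$ with $*^\rho\lambda$ exact as in Definition~\ref{def:DONmet}, integrate by parts to move $d$ onto $\Thetahat$, and identify the result with $\langle -d*^\rho d\Thetahat,\rhohat\rangle_\rho$, mirroring exactly the computation of $\grad\sE$ in Proposition~\ref{prop:DGF}.

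For part~(ii) in the hyperK\"ahler case, I would first establish~\eqref{eq:Thetahat} by inserting the hyperK\"ahler expression~\eqref{eq:Theta4} for $\Theta^\rho$ into the definition of $\Thetahat$ and differentiating. Using $\rho^+=\frac{u}{2}\sum_i K_i\om_i$ and the definitions of $\Khat_i$ and $\om_i^\rho=\om_i-K_i\rho$ in~\eqref{eq:K}--\eqref{eq:HKhat}, the derivative of $\sum_i(K_i\om_i-\frac12 K_i^2\rho)$ should reorganize into $\sum_i\Khat_i\om_i^\rho-\frac12\sum_i K_i^2\rhohat$; the key is to verify that $\Khat_i$ as defined is exactly $\frac{d}{dt}|_{t=0}K_i^{\rho_t}$, which follows since differentiating $K_i^{\rho_t}=\frac{\om_i\wedge\rho_t}{\dvol_{\rho_t}}$ and using $\p_t\dvol_{\rho_t}=\rho\wedge\rhohat$ produces the numerator $(\om_i-K_i\rho)\wedge\rhohat$. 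Substituting~\eqref{eq:Thetahat} into $\sH_\rho(\rhohat)=\int_M\Thetahat\wedge\rhohat$ and using the pointwise identities $\om_i^\rho\wedge\rhohat=\Khat_i\dvol_\rho$ (from the definition of $\Khat_i$) and $\rho\wedge\rhohat=\uhat\,\dvol$ then yields~\eqref{eq:HESSIAN2} after collecting terms.

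The main obstacle will be~\eqref{eq:HESSIAN3}, which requires rewriting the Hessian at a critical point in terms of the vector field $X$ with $\rhohat=-d\iota(X)\rho$ and the Hamiltonian/geometric quantities. Here the critical point condition $\sum_i dK_i\circ J_i^\rho=0$ (Corollary~\ref{cor:DGF}(iii)) must be used to discard or transform terms. I would express $\Khat_i$ and $\Hhat_i$ through $X$: since $\rhohat=-d\iota(X)\rho=-\cL_X\rho$, Cartan's formula and the identity $\iota(X_{K_i})\rho=dK_i$ should let me relate $\Khat_i$ to $\Hhat_i$ and to derivatives of $X$ along the Hamiltonian flows. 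The hard part is the appearance of the Levi-Civita connection $\nabla$ of the hyperK\"ahler metric: transforming the quadratic expression $\sum_i(\Khat_i^2\dvol_\rho-\frac12 K_i^2\rhohat\wedge\rhohat)$ into $\sum_i\om_i(X,X_{\Hhat_i}+\Nabla{X_{K_i}}X)\dvol_\rho$ will require the flatness and parallelism of the $\om_i$ and $J_i$ in the hyperK\"ahler structure, together with careful integration by parts to introduce the covariant derivative $\Nabla{X_{K_i}}X$. I expect the quaternionic relations $J_iJ_j=J_k$ and the fact that each $\om_i$ is parallel to be essential in showing the cross terms assemble into the Hamiltonian vector field $X_{\Hhat_i}$ of $\Hhat_i$, and the final equality between the two lines of~\eqref{eq:HESSIAN3} should follow from $\om_i(X,X_{\Hhat_i})=\Hhat_i^2\,\dvol_\rho/\dvol$ via the moment-map relation.
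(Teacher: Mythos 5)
Parts~(i) and~(ii) of your plan, through equation~\eqref{eq:HESSIAN2}, are correct and essentially the paper's own argument: \eqref{eq:HESSIAN1} follows by differentiating $\frac{d}{dt}\sE(\rho_t)=\int_M\Theta^{\rho_t}\wedge\rhohat_t$ and invoking Proposition~\ref{prop:Theta}(iv), and your route to \eqref{eq:Thetahat}--\eqref{eq:HESSIAN2} (differentiate~\eqref{eq:Theta4}, check $\frac{d}{dt}\big|_{t=0}K_i^{\rho_t}=\Khat_i$, substitute) is exactly the alternative derivation the paper itself records alongside its primary one via the self-dual basis $\om_i^\rho$ of $\Lambda^+_{g^\rho}$.

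For~\eqref{eq:HESSIAN3}, however, there is a genuine gap: your proposal is a statement of intent, and the one mechanism you name is not the one that carries the proof. The missing content is the identity (the paper's Lemma~\ref{le:HESSIAN2}), valid for \emph{every} $\rho\in\sS_a$ and every $X$ with $-d\iota(X)\rho=\rhohat$:
\begin{equation*}
\begin{split}
&\int_M \sum_{i=1}^3 \left(\Hhat_i^2
+\om_i\bigl(X,\Nabla{X_{K_i}}X\bigr)\right)\dvol_\rho
= \int_M \sum_{i=1}^3
\left(\Khat_i^2\dvol_\rho-\frac{1}{2}K_i^2\,\rhohat\wedge\rhohat\right) \\
&\qquad
+ \int_M \sum_{i=1}^3 (\iota(X_{K_i})\om_i)\wedge(\iota(X)\rho)\wedge\rhohat
+ \int_M \sum_{i=1}^3 \om_i\bigl(X,\Nabla{X}X_{K_i}\bigr)\dvol_\rho.
\end{split}
\end{equation*}
Proving this is not a routine integration by parts: it rests on the pointwise relation $\cL_XK_i=\Hhat_i-\Khat_i$ (a consequence of $(\iota(X)\om_i)\wedge\rho+\om_i^\rho\wedge\iota(X)\rho=0$, the paper's Lemma~\ref{le:HESSIAN1}), followed by a page of exterior-algebra bookkeeping in which the difference of the two sides is split into several integrals that are matched against one another using Cartan's formula. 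Your proposal contains neither the relation between $\cL_XK_i$, $\Hhat_i$, $\Khat_i$ nor any substitute for this computation. Once the identity is available, \eqref{eq:HESSIAN3} follows because at a critical point Corollary~\ref{cor:DGF}(iii) gives $\sum_i J_iX_{K_i}=0$, hence $\sum_i\iota(X_{K_i})\om_i=0$ and, using that the $J_i$ are parallel, $\sum_i\om_i(X,\Nabla{X}X_{K_i})=-\INNER{X}{\Nabla{X}\bigl(\textstyle\sum_iJ_iX_{K_i}\bigr)}=0$, so the two correction integrals vanish.

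You also misplace the role of the hyperK\"ahler structure. The equality of the two lines of~\eqref{eq:HESSIAN3} is the chain
$\int_M\Hhat_i^2\,\dvol_\rho
=\int_M\Hhat_i\, d(\iota(X)\om_i)\wedge\rho
=\int_M\iota(X)\om_i\wedge d\Hhat_i\wedge\rho
=\int_M\om_i(X,X_{\Hhat_i})\,\dvol_\rho$,
a simple integration by parts valid for each $i$ separately; it uses no quaternion relations and no parallelism. In particular it is only an \emph{integrated} identity: your pointwise claim $\om_i(X,X_{\Hhat_i})=\Hhat_i^2\,\dvol_\rho/\dvol$ is false in general, since the left-hand side involves the derivative of $\Hhat_i$. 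The quaternion relations $J_iJ_j=J_k$ are in fact not used anywhere in this theorem (they enter only later, in Theorem~\ref{thm:DON}); parallelism of the $J_i$ enters only at the single point indicated above.
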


\begin{proof}
%%%DAS - period added
See page~\pageref{proof:HESSIAN}.
\end{proof}

\bigbreak

In~\cite{KROM} it is shown that, for every $\rho\in\sS_a$, 
the quadratic form in~\eqref{eq:HESSIAN3} is the covariant Hessian 
of $\sE$ with respect to the Donaldson metric in Definition~\ref{def:DONmet}.
The proof of Theorem~\ref{thm:HESSIAN} relies on the following two lemmas. 

\begin{lemma}\label{le:HESSIAN1}
Let $\rho$ and $\om$ be symplectic forms on $M$ and define 
$$
K:=\frac{\om\wedge\rho}{\dvol_\rho},\qquad
\om^\rho := \om-K\rho.
$$
Then, for every vector field $X\in\Vect(M)$,
\begin{equation}\label{eq:omrhoX}
(\iota(X)\om)\wedge\rho + \om^\rho\wedge\iota(X)\rho = 0,
\end{equation}
\begin{equation}\label{eq:KomrhoX}
\frac{(d\iota(X)\om)\wedge\rho}{\dvol_\rho} 
+ \frac{\om^\rho\wedge d\iota(X)\rho}{\dvol_\rho}
= \cL_XK.
\end{equation}
If $\om$ is self-dual and $J$ is the almost complex 
structure such that $\om(\cdot,J\cdot)$
%%%DAS - comma inserted
is the background Riemannian metric on $M$, then
\begin{equation}\label{eq:omrhoX*}
(\iota(X)\om)\wedge\rho
= -*^\rho\iota(JX)\rho.
\end{equation}
\end{lemma}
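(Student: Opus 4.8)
The plan is to prove the three identities~\eqref{eq:omrhoX}, \eqref{eq:KomrhoX}, and~\eqref{eq:omrhoX*} in order, since each builds on the previous one. All three are identities among $4$-forms (or functions obtained by dividing $4$-forms by the volume form $\dvol_\rho$), so in principle they can be verified pointwise by expanding both sides. But the efficient route is to exploit the algebraic structure of $\om^\rho=\om-K\rho$, which is designed precisely so that $\om^\rho\wedge\rho=0$; indeed $\om^\rho\wedge\rho=\om\wedge\rho-K\,\rho\wedge\rho=\om\wedge\rho-\frac{\om\wedge\rho}{\dvol_\rho}\cdot2\dvol_\rho=0$.

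For~\eqref{eq:omrhoX}, the first step is to contract the identity $\om^\rho\wedge\rho=0$ with the vector field $X$ using the Leibniz rule for interior multiplication on the wedge product of two $2$-forms, namely $\iota(X)(\alpha\wedge\beta)=(\iota(X)\alpha)\wedge\beta+\alpha\wedge\iota(X)\beta$. Applying this to $\alpha=\om^\rho$ and $\beta=\rho$ gives $(\iota(X)\om^\rho)\wedge\rho+\om^\rho\wedge\iota(X)\rho=0$. Now I would replace $\iota(X)\om^\rho=\iota(X)\om-K\,\iota(X)\rho$ and observe that the term $-K(\iota(X)\rho)\wedge\rho$ vanishes, because $(\iota(X)\rho)\wedge\rho=\tfrac12\iota(X)(\rho\wedge\rho)$ is a $3$-form wedged so as to produce $\iota(X)$ applied to a top form times a multiple of $\rho$; more directly, contracting $\om^\rho\wedge\rho=0$ already isolates $(\iota(X)\om)\wedge\rho$ after substituting $K\rho$. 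Collecting terms yields exactly~\eqref{eq:omrhoX}.

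For~\eqref{eq:KomrhoX}, the natural step is to differentiate the defining relation $K\,\dvol_\rho=\tfrac12 K\,\rho\wedge\rho=\om\wedge\rho$ along the flow of $X$, i.e.\ apply the Lie derivative $\cL_X$ and use $\cL_X=d\iota(X)+\iota(X)d$ together with $d\om=d\rho=0$. Since $\cL_X(\om\wedge\rho)=(\cL_X\om)\wedge\rho+\om\wedge\cL_X\rho=(d\iota(X)\om)\wedge\rho+\om\wedge d\iota(X)\rho$, and since $\cL_X(K\,\dvol_\rho)=(\cL_XK)\dvol_\rho+K\,\cL_X\dvol_\rho$, I would divide through by $\dvol_\rho$ and simplify, using~\eqref{eq:omrhoX} to convert the $\om\wedge d\iota(X)\rho$ term into the $\om^\rho\wedge d\iota(X)\rho$ term and to absorb the $K\,\cL_X\dvol_\rho$ contribution (note $\cL_X\dvol_\rho=d\iota(X)\dvol_\rho$ integrates against the $K\rho$ correction). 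The bookkeeping here is the main place to be careful, but it is a finite pointwise computation once~\eqref{eq:omrhoX} is in hand.

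For~\eqref{eq:omrhoX*}, the key input is part~(iv) of Proposition~\ref{prop:grho}: when $\om$ is self-dual with $g=\om(\cdot,J\cdot)$, the $2$-form $\om^\rho=R^\rho\om$ satisfies $g^\rho=\om^\rho(\cdot,J^\rho\cdot)$, so $\om^\rho$ is self-dual for $g^\rho$. The plan is to rewrite the left side of~\eqref{eq:omrhoX} via~\eqref{eq:omrhoX} as $(\iota(X)\om)\wedge\rho=-\om^\rho\wedge\iota(X)\rho$, and then to recognize $\om^\rho\wedge\iota(X)\rho$ as $*^\rho$ applied to a suitable $1$-form using the formula $*^\rho(\lambda\circ J^\rho)=\lambda\wedge\om^\rho$ established in the proof of part~(v) of Proposition~\ref{prop:Theta}, combined with the second formula in Proposition~\ref{prop:grho}(ii), namely $*^\rho\iota(X)\rho=-\rho\wedge g(X,\cdot)$. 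The hardest part of the whole lemma is matching the almost-complex-structure twist $J$ in $\iota(JX)\rho$ correctly; I expect to need the relation $\rho(J^\rho\cdot,\cdot)=\rho(\cdot,J\cdot)$ together with the self-duality of $\om^\rho$ to turn $\om^\rho\wedge\iota(X)\rho$ into $*^\rho\iota(JX)\rho$, and getting that index/sign chase right is where the real work lies.
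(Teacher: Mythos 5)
Your plans for \eqref{eq:KomrhoX} and \eqref{eq:omrhoX*} are essentially sound (the Lie-derivative computation for \eqref{eq:KomrhoX} is equivalent to the paper's argument, which instead applies $d$ to \eqref{eq:omrhoX}; your route to \eqref{eq:omrhoX*} is the paper's route), but your proof of \eqref{eq:omrhoX} rests on a false identity. With the paper's conventions $\dvol_\rho=\tfrac12\rho\wedge\rho$ and $K=\frac{\om\wedge\rho}{\dvol_\rho}$, one has
\begin{equation*}
\om^\rho\wedge\rho
=\om\wedge\rho-K\,\rho\wedge\rho
=K\,\dvol_\rho-2K\,\dvol_\rho
=-K\,\dvol_\rho
=-\om\wedge\rho,
\end{equation*}
which is not zero unless $K\equiv0$. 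Your computation ``$\om\wedge\rho-\frac{\om\wedge\rho}{\dvol_\rho}\cdot2\dvol_\rho=0$'' drops a factor of $2$: the point is that $\om^\rho=R^\rho\om$ is the \emph{reflection} of $\om$ in the wedge-orthogonal complement of $\rho$ (the orthogonal projection would be $\om-\tfrac12K\rho$), so $\om^\rho$ is not wedge-orthogonal to $\rho$. Your derivation then makes a second error that happens to compensate: the term $-K(\iota(X)\rho)\wedge\rho$ does not vanish, since $(\iota(X)\rho)\wedge\rho=\tfrac12\iota(X)(\rho\wedge\rho)=\iota(X)\dvol_\rho$, which is nonzero wherever $X\neq0$. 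Because the two mistakes cancel exactly, you land on the true identity \eqref{eq:omrhoX}, but the argument as written is invalid.

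The repair is short and is what the paper does: contract the correct identity $\om\wedge\rho=K\,\dvol_\rho$ with $X$, namely
\begin{equation*}
(\iota(X)\om)\wedge\rho
=\iota(X)(\om\wedge\rho)-\om\wedge\iota(X)\rho
=K\,\iota(X)\dvol_\rho-\om\wedge\iota(X)\rho
=-\bigl(\om-K\rho\bigr)\wedge\iota(X)\rho,
\end{equation*}
where the last step uses precisely $\iota(X)\dvol_\rho=\rho\wedge\iota(X)\rho$, i.e.\ the nonvanishing of the very term you discarded. Equivalently, you may keep your setup but contract $\om^\rho\wedge\rho=-K\,\dvol_\rho$ instead of $\om^\rho\wedge\rho=0$: the contraction of the right-hand side produces $-K\,\rho\wedge\iota(X)\rho$, which cancels the $-K(\iota(X)\rho)\wedge\rho$ term on the left. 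Once \eqref{eq:omrhoX} is correctly established, your Lie-derivative argument for \eqref{eq:KomrhoX} goes through (the mechanism being $K\,\cL_X\dvol_\rho=K\,\rho\wedge d\iota(X)\rho$, which turns $\om\wedge d\iota(X)\rho$ into $\om^\rho\wedge d\iota(X)\rho$), and your deduction of \eqref{eq:omrhoX*} from \eqref{eq:omrhoX}, Proposition~\ref{prop:grho}~(iv), and the identity $\om^\rho\wedge\lambda=*^\rho(\lambda\circ J^\rho)$ with $\iota(X)\rho\circ J^\rho=\iota(JX)\rho$ is exactly the paper's.
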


\begin{proof}
Equation~\eqref{eq:omrhoX} follows by direct computation, i.e.\ 
\begin{eqnarray*}
\bigl(\iota(X)\om\bigr)\wedge\rho
&=&
\iota(X)\bigl(\om\wedge\rho\bigr) - \om\wedge\iota(X)\rho \\
&=&
K\iota(X)\dvol_\rho - \om\wedge\iota(X)\rho \\
&=&
-\bigl(\om-K\rho\bigr)\wedge\iota(X)\rho.
\end{eqnarray*}
Now differentiate equation~\eqref{eq:omrhoX}
and use the identity $d\om^\rho=-dK\wedge\rho$
to obtain
\begin{eqnarray*}
0 
&=&
d\bigl(\bigl(\iota(X)\om\bigr)\wedge\rho
+ \om^\rho\wedge\iota(X)\rho\bigr) \\
&=&
\bigl(d\iota(X)\om\bigr)\wedge\rho
+ \om^\rho\wedge d\iota(X)\rho
- dK\wedge\rho\wedge\iota(X)\rho \\
&=&
\bigl(d\iota(X)\om\bigr)\wedge\rho
+ \om^\rho\wedge d\iota(X)\rho
- dK\wedge\iota(X)\dvol_\rho \\
&=&
\bigl(d\iota(X)\om\bigr)\wedge\rho
+ \om^\rho\wedge d\iota(X)\rho
- (\iota(X)dK)\dvol_\rho.
\end{eqnarray*}
This proves~\eqref{eq:KomrhoX}.
Now suppose $\om$ is compatible with the almost complex structure $J$
and $\om(\cdot,J\cdot)$ is the background Riemannian metric.
Define the almost complex structure $J^\rho$ by 
$\rho(J^\rho\cdot,\cdot):=\rho(\cdot,J\cdot)$.
Then $g^\rho=\om^\rho(\cdot,J^\rho\cdot)$ by part~(iv) of
Theorem~\ref{thm:FOUR}. Hence it follows 
from~\eqref{eq:omrhoX} and Lemma~\ref{le:gomJ} that
$$
(\iota(X)\om)\wedge\rho
= - \om^\rho\wedge\iota(X)\rho
= -*^\rho(\iota(X)\rho\circ J^\rho)
= -*^\rho\iota(JX)\rho.
$$
This proves equation~\eqref{eq:omrhoX*} 
and Lemma~\ref{le:HESSIAN1}
\end{proof}

\bigbreak

The identities in Lemma~\ref{le:HESSIAN1} are needed
to establish the next result which relates equations~\eqref{eq:HESSIAN2}
and~\eqref{eq:HESSIAN3} and is a key step in the proof of 
Theorem~\ref{thm:HESSIAN}.  It is shown in~\cite{KROM} that the 
last two integrals in equation~\eqref{eq:HESSIANCOV} below
arise from the Levi-Civita connection of the Donaldson metric in 
Definition~\ref{def:DONmet} on the infinite-dimensional 
manifold $\sS_a$.  They vanish for critical points of $\sE$.  
Both sides of equation~\eqref{eq:HESSIANCOV} agree with the 
covariant Hessian of~$\sE$ at an arbitrary element 
$\rho\in\sS_a$ (see~\cite{KROM}).

\begin{lemma}\label{le:HESSIAN2}
Let $\rho\in\sS_a$, let $\rhohat\in T_\rho\sS_a$
be an exact $2$-form, let $K_i,\Khat_i,\Hhat_i$
be as in equations~\eqref{eq:K} and~\eqref{eq:HKhat} 
in Theorem~\ref{thm:HESSIAN}, and let $X\in\Vect(M)$ 
be any vector field such that $-d\iota(X)\rho=\rhohat$. 
Then
\begin{equation}\label{eq:HESSIANCOV}
\begin{split}
&\int_M \sum_{i=1}^3 \left(\Hhat_i^2
+\om_i\bigl(X,\Nabla{X_{K_i}}X\bigr)\right)\dvol_\rho
= \int_M \sum_{i=1}^3 
\left(\Khat_i^2\dvol_\rho-\frac{1}{2}K_i^2\rhohat^2\right) \\
&
+ \int_M \sum_{i=1}^3 (\iota(X_{K_i})\om_i)\wedge(\iota(X)\rho)\wedge\rhohat 
+ \int_M \sum_{i=1}^3 \om_i\bigl(X,\Nabla{X}X_{K_i}\bigr)\dvol_\rho.
\end{split}
\end{equation}
Here $\nabla$ is the Levi-Civita connection of the 
K\"ahler metric and $X_{K_i}$ is the Hamiltonian vector field 
of $K_i$ associated to $\rho$ so that $\iota(X_{K_i})\rho=dK_i$.
\end{lemma}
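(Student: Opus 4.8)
The plan is to prove~\eqref{eq:HESSIANCOV} by reducing both sides to a common pointwise expression: first use~\eqref{eq:KomrhoX} to link the $\Hhat_i$-terms to the $\Khat_i$-terms, then use Lie derivative calculus to eliminate the two Levi-Civita terms, and finally verify a purely exterior-algebraic identity using the hyperK\"ahler relations. First I would record the \emph{linking identity}. Since $-d\iota(X)\rho=\rhohat$, substituting $\om=\om_i$ into~\eqref{eq:KomrhoX} gives $\Hhat_i-\Khat_i=\cL_XK_i=\iota(X)\,dK_i=\rho(X_{K_i},X)$, where the last step uses $\iota(X_{K_i})\rho=dK_i$. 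Squaring yields $\Hhat_i^2=\Khat_i^2+2\Khat_i\,\rho(X_{K_i},X)+\rho(X_{K_i},X)^2$, so the terms $\int_M\sum_i\Khat_i^2\,\dvol_\rho$ cancel between the two sides of~\eqref{eq:HESSIANCOV}.

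Next I would dispose of the two connection terms. With the paper's bracket convention $[X,Y]=\Nabla{Y}X-\Nabla{X}Y$ one has $\om_i(X,\Nabla{X_{K_i}}X)-\om_i(X,\Nabla{X}X_{K_i})=\om_i(X,[X,X_{K_i}])$, and this bracket equals $-\cL_XX_{K_i}$. Because $\cL_XX=0$ and $\om_i$ is closed, the Lie derivative product rule gives $\om_i(X,\cL_XX_{K_i})=X\bigl(\om_i(X,X_{K_i})\bigr)-(d\iota(X)\om_i)(X,X_{K_i})$. Thus the term $\om_i(X,\Nabla{X}X_{K_i})$ cancels against its twin on the right-hand side, and the genuinely Riemannian content collapses to a divergence term $X(\om_i(X,X_{K_i}))$ together with the exterior term $(d\iota(X)\om_i)(X,X_{K_i})$; no $\nabla$ survives. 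The divergence term I would remove by Stokes: since $\cL_X\dvol_\rho=d\iota(X)\dvol_\rho$ and $\cL_X\dvol_\rho=\rho\wedge\cL_X\rho=-\rho\wedge\rhohat$, integration by parts on the closed manifold $M$ gives $\int_MX(F)\,\dvol_\rho=\int_MF\,\rho\wedge\rhohat$ for every function $F$, which I apply with $F=\om_i(X,X_{K_i})$.

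At this point the whole statement reduces to the pointwise four-form identity
\[
\sum_i\bigl(2\Khat_i\,\rho(X_{K_i},X)+\rho(X_{K_i},X)^2\bigr)\dvol_\rho
= \sum_i(\iota(X_{K_i})\om_i)\wedge\iota(X)\rho\wedge\rhohat
-\tfrac12\sum_iK_i^2\,\rhohat^2 + E,
\]
where $E:=\sum_i\om_i(X,X_{K_i})\,\rho\wedge\rhohat-\sum_i(d\iota(X)\om_i)(X,X_{K_i})\,\dvol_\rho$ collects the exterior corrections produced above. Here the hyperK\"ahler structure enters essentially through $\tfrac12\om_i\wedge\om_i=\dvol$ and $\om_i\wedge\om_j=0$ for $i\ne j$, through $\iota(Y)\om_i=g(J_iY,\cdot)$, and through $\iota(X_{K_i})\rho=dK_i$ and $\om_i^\rho=\om_i-K_i\rho$; these convert the sums of tensor contractions against $X$ and $X_{K_i}$ into wedge products, and~\eqref{eq:omrhoX} relates $\iota(X)\om_i$, $\om_i^\rho$, and $\iota(X)\rho$ as needed to match the wedge term on the right.

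The main obstacle is precisely this last step. The identity mixes contractions of $X$ and $X_{K_i}$ against $\om_i$ and $\rho$ with genuine four-forms, and matching the cross term $2\Khat_i\,\rho(X_{K_i},X)$ and the quadratic $\rho(X_{K_i},X)^2$ against $(\iota(X_{K_i})\om_i)\wedge\iota(X)\rho\wedge\rhohat$ and $-\tfrac12K_i^2\,\rhohat^2$ requires careful, coordinated use of the quaternion relations and of~\eqref{eq:omrhoX}, summed over $i$. The sign bookkeeping is where errors are most likely to arise, in particular the nonstandard bracket convention $[X,Y]=\Nabla{Y}X-\Nabla{X}Y=-\cL_XY$ and the sign in $\cL_X\dvol_\rho=-\rho\wedge\rhohat$, both of which feed directly into the cancellations of the paragraphs above.
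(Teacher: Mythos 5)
Your preliminary reductions are correct, and they parallel the paper's own opening moves: the linking identity $\Hhat_i-\Khat_i=\cL_XK_i=\rho(X_{K_i},X)$ is exactly the paper's equation~\eqref{eq:LXKi} (a consequence of~\eqref{eq:KomrhoX}), combining the two connection terms into $\om_i(X,\Nabla{X_{K_i}}X)-\om_i(X,\Nabla{X}X_{K_i})=\om_i(X,[X,X_{K_i}])$ is also how the paper removes the Levi-Civita connection from the problem, and your integration-by-parts formula $\int_MX(F)\,\dvol_\rho=\int_MF\,\rho\wedge\rhohat$ is valid. The genuine gap is that after these reductions you stop: the identity you arrive at \emph{is} the substance of the lemma, and you defer it (``the main obstacle is precisely this last step'') rather than prove it. In the paper this residual identity occupies the bulk of the argument: one integrates $-\tfrac12\sum_iK_i^2\,\rhohat\wedge\rhohat$ by parts against $K_i\,dK_i$ to produce the wedge term $\sum_i(\iota(X_{K_i})\om_i)\wedge(\iota(X)\rho)\wedge\rhohat$ plus corrections, uses $\iota(X_{K_i})\iota(X)\rho=-\cL_XK_i$ and $(\om_i-K_i\rho)\wedge\rhohat=\Khat_i\,\dvol_\rho$ to convert those corrections back into $\Khat_i(\cL_XK_i)$ and hence $\Hhat_i(\cL_XK_i)$ terms, and evaluates the bracket term by Cartan's formula together with~\eqref{eq:omrhoX}; the bookkeeping $A+B+C+D=2E$ in the notation of~\eqref{eq:ABCDE} is what makes the lemma true. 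None of this appears in your proposal, so the proof is incomplete precisely where the work lies.

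Moreover, your framing of the remaining step as a ``pointwise four-form identity'' to be checked by quaternionic linear algebra is not just unproven but wrong, and this is worth understanding because it shows the intended strategy would fail. In your displayed identity the term $-\tfrac12\sum_iK_i^2\,\rhohat\wedge\rhohat$ is \emph{quadratic} in the first derivatives of $X$ (recall $\rhohat=-d\iota(X)\rho$), whereas every other term --- $2\Khat_i\,\rho(X_{K_i},X)\,\dvol_\rho$, $\rho(X_{K_i},X)^2\,\dvol_\rho$, $(\iota(X_{K_i})\om_i)\wedge\iota(X)\rho\wedge\rhohat$, and both pieces of your $E$ --- is at most linear in $\nabla X$. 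No pointwise cancellation among such terms is possible: at a fixed point one can prescribe the $1$-jet of $X$ freely, and scaling the derivative part shows the two sides grow at different rates. The identity holds only after integration over $M$; indeed the paper disposes of the $K_i^2\,\rhohat\wedge\rhohat$ term by a further application of Stokes' theorem (its manipulation of the quantity $A$), not by pointwise algebra. So the missing step cannot be completed ``purely exterior-algebraically'' as you propose; it necessarily involves additional integrations by parts of the kind carried out in the paper.
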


\begin{proof}
Equation~\eqref{eq:HESSIANCOV} can be written in the form
\begin{equation}\label{eq:HESSIANcov}
\begin{split}
\int_M\sum_{i=1}^3 \Hhat_i^2\dvol_\rho
&=
\int_M \sum_{i=1}^3 
\left(\Khat_i^2\dvol_\rho 
- \frac{1}{2}K_i^2\rhohat\wedge\rhohat \right) \\
&\quad + 
\int_M\sum_{i=1}^3(\iota(X_{K_i})\om_i)\wedge(\iota(X)\rho)\wedge\rhohat \\
&\quad+ 
\int_M \sum_{i=1}^3 \om_i\bigl(X,[X_{K_i},X]\bigr)\dvol_\rho.
\end{split}
\end{equation}
To prove this formula we first observe that
\begin{equation}\label{eq:LXKi}
\cL_XK_i = \Hhat_i-\Khat_i
\end{equation}
for $i=1,2,3$ by equation~\eqref{eq:KomrhoX} 
in Lemma~\ref{le:HESSIAN1}. This implies
\begin{equation}\label{eq:HESSIANcov1}
\begin{split}
\int_M\sum_{i=1}^3 \Hhat_i^2\dvol_\rho
&=
\int_M \sum_{i=1}^3\Khat_i^2\dvol_\rho 
- \int_M \sum_{i=1}^3(\cL_XK_i)^2\dvol_\rho \\
&\quad 
+ 2\int_M \sum_{i=1}^3\Hhat_i(\cL_XK_i)\dvol_\rho.
\end{split}
\end{equation}
Define 
\begin{equation}\label{eq:ABCDE}
\begin{split}
A :=&\; -\frac12\int_M\sum_{i=1}^3K_i^2\rhohat\wedge\rhohat, \\
B :=&\; \int_M\sum_{i=1}^3(\iota(X_{K_i})\om_i)
\wedge(\iota(X)\rho)\wedge\rhohat, \\
C :=&\; \int_M\sum_{i=1}^3\om_i\bigl(X,[X_{K_i},X]\bigr)\dvol_\rho,\\
D :=&\; \int_M\sum_{i=1}^3(\cL_XK_i)^2\,\dvol_\rho,\\
E :=&\; \int_M\sum_{i=1}^3\Hhat_i(\cL_XK_i)\,\dvol_\rho.
\end{split}
\end{equation}
Then equation~\eqref{eq:HESSIANcov1} shows 
that~\eqref{eq:HESSIANcov} is equivalent to the identity
$$
A + B + C + D = 2E. 
$$
To prove this, we first observe that
\begin{eqnarray*}
A
&=&
\int_M\sum_{i=1}^3\frac12K_i^2(d\iota(X)\rho)\wedge\rhohat \\
&=&
- \int_M\sum_{i=1}^3 K_idK_i\wedge(\iota(X)\rho)\wedge\rhohat \\
&=&
- \int_M\sum_{i=1}^3 K_i(\iota(X_{K_i})\rho)
\wedge (\iota(X)\rho)\wedge\rhohat \\
&=&
- B + \int_M\sum_{i=1}^3 (\iota(X_{K_i})(\om_i-K_i\rho))
\wedge(\iota(X)\rho)\wedge\rhohat. 
\end{eqnarray*}
Hence $A+B=F+G$, where
\begin{equation}\label{eq:F12}
\begin{split}
F &:=
- \int_M\sum_{i=1}^3 (\om_i-K_i\rho)
\wedge (\iota(X_{K_i})\iota(X)\rho)\wedge\rhohat, \\
G &:=
\int_M\sum_{i=1}^3 (\om_i-K_i\rho)
\wedge (\iota(X)\rho)\wedge(\iota(X_{K_i})\rhohat).
\end{split}
\end{equation}
Since $\iota(X_{K_i})\iota(X)\rho=-\cL_XK_i$
and $(\om_i-K_i\rho)\wedge\rhohat = \Khat_i\dvol_\rho$,
we have
\begin{eqnarray*}
F
&=&
\int_M\sum_{i=1}^3\Khat_i(\cL_XK_i)\dvol_\rho \\
&=&
\int_M\sum_{i=1}^3
\left(\Hhat_i(\cL_XK_i)-(\cL_XK_i)^2\right)
\,\dvol_\rho \\
&=&
E-D.
\end{eqnarray*}
Here we have used equation~\eqref{eq:LXKi}.
To sum up, we have proved that 
$$
A+B+D=D+F+G=E+G.
$$
Thus it remains to prove that $C=E-G$. 
To see this, observe that
$$
\iota(\cL_{X_{K_i}}X)\dvol_\rho
= \rho\wedge\iota(\cL_{X_{K_i}}X)\rho
= \rho\wedge\cL_{X_{K_i}}(\iota(X)\rho) 
$$
and, by Cartan's formula,
$$
\cL_{X_{K_i}}\bigl(\iota(X)\rho\bigr)
= d\iota(X_{K_i})\iota(X)\rho + \iota(X_{K_i})d\iota(X)\rho
= - d(\cL_XK_i) - \iota(X_{K_i})\rhohat.
$$
Since $\om_i\bigl(X,[X_{K_i},X]\bigr)\dvol_\rho
= - (\iota(X)\om_i)\wedge\iota(\cL_{X_{K_i}}X)\dvol_\rho$,
this implies
\begin{equation*}
\om_i\bigl(X,[X_{K_i},X]\bigr)\dvol_\rho
= (\iota(X)\om_i)\wedge\rho\wedge 
\Bigl(d(\cL_XK_i) + \iota(X_{K_i})\rhohat\Bigr) 
\end{equation*}
for $i=1,2,3$.  Integrate over $M$ 
and take the sum over $i$ to obtain
\begin{equation*}
\begin{split}
C
&= 
\int_M\sum_{i=1}^3\om_i\bigl(X,[X_{K_i},X]\bigr)\dvol_\rho \\
&=
\int_M\sum_{i=1}^3(\iota(X)\om_i)\wedge\rho\wedge d(\cL_XK_i)
+ \int_M\sum_{i=1}^3(\iota(X)\om_i)\wedge\rho\wedge\iota(X_{K_i})\rhohat \\
&=
\int_M\sum_{i=1}^3\bigl(d(\iota(X)\om_i)\wedge\rho\bigr)\wedge\cL_XK_i
+ \int_M\sum_{i=1}^3(\iota(X)\om_i)\wedge\rho\wedge\iota(X_{K_i})\rhohat \\
&=
\int_M\sum_{i=1}^3\Hhat_i(\cL_XK_i)\dvol_\rho
- \int_M\sum_{i=1}^3
(\om_i-K_i\rho)\wedge(\iota(X)\rho)
\wedge\iota(X_{K_i})\rhohat \\
&=
E - G.
\end{split}
\end{equation*}
Here the penultimate equation follows from the definition
of $\Hhat_i$ and from equation~\eqref{eq:omrhoX} 
in Lemma~\ref{le:HESSIAN1}.  Thus $A+B+D=E+G$ 
and $C= E-G$, as claimed, and this completes
the proof of Lemma~\ref{le:HESSIAN2}.
\end{proof}

\begin{proof}[Proof of Theorem~\ref{thm:HESSIAN}]\label{proof:HESSIAN}
Consider the map $\sS_a\to\Om^2(M):\rho\mapsto\Theta^\rho$ in~\eqref{eq:Theta}.
By part~(iv) of Proposition~\ref{prop:Theta} its derivative at 
$\rho$ in the direction $\rhohat$ is given by the $2$-form
$\Thetahat$ in~\eqref{eq:Theta3}.  Hence equation~\eqref{eq:HESSIAN1}
for the Hessian follows from the formula~\eqref{eq:dE} for 
the differential of the energy functional $\sE$ in~\eqref{eq:E}. 

Next we prove equation~\eqref{eq:Thetahat} and~\eqref{eq:HESSIAN2}
in the hyperK\"ahler case. The $2$-forms 
$
\om_i^\rho=\om_i-K_i\rho=R^\rho\om_i
$ 
span the space of self-dual $2$-forms with respect to $g^\rho$ 
by part~(iii) of Proposition~\ref{prop:grho}.  Hence it follows 
from~\eqref{eq:HKhat} that
$$
\frac{\rhohat+*^\rho\rhohat}{u}
= \sum_{i=1}^3\Khat_i\om_i^\rho,\qquad
\Abs{\frac{\rho^+}{u}}^2 = \frac12\sum_{i=1}^3K_i^2.
$$
This proves~\eqref{eq:Thetahat} and~\eqref{eq:HESSIAN2}.
Alternatively, these two equations can be derived from the fact
that $\Theta^\rho$ is given by equation~\eqref{eq:Theta4} 
in the hyperk\"ahler case. Namely, choose a smooth path $\rho_t\in\cS_a$ 
such that $\rho_0=\rho$ and $\frac{d}{dt}|_{t=0}\rho_t=\rhohat$. 
Then $\frac{d}{dt}|_{t=0}K_i^{\rho_t}=\Khat_i$.  
Differentiate~\eqref{eq:Theta4} to obtain that 
$\Thetahat=\frac{d}{dt}|_{t=0}\Theta^{\rho_t}$ is given 
by~\eqref{eq:Thetahat} and inserte this formula 
into~\eqref{eq:HESSIAN1} to obtain~\eqref{eq:HESSIAN2}.

Next observe that the two integrals in~\eqref{eq:HESSIAN3}
agree because
\begin{eqnarray*}
\int_M\Hhat_i^2\,\dvol_\rho
&=& 
\int_M\Hhat_i d(\iota(X)\om_i)\wedge\rho 
=
\int_M\iota(X)\om_i\wedge d\Hhat_i\wedge\rho \\
&=&
\int_M\iota(X)\om_i\wedge\iota(X_{\Hhat_i})\dvol_\rho 
=
\int_M\om_i(X,X_{\Hhat_i})\dvol_\rho.
\end{eqnarray*}
Here the first equation follows from the definition 
of the function $\Hhat_i$ in~\eqref{eq:HKhat} and the third 
equation follows from the fact that $X_{\Hhat_i}$ is its
Hamiltonian vector field with respect to $\rho$.

Now suppose that $\rho\in\sS_a$ is a critical point of 
the energy functional $\sE$ in~\eqref{eq:E}. Then
$$
\sum_{i=1}^3\rho(J_iX_{K_i},\cdot) 
= \sum_{i=1}^3\rho(X_{K_i},J^\rho_i\cdot)
= \sum_{i=1}^3 dK_i\circ J_i^\rho 
= 0
$$
by part~(iii) of Corollary~\ref{cor:DGF}.  Hence
$\sum_{i=1}^3J_iX_{K_i}=0$ and this implies
\begin{equation}\label{eq:crit}
\sum_{i=1}^3J_i\Nabla{X}X_{K_i}=0,\qquad 
\sum_{i=1}^3\iota(X_{K_i})\om_i=0.
\end{equation}
Thus the last two integrals in equation~\eqref{eq:HESSIANCOV}
vanish and so the right hand side of equation~\eqref{eq:HESSIAN2}
agrees with the right hand side of equation~\eqref{eq:HESSIAN3}
by Lemma~\ref{le:HESSIAN2}. This proves Theorem~\ref{thm:HESSIAN}.
\end{proof}

\begin{corollary}\label{cor:MIN}
%%%DAS - comma inserted
If $\rho=\om\in\sS_a$ is self-dual, then
$$
\sH_\om(\rhohat)=\int_M\abs{\rhohat}^2\,\dvol
$$
for all $\rhohat\in T_\om\sS$. 
\end{corollary}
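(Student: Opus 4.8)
The plan is to evaluate the general Hessian formula~\eqref{eq:HESSIAN1} at the self-dual point $\om$, where all the $\rho$-dependent data reduce to the background metric. First I would record the normalisations. From $*\om=\om$ we get $\om^+=\om$ and $\om^-=0$. Moreover, as observed in the proof of part~(ii) of Corollary~\ref{cor:DGF}, a self-dual element of $\sS_a$ is compatible with the background metric $g$; hence $\dvol_\om=\dvol$, so that $u=\dvol_\om/\dvol=1$ and $\Abs{\om}^2=2$ (equivalently $\Abs{\om^+}^2=2u$ with $u=1$). Finally $*^\om=*$ on two-forms: by part~(iii) of Proposition~\ref{prop:grho} the involution $R^\om\tau=\tau-\frac{\tau\wedge\om}{\dvol_\om}\om$ fixes every anti-self-dual form and every self-dual form $g$-orthogonal to $\om$ and sends $\om$ to $-\om$, so it preserves the splitting $\Om^2=\Lambda^+\oplus\Lambda^-$ and therefore $*^\om=R^\om*R^\om=*$.

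Substituting these into~\eqref{eq:HESSIAN1} collapses the variation:
$$
\Thetahat
=\frac{\rhohat+*^\om\rhohat}{u}-\Abs{\frac{\om^+}{u}}^2\rhohat
=\rhohat+*\rhohat-2\rhohat
=*\rhohat-\rhohat
$$
(equivalently $\Thetahat=-2\rhohat^-$). Hence
$$
\sH_\om(\rhohat)=\int_M\Thetahat\wedge\rhohat
=\int_M *\rhohat\wedge\rhohat-\int_M\rhohat\wedge\rhohat .
$$
I would finish with two elementary facts. The pointwise identity $*\rhohat\wedge\rhohat=\rhohat\wedge*\rhohat=\Abs{\rhohat}^2\dvol$ rewrites the first integral as $\int_M\Abs{\rhohat}^2\dvol$. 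For the second, $\rhohat\in T_\om\sS_a$ is exact, say $\rhohat=d\lambda$, so $\rhohat\wedge\rhohat=d(\lambda\wedge\rhohat)$ is itself exact and $\int_M\rhohat\wedge\rhohat=0$ by Stokes' theorem on the closed manifold $M$. Together these give $\sH_\om(\rhohat)=\int_M\Abs{\rhohat}^2\dvol$, which in particular is positive definite since it vanishes only when $\rhohat=0$.

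The one step I expect to carry the real weight is the normalisation $u\equiv1$, that is, the passage from mere self-duality ($*\om=\om$) to $g$-compatibility with constant pointwise norm $\Abs{\om}\equiv\sqrt2$. It is exactly this that produces the clean cancellation $\rhohat+*\rhohat-2\rhohat$; dropping it leaves only the weaker identity $\sH_\om(\rhohat)=\int_M 2u^{-1}\Abs{\rhohat^-}^2\dvol$. Although exactness of $\rhohat$ still yields the \emph{global} relation $\int_M\Abs{\rhohat^+}^2\dvol=\int_M\Abs{\rhohat^-}^2\dvol$ (again from $\int_M\rhohat\wedge\rhohat=0$), this cannot be upgraded to the required pointwise statement when $u$ is non-constant. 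I would therefore make the appeal to compatibility (Corollary~\ref{cor:DGF}(ii)) explicit at the outset to secure $u\equiv1$ and $\Abs{\om}^2\equiv2$, after which the two-line computation above completes the proof.
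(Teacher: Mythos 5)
Your proposal is correct and follows essentially the same route as the paper: the paper's own proof is the one-line remark that the claim follows from~\eqref{eq:HESSIAN1} with $u=1$, $\abs{\rho^+}^2=\abs{\om}^2=2$, and $g^\rho=g$, which are exactly the normalizations you establish (via compatibility) before substituting. The details you add --- the involution argument giving $*^\om=*$ on $2$-forms and the use of exactness of $\rhohat$ to kill $\int_M\rhohat\wedge\rhohat$ --- are precisely what the paper leaves implicit.
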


\begin{proof}
This follows from~\eqref{eq:HESSIAN1} with 
$u=1$, $\abs{\rho^+}^2=\abs{\om}^2=2$, and $g^\rho=g$. 
\end{proof}

\begin{theorem}[{\bf Donaldson}]\label{thm:DON}
Assume the hyperK\"ahler case and $a:=[\om_1]$.
%%%DAS - comma inserted
If $\rho\in\sS_a$ is a critical point of $\sE$ and $\rho\ne\om_1$,
then the Hessian $\sH_\rho$ is not positive definite.
\end{theorem}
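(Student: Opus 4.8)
The plan is to show that the quadratic form $\sH_\rho$ fails to be positive definite by exhibiting a nonzero exact $2$-form $\rhohat\in T_\rho\sS_a$ with $\sH_\rho(\rhohat)\le 0$. I would work from the closed formula for the Hessian in the hyperK\"ahler case, equation~\eqref{eq:HESSIAN2} of Theorem~\ref{thm:HESSIAN}, which holds at \emph{every} $\rho\in\sS_a$; criticality will be invoked only to locate the destabilizing direction.

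First I would rewrite~\eqref{eq:HESSIAN2} in terms of the splitting of $\rhohat$ into its self-dual and anti-self-dual parts with respect to $g^\rho$. By part~(iii) of Proposition~\ref{prop:grho} the forms $\om_i^\rho=R^\rho\om_i$ span the bundle of $g^\rho$-self-dual $2$-forms and satisfy $\om_i^\rho\wedge\om_j^\rho=2\delta_{ij}\dvol$, so writing $\rhohat=\rhohat^++\rhohat^-$ for the $g^\rho$-splitting gives $\sum_i\Khat_i^2=2\Abs{\rhohat^+}^2/u^2$ and $\rhohat\wedge\rhohat=(\Abs{\rhohat^+}^2-\Abs{\rhohat^-}^2)\dvol$, while $\sum_iK_i^2=2\Abs{\rho^+}^2/u^2$ by~\eqref{eq:urho}. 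Substituting these into~\eqref{eq:HESSIAN2} and using $2u=\Abs{\rho^+}^2-\Abs{\rho^-}^2$ collapses the formula to
\[
\sH_\rho(\rhohat)=\int_M\frac{\Abs{\rho^+}^2\Abs{\rhohat^-}^2-\Abs{\rho^-}^2\Abs{\rhohat^+}^2}{u^2}\,\dvol .
\]
As a check, when $\rho=\om$ is self-dual this reduces to $2\int_M\Abs{\rhohat^-}^2\dvol$, which matches Corollary~\ref{cor:MIN} once one invokes the exactness identity below.

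The second ingredient is the topological identity $\int_M\rhohat\wedge\rhohat=0$, valid because $\rhohat=d\lambda$ is exact on the closed manifold $M$; in the $g^\rho$-splitting this reads $\Norm{\rhohat^+}_{L^2}=\Norm{\rhohat^-}_{L^2}$. Thus $\sH_\rho$ is a competition between the two weighted integrals subject to an equal-mass constraint on $\rhohat^\pm$. Since $\rho\ne\om_1$ it is not self-dual (a self-dual symplectic form is harmonic, hence equals the unique harmonic representative $\om_1$ of $a$), so $\Abs{\rho^-}^2>0$ on a set of positive measure. The remaining task is to produce an exact $\rhohat$ whose self-dual part $\rhohat^+$ is concentrated where $\Abs{\rho^-}^2/u^2$ is large and whose anti-self-dual part $\rhohat^-$ is concentrated where $\Abs{\rho^+}^2/u^2$ is small; for such a form the displayed integral is negative. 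Here the hyperK\"ahler structure should enter through the three potentials $K_i$, the three self-dual forms $\om_i^\rho$, and the critical-point relation $\sum_i dK_i^\rho\circ J^\rho_i=0$ of Corollary~\ref{cor:DGF}(iii), which are the natural data from which to build $\lambda$.

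The main obstacle is precisely this construction. Pointwise the integrand cannot be made non-positive, because $\Abs{\rho^+}^2-\Abs{\rho^-}^2=2u>0$ everywhere; and for an exact form the self-dual and anti-self-dual parts are globally coupled through $\lambda$, so their supports cannot be prescribed independently. Hence the negativity of $\sH_\rho(\rhohat)$ must be a genuinely global effect, and the heart of the argument is a Hodge-theoretic construction showing that, as long as $\rho^-\not\equiv0$, the coupling still leaves enough freedom to unbalance the two weighted integrals. I expect the hyperK\"ahler hypothesis to be essential here: the equivalent description in the moment-map picture of Section~\ref{sec:MOMENT}, where $\sE=\tfrac12\sum_i\Norm{\mu_i}^2$ and a critical point satisfies $\sum_iJ_i\,df\circ X_{H_i}=0$ by~\eqref{eq:gradEf}, suggests that the destabilizing direction is obtained by rotating the infinitesimal actions $df\circ X_{H_i}$ by the complex structures $J_i$, and this is likely the cleanest route to carrying the construction out.
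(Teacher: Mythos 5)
Your reformulation of \eqref{eq:HESSIAN2} is correct and even elegant: using $\om_i^\rho\wedge\om_j^\rho=2\delta_{ij}\dvol$, the identity $2u=\Abs{\rho^+}^2-\Abs{\rho^-}^2$, and the $g^\rho$-splitting $\rhohat=\rhohat^++\rhohat^-$ one does obtain
\begin{equation*}
\sH_\rho(\rhohat)=\int_M\frac{\Abs{\rho^+}^2\Abs{\rhohat^-}^2-\Abs{\rho^-}^2\Abs{\rhohat^+}^2}{u^2}\,\dvol ,
\end{equation*}
and the constraint $\Norm{\rhohat^+}_{L^2}=\Norm{\rhohat^-}_{L^2}$ for exact $\rhohat$ is also right. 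But at this point your argument stops: as you say yourself, ``the main obstacle is precisely this construction,'' and no construction is given. This is not a small missing detail --- producing the destabilizing direction \emph{is} the content of the theorem. Your support-concentration heuristic carries no mechanism for overcoming the two difficulties you correctly identify (pointwise positivity bias since $2u>0$, and the global coupling of $\rhohat^\pm$ through $\lambda$), so what you have is a correct restatement of the problem, not a proof.

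The paper fills this gap by a different mechanism: instead of exhibiting one direction where the integral above is negative, it produces \emph{four} linearly independent exact forms whose Hessian values sum to zero, which equally rules out positive definiteness. Concretely, at a critical point one works with \eqref{eq:HESSIAN3} rather than \eqref{eq:HESSIAN2}. The operator $D$ of \eqref{eq:D}, $\lambda\mapsto(d*^\rho\lambda,d*^\rho(\lambda\circ J_i^\rho))/\dvol$, is Fredholm of index $b_1-4$ (Step~1); since $\rho\ne\om_1$ some $K_i^\rho$ is nonconstant (Step~2); criticality, via $\sum_i dK_i^\rho\circ J_i^\rho=0$, puts the nonconstant function $(0,K_1^\rho,K_2^\rho,K_3^\rho)$ in the cokernel of $D$, so $\dim\mathrm{coker}\,D>4$, hence $\dim\ker D>b_1$, and the quaternionic transversality argument of Lemma~\ref{le:H} yields $\lambda\in\ker D$ with $\rhohat_0=d\lambda$ and $\rhohat_j=d(\lambda\circ J_j^\rho)$ linearly independent (Step~3). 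For these directions all the terms $\Hhat_{ji}$ vanish, and the quaternion relations give $\sum_{j=0}^3\sH_\rho(\rhohat_j)=0$ (Step~4). Your closing instinct --- rotate something by the $J_i$'s --- does point the right way, but the object rotated is a kernel element of the auxiliary Fredholm operator $D$ (whose existence is the analytic heart of the proof, and which kills the $\Hhat_{ji}$ terms so that only the connection terms survive), and the punchline is a sum-to-zero identity, not a single integral made negative by concentrating supports.
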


\begin{proof}
The proof has four steps.

\medskip\noindent{\bf Step~1.}
{\it Let $\rho\in\Om^2_\ndg(M)$ and define 
$J^\rho_i$ by $\rho(J^\rho_i\cdot,\cdot):=\rho(\cdot,J_i\cdot)$
for $i=1,2,3$.  Then the first order differential operator
$D:\Om^1(M)\to\Om^0(M,\R^4)$ defined by
\begin{equation}\label{eq:D}
D\lambda:=\left(\frac{d*^\rho\lambda}{\dvol},
\frac{d*^\rho(\lambda\circ J_1^\rho)}{\dvol},
\frac{d*^\rho(\lambda\circ J_2^\rho)}{\dvol},
\frac{d*^\rho(\lambda\circ J_3^\rho)}{\dvol}
\right)
\end{equation}
for $\lambda\in\Om^1(M)$ is a Fredholm operator 
of Fredholm index $b_1-4$.}

\medskip\noindent
By part~(iv) of Proposition~\ref{prop:grho}
the $2$-forms 
$$
\om^\rho_i:=\om_i-\frac{\om_i\wedge\rho}{\dvol_\rho}\rho,\qquad
i=1,2,3,
$$
form a basis of the space of self-dual $2$-forms 
with respect to $g^\rho$ and they satisfy 
$\om_i^\rho(\cdot,J^\rho_i\cdot)=g^\rho$ for $i=1,2,3$.
Hence, for $\lambda\in\Om^1(M)$, twice the self-dual
part of $d\lambda$ with respect to $g^\rho$ is the $2$-form
$$
d\lambda+*^\rho d\lambda
= \sum_{i=1}^3\frac{\om_i^\rho\wedge d\lambda}{\dvol}\om_i^\rho.
$$
Hence the self-duality operator 
$$
\Om^1(M)\to\Om^0(M)\oplus\Om^{2,+}_{g^\rho}(M):
\lambda\mapsto(d^{*^\rho}\lambda,d\lambda+*^\rho d\lambda)
$$
of $g^\rho$ is isomorphic to the operator 
$D':\Om^1(M)\to\Om^0(M,\R^4)$ given by 
$$
D'\lambda 
:=\left(\frac{d*^\rho\lambda}{\dvol},
\frac{\om_1^\rho\wedge d\lambda}{\dvol},
\frac{\om_2^\rho\wedge d\lambda}{\dvol},
\frac{\om_3^\rho\wedge d\lambda}{\dvol}
\right).
$$
Hence $D'$ is a Fredholm operator of index $b_1-4$.
Since $\om_i^\rho\wedge\lambda = *^\rho(\lambda\circ J^\rho_i)$
by Lemma~\ref{le:gomJ},  we have 
$$
d*^\rho(\lambda\circ J_i^\rho)-\om_i^\rho\wedge d\lambda
= (d\om^\rho_i)\wedge\lambda.
$$
Hence $D-D'$ is a zeroth order 
operator and therefore is a compact operator between the 
appropriate Sobolev completions. Hence $D$ is a Fredholm 
operator of index $b_1-4$. This proves Step~1.

\medskip\noindent{\bf Step~2.}
{\it Let $\rho\in\sS_a\setminus\{\om_1\}$. 
Then at least one of the functions 
$$
K_i^\rho = \frac{\om_i\wedge\rho}{\dvol_\rho},
\qquad i=1,2,3,
$$
is nonconstant.}

\medskip\noindent
Suppose by contradiction that $K_i^\rho$ is constant 
for $i=1,2,3$.  Since $\rho-\om_1$ is exact, we have
$$
\int_MK_i^\rho\dvol_\rho
= \int_M\om_i\wedge\rho
= \int_M\om_i\wedge\om_1
= \left\{\begin{array}{ll}
2\Vol(M),&\mbox{if }i=1,\\
0,&\mbox{if }i=2,3,
\end{array}\right.
$$
and hence $K_1^\rho=2$ and $K_2^\rho=K_3^\rho=0$.
This implies 
$$
u_1=2u,\qquad u_2=u_3=0,\qquad u:=\frac{\dvol_\rho}{\dvol},\qquad
u_i:=\frac{\om_i\wedge\rho}{\dvol}.
$$
Hence it follows from~\eqref{eq:urho} that
\begin{equation}\label{eq:urho-}
\begin{split}
0 
&\le \abs{\rho^-}^2 \\
&= \abs{\rho^+}^2-2u \\
&= \frac12\sum_{i=1}^3u_i^2 - 2u  \\
&= 2u(u-1).
\end{split}
\end{equation}
Hence $u\ge 1$ and
$$
\int_Mu\dvol =\int_M\dvol_\rho=\int_M\dvol=\Vol(M).
$$
This shows that $u\equiv1$, hence $\rho^-=0$ 
by~\eqref{eq:urho-}, and therefore $\rho=\om_1$. 
This proves Step~2. 

\medskip\noindent{\bf Step~3.}
{\it Let $\rho\in\sS_a\setminus\{\om_1\}$ be a 
critical point of $\sE$. Then there exists a 
$1$-form $\lambda\in\Om^1(M)$ such that
\begin{equation}\label{eq:d*la}
d*^\rho\lambda = 0,\qquad
d*^\rho(\lambda\circ J^\rho_j)=0,\qquad j=1,2,3
\end{equation}
and the exact $2$-forms 
\begin{equation}\label{eq:dla}
\rhohat_0:=d\lambda,\qquad
\rhohat_j:=d(\lambda\circ J^\rho_j),\qquad j=1,2,3.
\end{equation}
are linearly independent.}

\medskip\noindent
By part~(iii) of Corollary~\ref{cor:DGF}, we have 
$$
\sum_{i=1}^3dK_i^\rho\circ J^\rho_i=0.
$$ 
Hence the function 
$$
h:=(0,K_1^\rho,K_2^\rho,K_3^\rho):M\to\R^4\
$$ 
is $L^2$ orthogonal to the image of the operator 
$$
D:\Om^1(M)\to\Om^0(M,\R^4)
$$ 
in Step~1, i.e.\ 
$$
\inner{h}{D\lambda}_{L^2}
= - \int_M\sum_{i=1}^3 dK_i^\rho\wedge *^\rho(\lambda\circ J^\rho_i)
= \int_M \sum_{i=1}^3(dK_i^\rho\circ J^\rho_i)\wedge *^\rho\lambda
= 0
$$
for all $\lambda\in\Om^1(M)$. Since $h$ is nonconstant by Step~2, 
this shows that the cokernel of $D$ has dimension greater than four.
Since $D$ is a Fredholm operator of index $b_1-4$ by Step~1,
its kernel has dimension greater than~$b_1$. The kernel 
of $D$ is a quaternionic vector space and each nonzero element 
$\lambda\in\ker\,D$ determines a four-dimensional quaternionic
subspace 
$$
V_\lambda := \mathrm{span}\left\{\lambda,\lambda\circ J^\rho_1,
\lambda\circ J^\rho_2,\lambda\circ J^\rho_3\right\}\subset\ker\,D.
$$
Denote by 
$$
H^1_{g^\rho}(M):=\left\{\lambda\in\Om^1(M)\,|\,
d\lambda=0,\,d*^\rho\lambda=0\right\}
$$
the space of harmonic $1$-forms with respect to $g^\rho$. 
This space has dimension zero when $M$ is a $K3$-surface
and dimension four when $M$ is a four-torus. Since $\ker\,D$
is a quaternionic vector space of dimension $4k$ with $k\ge2$,
it has a four-dimensional quaternionic subspace that is transverse
to $H^1_{g^\rho}(M)$.  Thus there exists a nonzero element 
$\lambda\in\ker\,D$ such that $V_\lambda\cap H^1_{g^\rho}(M)=0$.
(See Lemma~\ref{le:H}.) This proves Step~3. 

\bigbreak

\medskip\noindent{\bf Step~4.}
{\it Let $\rho\in\sS_a\setminus\{\om_1\}$ be a 
critical point of $\sE$ and let $\lambda\in\Om^1(M)$ 
and~$\rhohat_j$ for $j=0,1,2,3$ be as in Step~3. 
Then} 
$$
\sum_{j=0}^3\sH_\rho(\rhohat_j)=0.
$$

\medskip\noindent
Choose $X\in\Vect(M)$ such that $\iota(X)\rho=-\lambda$.
Then
$$
\rhohat_0 = -d\iota(X)\rho,\qquad
\rhohat_j = -d(\iota(X)\rho\circ J^\rho_j) 
= -d\iota(J_jX)\rho,\qquad j=1,2,3.
$$
For $i,j=1,2,3$ define 
$$
\Hhat_{0i} := \frac{(d\iota(X)\om_i)\wedge\rho}{\dvol_\rho},\qquad
\Hhat_{ji} := \frac{(d\iota(J_jX)\om_i)\wedge\rho}{\dvol_\rho}.
$$
By equation~\eqref{eq:omrhoX} in Lemma~\ref{le:HESSIAN1}, we have 
$$
(d\iota(Y)\om_i)\wedge\rho
= -d(\om_i^\rho\wedge\iota(Y)\rho)
= -d*^\rho((\iota(Y)\rho)\circ J^\rho_i)
$$
for every vector field $Y\in\Vect(M)$. 
Apply this formula to the vector fields $Y=X$ 
and $Y=J_jX$ and use Step~3 to obtain 
$\Hhat_{ji}=0$ for $j=0,1,2,3$ and $i=1,2,3$.  
Hence, by equation~\eqref{eq:HESSIAN3} 
in Theorem~\ref{thm:HESSIAN}, we have 
\begin{equation*}
\begin{split}
\cH_\rho(\rhohat_0) 
&=\sum_{i=1}^3\int_M\om_i(X,\Nabla{X_{K_i^\rho}}X)\dvol_\rho,\\
\cH_\rho(\rhohat_j)
&=\sum_{i=1}^3\int_M\om_i(J_jX,\Nabla{X_{K_i^\rho}}(J_jX))\dvol_\rho
\end{split}
\end{equation*}
for $j=1,2,3$.  Hence
\begin{equation*}
\begin{split}
\sum_{j=1}^3\cH_\rho(\rhohat_j)
&=
\sum_{i,j=1}^3\int_M\inner{X}{J_jJ_iJ_j\Nabla{X_{K_i^\rho}}X}\dvol_\rho \\
&=
\sum_{i=1}^3\int_M\inner{X}{J_i\Nabla{X_{K_i^\rho}}X}\dvol_\rho \\
&=
-\sH_\rho(\rhohat_0)
\end{split}
\end{equation*}
This proves Step~4 and Theorem~\ref{thm:DON}.
\end{proof}

Theorem~\ref{thm:DON} is an infinite-dimensional analogue of a general 
observation about finite-dimensional hyperK\"ahler moment maps.
Let $(M,\om_i,J_i)$ be a hyperK\"ahler manifold,
let~$\G$ be a compact Lie group that acts on~$M$ by hyperK\"ahler
isometries, and for $x\in M$ let $L_x:\fg\to T_xM$ denote the infinitesimal
action of the Lie algebra $\fg=\Lie(\G)$.  Suppose $\fg$ is equipped
with an invariant inner product and the group action is Hamiltonian for 
each~$\om_i$. For $i=1,2,3$ let $\mu_i:M\to\fg$ be an equivariant moment map 
so that $\inner{d\mu_i(x)\xhat}{\xi}=\om_i(L_x\xi,\xhat)$ for $\xi\in\fg$ and
$\xhat\in T_xM$.  Then the gradient of the function
$\cE:=\frac12\sum_i\Norm{\mu_i}^2$ is given by 
$\grad\cE(x)=\sum_iJ_iL_x\mu_i(x)$. Assume $\dim M\ge4\dim\G$
and let $x\in M$ be a critical point of $\cE$ such that $\cE(x)\ne0$.
Then the linear map $\fg^4\to T_xM:(\xi_0,\xi_1,\xi_2,\xi_3)\mapsto L_x\xi_0+\sum_iJ_iL_x\xi_i$
is not injective and hence not surjective. Thus there exists a vector $\xhat\in T_xM$
such that $L_x^*\xhat=0$ and $L_x^*J_i\xhat=0$ for all $i$.
Denote by $\cH_x:T_xM\to\R$ the Hessian of $\cE$ at $x$.
Then a calculation shows that 
$\cH_x(\xhat)+\sum_i\cH_x(J_i\xhat)=0$.
(See Donaldson~\cite[Proposition~6]{DON1}.)   
In the case at hand it would be interesting to find an 
exact $2$-form~$\rhohat$ such that $\cH_\rho(\rhohat)<0$. 

%%%%%%%%%%%%%%%%%%%%%%%%%%%%%%%%%%%%%%%%%%%%%%%%%%
%%%%%%%%%%%%%%%%%%%%%%%%%%%%%%%%%%%%%%%%%%%%%%%%%%
%%%%%%%%%%%%%%%%%%%%%%%%%%%%%%%%%%%%%%%%%%%%%%%%%%
%%%%%%%%%%%%%%%%%%%%%  APPENDIX A  %%%%%%%%%%%%%%%
%%%%%%%%%%%%%%%%%%%%%%%%%%%%%%%%%%%%%%%%%%%%%%%%%%
%%%%%%%%%%%%%%%%%%%%%%%%%%%%%%%%%%%%%%%%%%%%%%%%%%
%%%%%%%%%%%%%%%%%%%%%%%%%%%%%%%%%%%%%%%%%%%%%%%%%%

\appendix

\section{Four-Dimensional Linear Algebra}\label{app:FOUR}

Let $V$ be a $4$-dimensional oriented real vector space
and let $V^*:=\Hom(V,\R)$ be the dual space.  Associated to an
inner product $g:V\times V\to\R$ is the Hodge $*$-operator
$*_g:\Lambda^kV^*\to\Lambda^{4-k}V^*$, the volume form 
${\dvol_g=*_g1\in\Lambda^4V^*}$, and the space 
$$
\Lambda_g^+:=\left\{\om\in\Lambda^2V^*\,\big|\,\om=*_g\om\right\}
$$ 
of self-dual $2$-forms.  
By a well known observation (which we learned from~\cite{DON2}) 
the inner product~$g$ is uniquely determined by $\dvol_g$ 
and $\Lambda^+_g$.  
This is the content of Theorem~\ref{thm:DONfour} below.
Call a linear subspace $\Lambda\subset\Lambda^2V^*$ 
{\bf positive} if the quadratic form 
$\Lambda\times\Lambda\to\R:(\om,\tau)\mapsto\frac{\om\wedge\tau}{\dvol}$
is positive definite for some (and hence every) positive 
volume form $\dvol\in\Lambda^4V^*$.  Denote by 
%%%DAS - typo
%$\cG(V)$ the space of all inner products $g:V\times V\to V$,
$\cG(V)$ the space of all inner products $g:V\times V\to\R$,
%%%DAS - end
by $\cS(V)$ the space of $2$-forms $\rho\in\Lambda^2V^*$ 
such that $\rho\wedge\rho>0$, and by $\cJ(V)$
the set of linear complex structures $J:V\to V$ that 
are compatible with the orientation.

\begin{theorem}\label{thm:DONfour}
For every positive volume form $\dvol\in\Lambda^4V^*$
and every three-dimensional positive linear subspace 
$\Lambda^+\subset\Lambda^2V^*$ there exists a unique 
inner product $g$ on $V$ such that $\dvol_g=\dvol$
and $\Lambda^+_g=\Lambda^+$.
\end{theorem}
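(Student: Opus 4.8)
The plan is to treat existence and uniqueness together by reducing the problem to a normal form. The background structure is the symmetric bilinear form $Q(\om,\tau):=\frac{\om\wedge\tau}{\dvol}$ on the six-dimensional space $\Lambda^2V^*$, which has signature $(3,3)$; the ``positive'' subspaces of the statement are exactly the maximal positive subspaces for $Q$. For any inner product $g$ with $\dvol_g=\dvol$ one checks, using $\inner{\om}{\tau}_g\,\dvol=\om\wedge *_g\tau$, that $\Lambda^+_g$ is $Q$-positive, that $\Lambda^-_g=(\Lambda^+_g)^{\perp_Q}$, and that $*_g$ on $\Lambda^2V^*$ is the $Q$-self-adjoint involution which is $+\id$ on $\Lambda^+_g$ and $-\id$ on its $Q$-complement. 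Thus $g$ determines the pair $(\dvol,\Lambda^+_g)$, and the whole content is that this assignment is a bijection onto $\{\dvol\}\times\{\text{positive }3\text{-planes}\}$.

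For existence I would fix a $Q$-orthonormal basis $\om_1,\om_2,\om_3$ of $\Lambda^+$, normalized by $\om_i\wedge\om_j=2\delta_{ij}\,\dvol$, and produce an oriented coframe $e^1,e^2,e^3,e^4$ with $e^1\wedge e^2\wedge e^3\wedge e^4=\dvol$ that realizes the $\om_i$ as the standard self-dual triple $e^1\wedge e^2+e^3\wedge e^4$, $e^1\wedge e^3+e^4\wedge e^2$, $e^1\wedge e^4+e^2\wedge e^3$. Setting $g:=\sum_i(e^i)^2$ then gives an inner product with $\dvol_g=\dvol$ and $\Lambda^+_g=\Lambda^+$ by construction. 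To build the coframe I would use that $\om_1$ is nondegenerate (since $\om_1\wedge\om_1=2\dvol\neq0$) to define endomorphisms $A:=\om_1^{-1}\om_2$ and $B:=\om_1^{-1}\om_3$ of $V$ (the forms being read as isomorphisms $V\to V^*$), and show that the relations $\om_i\wedge\om_j=2\delta_{ij}\dvol$ force the quaternion identities $A^2=B^2=-\id$ and $AB=-BA$. Then $(A,B,AB)$ is a hyperK\"ahler triple, $g:=\om_1(\cdot,AB\,\cdot)$ is the desired metric, and a Gram--Schmidt step produces the coframe.

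The hard part will be this linear-algebra step, and in particular getting the sign $A^2=-\id$ rather than $A^2=+\id$. Translating $\om_2(u,v)=\om_1(Au,v)$ into Pfaffian identities expresses $\om_1\wedge\om_2$ through $\operatorname{tr}A$ and $\om_2\wedge\om_2$ through the characteristic polynomial of the $\om_1$-self-adjoint operator $A$, so that $Q$-orthonormality becomes trace and determinant constraints; but these pin the characteristic polynomial down only to $(t^2+1)^2$, and one must still exclude non-semisimple or split forms. This is exactly where the \emph{positivity} of $\Lambda^+$ must enter: a negative $3$-plane would produce $A^2=+\id$ and an indefinite form $\om_1(\cdot,AB\,\cdot)$, so the argument cannot rely on nondegeneracy alone. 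I expect to close this by using the full coupled system in $A$ and $B$ together with the positive-definiteness of $Q|_{\Lambda^+}$ to force $A,B$ to be genuine anticommuting complex structures and $g$ to be positive definite.

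For uniqueness I would observe that the existence construction exhibits $g$ as an explicit function of $(\dvol,\Lambda^+)$ alone: any solution $g$ has a $g$-orthonormal self-dual triple $\om_1,\om_2,\om_3$ obeying $\om_i\wedge\om_j=2\delta_{ij}\dvol$, and feeding this triple into the formula $g=\om_1(\cdot,AB\,\cdot)$ returns $g$ itself; since the triple is determined by $\Lambda^+$ up to the $\mathrm{SO}(3)$ rotation induced by $\mathrm{SO}(4)$, under which the recovered metric is unchanged, any two solutions coincide. Equivalently, the realizing coframe is unique up to $\mathrm{SO}(4)$, which is precisely the ambiguity that leaves $g$ fixed. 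As a structural check on the whole statement, it is the orbit description of the classical local isomorphism $\mathfrak{sl}(4,\R)\cong\mathfrak{so}(3,3)$: inner products with volume $\dvol$ form $\mathrm{SL}(4,\R)/\mathrm{SO}(4)$ and the positive $3$-planes form an $\mathrm{SO}(3,3)$-orbit, both of dimension $9$, with $g\mapsto\Lambda^+_g$ the equivariant bijection between them.
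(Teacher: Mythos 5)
Your plan follows the same architecture as the paper's actual proof: fix a basis $\om_1,\om_2,\om_3$ of $\Lambda^+$ normalized by $\om_i\wedge\om_j=2\delta_{ij}\dvol$, read ratios of these forms as endomorphisms of $V$, establish the quaternion relations, and define $g$ by pairing one form with the product of the resulting complex structures. But the step you yourself flag as ``the hard part'' and leave as an expectation --- deducing $A^2=B^2=-\id$, $AB=-BA$, and the definiteness of the symmetric form from the wedge conditions --- is precisely the mathematical content of the theorem; it is the paper's Lemma~\ref{le:quaternion}, and your proposal contains no proof of it. Your intended route via the characteristic polynomial of the single operator $A$ is, as you note, insufficient (it cannot exclude non-semisimple or split cases), and the ``full coupled system'' you say you expect to use is exactly what the paper's lemma carries out: from the contraction identity $\iota(J_jv)\om_i=\iota(v)\om_k$ and the orthogonality conditions one first gets $\om_k(v,J_jw)=-\om_i(v,w)$, then $J_3J_2J_1=\one=-J_1J_2J_3$, then $J_i^2=-\one$, and only then the anticommutation relations --- a chain of identities coupling all three forms, with no characteristic polynomials involved. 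Definiteness of $g$ then comes from the further parts of that lemma (symmetry and nonvanishing of $\om_i(v,J_iv)$), not from a separate positivity argument. So what you have is a correct outline with the decisive step missing.

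A second, related problem is that your diagnosis of where positivity enters is wrong, which undermines the way you propose to close the gap. A negative $3$-plane does \emph{not} produce $A^2=+\id$: the anti-self-dual forms of the standard metric on $\H$ form a negative $3$-plane, and the ratios of the standard anti-self-dual basis are honest anticommuting complex structures (right quaternion multiplications); what fails there is only that the constructed metric has volume form $-\dvol$. Split behavior $A^2=+\id$ occurs for \emph{indefinite} $3$-planes, where no basis with $\om_i\wedge\om_j=\pm2\delta_{ij}\dvol$ exists at all. In other words, positivity is used exactly twice: once to produce the normalized basis (Gram--Schmidt for $Q|_{\Lambda^+}$), after which the quaternion relations follow from the normalization alone, and once to get the correct orientation $\dvol_g=+\dvol$. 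Finally, your uniqueness argument quietly assumes that the construction is independent of the choice of normalized basis; this $O(3)$-invariance itself requires the quaternion relations plus a sign normalization, so as stated it is circular. The paper avoids this entirely: given a second solution $\tg$, the same forms $\om_i$ are compatible with $\tg$ via complex structures $\tJ_i$, the relations force $\tJ_j\tJ_k=J_i$ and (by Lemma~\ref{le:J}) $\tJ_3=\pm J_1J_2=\pm J_3$, the sign is fixed by positivity of $\om_3(v,\tJ_3v)$, and hence $\tg=\om_3(\cdot,J_3\cdot)=g$.
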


\begin{proof}
See page~\pageref{proof:DONfour}.
\end{proof}

\begin{theorem}\label{thm:FOUR}
Let $g\in\cG(V)$, $\rho\in\cS(V)$, define $u>0$ and $A\in\GL(V)$ by
\begin{equation}\label{eq:uA}
u:= \frac{\dvol_\rho}{\dvol_g},\qquad
\dvol_\rho := \frac{\rho\wedge\rho}{2},\qquad
g(A\cdot,\cdot):=\rho,
\end{equation}
and define the linear map $R:\Lambda^2V^*\to\Lambda^2V^*$ by
\begin{equation}\label{eq:R}
R\om:=\om - \frac{\om\wedge\rho}{\dvol_\rho}\rho
\qquad\mbox{for all }\om\in\Lambda^2V^*.
\end{equation}
Then $R$ is an involution that preserves the exterior product, 
acts as the identity on the orthogonal complement of $\rho$ 
with respect to the exterior product, and $R\rho=-\rho$. 
Moreover, for every $\tg\in\cG(V)$, the following are equivalent.

\smallskip\noindent{\bf (i)}
$\tg(v,w)=u^{-1}g(Av,Aw)$ for all $v,w\in V$.

\smallskip\noindent{\bf (ii)}
$\dvol_\tg=\dvol_g$ and 
$*_\tg\lambda = u^{-1}\rho\wedge *_g(\rho\wedge\lambda)$
for all $\lambda\in V^*$.

\smallskip\noindent{\bf (iii)}
$\dvol_\tg=\dvol_g$ and 
$*_\tg \iota(v)\rho = -\rho\wedge g(v,\cdot)$
for all $v\in V$.

\smallskip\noindent{\bf (iv)}
Suppose $\om\in\cS(V)$ and $J\in\cJ(V)$ satisfy $g=\om(\cdot,J\cdot)$.
Define $\tom\in\Lambda^2V^*$ and $\tJ\in\cJ(V)$ by 
$\tom := R\om$ and $\rho(\tJ\cdot,\cdot):=\rho(\cdot,J\cdot)$.
Then $\tg=\tom(\cdot,\tJ\cdot)$.

\smallskip\noindent{\bf (v)}
$\dvol_\tg=\dvol_g$ and $\Lambda^+_\tg = R\Lambda^+_g$.

\smallskip\noindent{\bf (vi)}
$\dvol_\tg=\dvol_g$ and $*_\tg\om = R*_gR\om$ 
for all $\om\in\Lambda^2V^*$.
\end{theorem}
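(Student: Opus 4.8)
The plan is to dispose of the assertions about $R$ by direct computation and then reduce all six equivalences to condition~(v) by exhibiting a single candidate metric and invoking Theorem~\ref{thm:DONfour}. First I would verify the properties of the involution $R$ in~\eqref{eq:R}. Since $\rho\wedge\rho=2\dvol_\rho$, one has $R\rho=\rho-\frac{\rho\wedge\rho}{\dvol_\rho}\rho=-\rho$, and $R\om=\om$ whenever $\om\wedge\rho=0$, which is exactly the statement that $R$ fixes the exterior-orthogonal complement of $\rho$. The involution property follows from the computation $R\om\wedge\rho=-\om\wedge\rho$ substituted into the definition, and preservation of the exterior product follows by expanding $R\om\wedge R\tau$ and collecting the resulting multiples of $\dvol_\rho$, all of which cancel to leave $\om\wedge\tau$. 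These are routine.

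For the equivalences I would introduce the explicit metric $\tg_0(v,w):=u^{-1}g(Av,Aw)$ from~(i). As $A$ is $g$-skew-adjoint (because $g(A\cdot,\cdot)=\rho$ is antisymmetric), one has $\tg_0=-u^{-1}g(A^2\cdot,\cdot)$, which is symmetric and positive definite. The key point is that each of the six conditions, read as a statement about an arbitrary $\tg\in\cG(V)$, determines $\tg$ uniquely: (i) is a formula for $\tg$; (ii) and (iii) are formulas for $*_\tg$ on $1$-forms which, together with $\dvol_\tg=\dvol_g$, recover the inner product on $V^*$ through $\alpha\wedge *_\tg\beta=\langle\alpha,\beta\rangle_\tg\dvol_\tg$; (iv) fixes $\tg$ from one compatible pair $(\om,J)$; and (v)/(vi) fix $\tg$ by Theorem~\ref{thm:DONfour}, since a metric with prescribed volume form is determined by its self-dual plane. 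Hence it suffices to show that $\tg_0$ itself satisfies (ii)--(vi), and then TFAE is immediate.

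To run the verifications I would first note $\dvol_{\tg_0}=\dvol_g$, since $\det(-u^{-1}A^2)=u^{-4}(\det A)^2=1$ using the Pfaffian identity $\det A=u^2$. For (v) and (vi) the argument is invariant: $R*_gR$ is an involution of $\Lambda^2V^*$ with $(\pm1)$-eigenspaces $R\Lambda^\pm_g$, and since $R$ preserves both the wedge pairing and positivity, $R\Lambda^+_g$ is a positive $3$-plane; the single identity $\Lambda^+_{\tg_0}=R\Lambda^+_g$ then yields (v) and, on $\Lambda^2$, the equality $*_{\tg_0}=R*_gR$, which is (vi). The remaining identities, namely (ii) and (iii) for $*_{\tg_0}$ on $1$-forms, the self-duality claim in (v), and the compatibility (iv), are finite computations I would carry out in a $g$-orthonormal oriented basis adapted to $\rho$, in which $\rho=a\,e^1\wedge e^2+b\,e^3\wedge e^4$ with $a,b>0$, $u=ab$, and $\tg_0=\frac{a}{b}\bigl((e^1)^2+(e^2)^2\bigr)+\frac{b}{a}\bigl((e^3)^2+(e^4)^2\bigr)$; there each side reduces to an explicit form and the equalities can be read off. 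The equivalence (ii)$\Leftrightarrow$(iii) can alternatively be obtained invariantly by substituting $\lambda=\iota(v)\rho$, which ranges over all of $V^*$ as $v$ ranges over $V$ because $A$ is invertible.

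The conceptual content is light, as everything funnels through Theorem~\ref{thm:DONfour}; the real work, and the main obstacle, is to confirm that the $1$-form picture governing (ii)/(iii) (through $\rho\wedge *_g(\rho\wedge\cdot)$ and through $\iota(v)\rho$) and the $2$-form picture governing (v)/(vi) (through $R$) are realized by one and the same metric $\tg_0$. The adapted basis makes this transparent, but one must track the $\sqrt{a/b}$ rescalings between the $g$- and $\tg_0$-orthonormal frames, and for (iv) check that $\tom=R\om$ together with $\tJ$ defined by $\rho(\tJ\cdot,\cdot)=\rho(\cdot,J\cdot)$ (equivalently $\tJ=-A^{-1}JA$) reproduces $\tg_0$.
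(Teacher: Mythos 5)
Your proposal is correct, but it takes a genuinely different route from the paper. The paper proves the closed cycle (i)$\Rightarrow$(ii)$\Rightarrow$(iii)$\Rightarrow$(iv)$\Rightarrow$(v)$\Rightarrow$(vi)$\Rightarrow$(i) by invariant computations resting on Lemmas~\ref{le:ivdvol}, \ref{le:gomJ}, \ref{le:om*} and~\ref{le:det}, and invokes Theorem~\ref{thm:DONfour} exactly once, to close the cycle: any $\tg$ satisfying (vi) and the candidate $h(v,w):=u^{-1}g(Av,Aw)$ both satisfy (vi), hence share volume form and self-dual plane, hence coincide. You instead run a hub-and-spoke argument: each condition admits at most one solution $\tg$ --- for (ii)/(iii) because the Hodge star on $1$-forms plus the volume form recovers the inner product via $\alpha\wedge *_\tg\beta=\langle\alpha,\beta\rangle_\tg\,\dvol_\tg$ (with $\lambda=\iota(v)\rho$ exhausting $V^*$ by nondegeneracy), for (iv) because it is an explicit formula, and for (v)/(vi) by Theorem~\ref{thm:DONfour} --- and then the single candidate $\tg_0$ from (i) is checked to satisfy (ii)--(vi), largely in the normal form $\rho=a\,e^1\wedge e^2+b\,e^3\wedge e^4$, $u=ab$. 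Both routes are sound, and both use $\det A=u^2$. What the paper's route buys is that every implication is basis-free and the intermediate identities are reused elsewhere (the computation behind (iii)$\Rightarrow$(iv) reappears in Lemma~\ref{le:HESSIAN1} and in Step~4 of Theorem~\ref{thm:DON}); what your route buys is a transparent logical skeleton and verifications that reduce to finite coordinate checks. One point in your plan needs care: condition (iv) is quantified over \emph{all} $g$-compatible pairs $(\om,J)$, which in your adapted basis form a $2$-sphere $\om=\sum_i t_i\om_i$, $\sum_i t_i^2=1$; so the verification that $\tom(\cdot,\tJ\cdot)=\tg_0$ with $\tJ=-A^{-1}JA$ must be carried out with $(t_1,t_2,t_3)$ left free (or derived invariantly from your already-verified (iii) via Lemma~\ref{le:gomJ}, as the paper does), not merely for the standard pair. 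With that reading, your argument is complete.
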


\begin{proof}
See page~\pageref{proof:FOUR}.
\end{proof}

\bigbreak

The proofs of both theorems are based 
on the following six lemmas.

\begin{lemma}\label{le:ivdvol}
For every $g\in\cG(V)$ and every $v\in V$
$$
*_g\iota(v)\dvol_g = - g(v,\cdot),\qquad
*_gg(v,\cdot) = \iota(v)\dvol_g.
$$
\end{lemma}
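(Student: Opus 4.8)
The plan is to verify both identities in Lemma~\ref{le:ivdvol} by a direct computation in a carefully chosen orthonormal basis, which trivializes the Hodge star. Concretely, I would fix a positively oriented orthonormal basis $e_1,e_2,e_3,e_4$ of $V$ with dual basis $e^1,e^2,e^3,e^4$, so that $\dvol_g=e^1\wedge e^2\wedge e^3\wedge e^4$ and the Hodge star acts on basis $k$-forms by the familiar rule $*_g(e^{i_1}\wedge\cdots\wedge e^{i_k})=\pm\,e^{j_1}\wedge\cdots\wedge e^{j_{4-k}}$, where $(i_1,\dots,i_k,j_1,\dots,j_{4-k})$ is an even/odd permutation of $(1,2,3,4)$. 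Since both identities are linear in $v$, it suffices to establish them for each basis vector $v=e_\ell$, and by the symmetry of the setup it is enough to check a single representative, say $\ell=1$, with the others following by the same pattern.

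For the first identity, I would compute $\iota(e_1)\dvol_g=\iota(e_1)(e^1\wedge e^2\wedge e^3\wedge e^4)=e^2\wedge e^3\wedge e^4$, then apply $*_g$ to this $3$-form. The sign works out so that $*_g(e^2\wedge e^3\wedge e^4)=-e^1=-g(e_1,\cdot)$, giving $*_g\iota(e_1)\dvol_g=-g(e_1,\cdot)$ as claimed. For the second identity I would start from $g(e_1,\cdot)=e^1$ and compute $*_g e^1=e^2\wedge e^3\wedge e^4=\iota(e_1)\dvol_g$, which is exactly the desired relation. The two identities are in fact equivalent: applying $*_g$ to the first and using $*_g^2=\id$ on $3$-forms (in dimension four $*_g^2=(-1)^{k(4-k)}=+1$ on every degree) recovers the second, so I could alternatively derive one from the other rather than computing both directly.

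The only genuine subtlety — and the step most prone to sign errors — is the bookkeeping of orientations in the Hodge star and the interior product. The interior product $\iota(e_1)$ removes the first factor with a $+$ sign because $e^1$ already sits in the leftmost position of $\dvol_g$; had I contracted against $e_2$ I would pick up a sign from commuting $e^2$ past $e^1$. Likewise the Hodge star sign for $*_g(e^2\wedge e^3\wedge e^4)$ depends on the parity of the permutation $(2,3,4,1)$, which is odd, producing the crucial minus sign in the first identity. I would therefore be careful to fix the convention $\om\wedge *_g\tau=\inner{\om}{\tau}\dvol_g$ (or whichever normalization the paper uses) at the outset and check the representative case against it, so that the general formula for arbitrary $v=\sum_\ell v_\ell e_\ell$ then follows purely by linearity. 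No deeper machinery is needed; this is a foundational lemma whose entire content is getting the four-dimensional signs right.
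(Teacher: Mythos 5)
Your main computation is correct and is exactly the paper's proof: the paper disposes of this lemma with the single line ``direct verification for the standard structures on $V=\R^4$,'' and your orthonormal-basis calculation---with the odd permutation $(2,3,4,1)$ producing the crucial minus sign in the first identity---is that verification spelled out. The reduction to $v=e_1$ by linearity and orientation-preserving relabeling of the basis is also sound.

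However, your parenthetical shortcut contains a genuine sign error: in dimension four, $*_g^2=(-1)^{k(4-k)}$ equals $+1$ only for $k=0,2,4$; for $k=1$ and $k=3$ the exponent is $k(4-k)=3$, so $*_g^2=-\id$ on $1$-forms and on $3$-forms. Indeed your own computations exhibit this: $*_g e^1=e^2\wedge e^3\wedge e^4$ and $*_g(e^2\wedge e^3\wedge e^4)=-e^1$, whose composition is $-\id$. If $*_g^2$ were $+\id$ on $3$-forms as you claim, then applying $*_g$ to the first identity would yield $*_g g(v,\cdot)=-\iota(v)\dvol_g$, the \emph{opposite} of the second identity; the two identities are equivalent under $*_g$ precisely because $*_g^2=-\id$ in odd degree, with that minus sign cancelling the minus sign in the first identity. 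Since you verified both identities directly, your proof as a whole stands, but the ``alternative'' derivation of one identity from the other, as stated, would produce the wrong sign and should be corrected before anyone relies on it.
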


\begin{proof}
Direct verification for the standard structures on $V=\R^4$.
\end{proof}

\begin{lemma}\label{le:gomJ}
Let $\om\in\cS(V)$, $g\in\cG(V)$, $J\in\cJ(V)$.
The following are equivalent.

\smallskip\noindent{\bf (i)}
$\om(v,Jw)=g(v,w)$ for all $v,w\in V$.

\smallskip\noindent{\bf (ii)}
$\dvol_\om=\dvol_g$ and 
$*_g(\om\wedge\lambda) = - \lambda\circ J$
for all $\lambda\in V^*$.
\end{lemma}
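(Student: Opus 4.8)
The plan is to treat this as pointwise four-dimensional linear algebra and to run both implications through a single workhorse, the contraction identity
\[
*_g(\om\wedge\lambda) = \iota(\lambda^\sharp)\,(*_g\om)
\qquad\text{for }\om\in\Lambda^2V^*,\ \lambda\in V^*,
\]
where $\lambda^\sharp\in V$ is the $g$-dual vector of $\lambda$, i.e.\ $g(\lambda^\sharp,\cdot)=\lambda$. I would record this identity first: it is a direct computation in a $g$-orthonormal basis, and its $0$-form case $\iota(v)\dvol_g=*_gg(v,\cdot)$ is exactly Lemma~\ref{le:ivdvol}. The second ingredient is the elementary fact that the compatible form of a Hermitian pair is self-dual, which I would verify once in an adapted basis: if $J$ is $g$-orthogonal and $e_1,e_2=Je_1,e_3,e_4=Je_3$ is an oriented $g$-orthonormal basis, then $-g(\cdot,J\cdot)=e^1\wedge e^2+e^3\wedge e^4$, so it is self-dual and its square is $2\dvol_g$.

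For (i)$\Rightarrow$(ii) I would first rewrite (i): substituting $w\mapsto -Jw$ in $\om(v,Jw)=g(v,w)$ and using $-J^2=\id$ gives $\om=-g(\cdot,J\cdot)$. Since $\om$ is antisymmetric and $g$ symmetric, this forces $J$ to be $g$-orthogonal, so the adapted-basis computation above applies: $\om$ is self-dual and $\dvol_\om=\tfrac12\om\wedge\om=\dvol_g$, which is the volume half of (ii). For the Hodge-star half I apply the contraction identity with $*_g\om=\om$ and $\lambda=g(v,\cdot)$, obtaining $*_g(\om\wedge\lambda)=\iota(v)\om=\om(v,\cdot)=-g(v,J\cdot)=-\lambda\circ J$.

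For the converse (ii)$\Rightarrow$(i) I use the contraction identity to convert the Hodge-star condition into $\iota(v)(*_g\om)=-g(v,J\cdot)$ for every $v$, that is, $(*_g\om)(v,w)=-g(v,Jw)$ for all $v,w\in V$. The left-hand side is antisymmetric, so $g(v,Jw)=-g(w,Jv)$, and symmetry of $g$ then yields that $J$ is $g$-orthogonal. Consequently $\eta:=-g(\cdot,J\cdot)$ is a genuine $2$-form — the compatible form of $(g,J)$ — hence self-dual by the basis computation, and we have shown $*_g\om=\eta$. Applying $*_g$ once more and using $*_g*_g=\id$ on $\Lambda^2V^*$ gives $\om=*_g\eta=\eta$, whence $\om(v,Jw)=-g(v,J^2w)=g(v,w)$, which is (i). The main obstacle is precisely this direction: $J$-compatibility with $g$ is not assumed, so one cannot invoke self-duality of the Kähler form until orthogonality of $J$ has been extracted, and the extraction is exactly the antisymmetry step above. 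The only other point requiring care is orientation bookkeeping, but membership $J\in\cJ(V)$ guarantees the adapted basis is positively oriented, which makes all the self-duality and volume signs come out correctly.
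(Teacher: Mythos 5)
Your proof is correct, but it takes a genuinely different route from the paper's, most visibly in the direction (ii)$\Rightarrow$(i). The paper disposes of (i)$\Rightarrow$(ii) by direct verification for the standard structures on $V=\C^2$, and proves the converse by a single chain of equalities: using $\dvol_\om=\dvol_g$ and Lemma~\ref{le:ivdvol} it writes $g(v,\cdot)=-*_g\iota(v)\dvol_\om=-*_g\bigl(\om\wedge\iota(v)\om\bigr)$, then applies the hypothesis in (ii) to the particular $1$-form $\lambda=\iota(v)\om$ to conclude $g(v,\cdot)=\om(v,J\cdot)$; this way it never needs to know that $J$ is $g$-orthogonal or that the fundamental form of $(g,J)$ is self-dual. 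Your argument instead runs everything through the contraction identity $*_g(\om\wedge\lambda)=\iota(\lambda^\sharp)(*_g\om)$, converts the star hypothesis into $*_g\om=-g(\cdot,J\cdot)$, extracts $g$-orthogonality of $J$ from antisymmetry of the left-hand side, and then inverts the star using self-duality of $-g(\cdot,J\cdot)$ and $*_g*_g=\id$ on $\Lambda^2V^*$. This costs you two extra ingredients (the orthogonality step and the adapted-basis computation), but it buys a sharper statement: your converse uses only the Hodge-star half of (ii), so the volume condition $\dvol_\om=\dvol_g$ is redundant in that direction and comes out as a consequence rather than an assumption. Your sign and orientation bookkeeping is also sound: the contraction identity holds with sign $+1$ for $2$-forms in dimension four, and membership $J\in\cJ(V)$ is exactly what makes the adapted basis $e_1,Je_1,e_3,Je_3$ positively oriented, so the self-duality claim is legitimate.
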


\begin{proof}
That~(i) implies~(ii) follows by direct verification 
for the standard structures on $V=\C^2$. 
We prove that~(ii) implies~(i). 
Assume $\om,g,J$ satisfy~(ii) and let $v\in V$. 
Then, by Lemma~\ref{le:ivdvol} and~(ii),
$$
g(v,\cdot)
= -*_g\iota(v)\dvol_\om 
= -*_g\bigl(\om\wedge\iota(v)\om\bigr) 
= \iota(v)\om\circ J 
= \om(v,J\cdot).
$$
Hence $\om,g,J$ satisfy~(i).  
This proves Lemma~\ref{le:gomJ}.
\end{proof}

A symplectic form $\om\in\cS(V)$ is called 
{\bf compatible with the inner product $g\in\cG(V)$}
(respectively {\bf compatible with the complex structure $J\in\cJ(V)$})
if there exists a $J\in\cJ(V)$ (respectively a $g\in\cG(V)$)
such that the equivalent conditions~(i) and~(ii) 
in Lemma~\ref{le:gomJ} are satisfied.

\begin{lemma}\label{le:om*}
Let $\om\in\cS(V)$ and $g\in\cG(V)$.
The following are equivalent.

\smallskip\noindent{\bf (i)}
$\om$ is compatible with $g$.

\smallskip\noindent{\bf (ii)}
$\dvol_\om=\dvol_g$ and $\om\in\Lambda^+_g$.
\end{lemma}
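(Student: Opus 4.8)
The plan is to reduce both conditions to a single normal form for the endomorphism relating $\om$ and $g$. Define $B\in\GL(V)$ by $\om(v,w)=g(Bv,w)$ for all $v,w\in V$; since $\om$ is nondegenerate $B$ is invertible, and since $\om$ is skew $B$ is $g$-skew-symmetric. First I would invoke the canonical form for a skew-symmetric operator on a Euclidean space: there is a $g$-orthonormal basis $e_1,e_2,e_3,e_4$ with dual basis $e^1,e^2,e^3,e^4$ in which $\om=a_1\,e^1\wedge e^2+a_2\,e^3\wedge e^4$ for real numbers $a_1,a_2$. After reflecting basis vectors I may assume $a_1,a_2>0$; the hypothesis $\om\wedge\om>0$ (i.e.\ $\om\in\cS(V)$), together with $\om\wedge\om=2a_1a_2\,e^1\wedge e^2\wedge e^3\wedge e^4$, then forces this basis to be positively oriented, so that $\dvol_g=e^1\wedge e^2\wedge e^3\wedge e^4$.

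With the normal form in hand, condition (ii) becomes an elementary computation. On one hand $\dvol_\om=\tfrac12\om\wedge\om=a_1a_2\,\dvol_g$, so $\dvol_\om=\dvol_g$ if and only if $a_1a_2=1$. On the other hand, for the positively oriented orthonormal coframe one has $*_g(e^1\wedge e^2)=e^3\wedge e^4$ and $*_g(e^3\wedge e^4)=e^1\wedge e^2$, hence $*_g\om=a_2\,e^1\wedge e^2+a_1\,e^3\wedge e^4$; thus $\om\in\Lambda^+_g$ if and only if $a_1=a_2$. Combining the two, condition (ii) holds if and only if $a_1=a_2=1$.

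For condition (i) I would first observe that any $J$ realizing $g=\om(\cdot,J\cdot)$ is forced to equal $-B^{-1}$: the relation $g(v,w)=\om(v,Jw)=g(Bv,Jw)=-g(v,BJw)$ holds for all $v$, whence $BJ=-\id$. Thus $\om$ is compatible with $g$ if and only if the canonically determined operator $J:=-B^{-1}$ lies in $\cJ(V)$, i.e.\ satisfies $J^2=-\id$ and is orientation-compatible. Since $J^2=B^{-2}$ and $B^2$ acts as $-a_1^2$ on $\mathrm{span}(e_1,e_2)$ and as $-a_2^2$ on $\mathrm{span}(e_3,e_4)$, one gets $J^2=-\id$ if and only if $a_1=a_2=1$; and in that case $J$ sends $e_1\mapsto e_2$ and $e_3\mapsto e_4$ in the positively oriented basis, so $J\in\cJ(V)$. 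Hence condition (i) is equivalent to $a_1=a_2=1$ as well, and combining with the previous paragraph yields (i)$\iff$(ii). For the implication (i)$\Rightarrow$(ii) one could alternatively read off $\dvol_\om=\dvol_g$ directly from Lemma~\ref{le:gomJ}(ii).

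The routine parts are the $*_g$- and volume-computations once the basis is fixed, and I expect the only genuine care to be needed in the normal-form step: checking that the positivity $a_1,a_2>0$ can be synchronized with positive orientation of the basis, which is exactly where the orientation hypothesis $\om\wedge\om>0$ enters. Everything else is forced by the two scalar conditions $a_1a_2=1$ and $a_1=a_2$.
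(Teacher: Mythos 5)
Your proof is correct, but it takes a genuinely different route from the paper. The paper reduces to the standard quaternionic model: it proves (i)$\Rightarrow$(ii) by direct verification for the standard structures on $\C^2$, and for (ii)$\Rightarrow$(i) it identifies $(V,g)$ with the quaternions $\H$, notes that (ii) forces $\om=\sum_i t_i\om_i$ with $\sum_i t_i^2=1$ in the standard basis $\om_i=dx_0\wedge dx_i+dx_j\wedge dx_k$ of $\Lambda^+_g$, and then exhibits the compatible complex structure explicitly as the unit imaginary quaternion $J=t_1\i+t_2\j+t_3\k$ acting on the left. You instead put the pair $(g,\om)$ into normal form via the $g$-skew-symmetric endomorphism $B$ defined by $\om=g(B\cdot,\cdot)$, reduce both (i) and (ii) to the scalar conditions $a_1=a_2=1$, and identify the unique candidate $J=-B^{-1}$. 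Your argument is more elementary and self-contained: it does not require knowing that $\Lambda^+_g$ is spanned by the hyperK\"ahler triple, it treats both implications uniformly through the invariants $a_1,a_2$, and it yields as a byproduct that the compatible $J$ is unique, namely $-B^{-1}$. The paper's argument is shorter given the quaternionic linear algebra it has already developed (Lemma~\ref{le:quaternion}), fits the hyperK\"ahler theme of the rest of the paper, and in addition parametrizes all compatible forms at once, the unit sphere in $\Lambda^+_g$ corresponding to the unit imaginary quaternions. The two delicate points in your write-up — synchronizing $a_1,a_2>0$ with positive orientation of the basis using $\om\wedge\om>0$, and the forcing argument $BJ=-\one$ showing that $J$ is determined by $(g,\om)$ — are both handled correctly.
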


\begin{proof}
That~(i) implies~(ii) follows by direct verification 
for the standard structures on $V=\C^2$. 
To prove the converse, consider the standard 
inner product and orientation on the quaternions $V=\H$ 
with coordinates ${x=x_0+\i x_1+\j x_2+\k x_3}$.
Define 
$$
\om_i:=dx_0\wedge dx_i+dx_j\wedge dx_k
$$ 
for $i=1,2,3$ and $i,j,k$ a cyclic permutation of $1,2,3$.
%%%DAS - comma inserted
If $\om$ satisfies~(ii), then 
$$
\om=\sum_it_i\om_i,\qquad
t_i\in\R,\qquad \sum_it_i^2=1.
$$ 
Hence $\om$ is compatible with the inner product 
and the complex structure 
$$
J:=t_1\i+t_2\j+t_3\k
$$ 
(acting on $\H$ on the left). This proves Lemma~\ref{le:om*}.
\end{proof}

\begin{lemma}\label{le:det}
Let $\rho\in\cS(V)$ and $g\in\cG(V)$.
%%%DAS - comma inserted
If $u$ and $A$ are defined by~\eqref{eq:uA}, then
$$
\det(A)=u^2.
$$
\end{lemma}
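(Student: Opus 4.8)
The plan is to reduce the statement to the classical identity $\det=(\mathrm{Pf})^2$ for skew-symmetric $4\times4$ matrices. The first step is to observe that the defining relation $g(A\cdot,\cdot)=\rho$ in~\eqref{eq:uA} makes $A$ skew-adjoint with respect to $g$: since $g$ is symmetric and $\rho$ is antisymmetric, one has $g(Av,w)=\rho(v,w)=-\rho(w,v)=-g(Aw,v)=-g(v,Aw)$ for all $v,w\in V$. Hence in any $g$-orthonormal basis the matrix representing $A$ is skew-symmetric.

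Next I would fix an oriented $g$-orthonormal basis $e_0,e_1,e_2,e_3$ with dual basis $e^0,e^1,e^2,e^3$, so that $\dvol_g=e^0\wedge e^1\wedge e^2\wedge e^3$ and $g$ is the identity in this basis. Writing $\rho=\sum_{i<j}\rho_{ij}\,e^i\wedge e^j$ with $\rho_{ij}:=\rho(e_i,e_j)$, the identity $g(Ae_j,e_k)=\rho(e_j,e_k)$ shows that the matrix of $A$ coincides with the skew-symmetric matrix $[\rho_{ij}]$ up to sign and transposition; both operations leave the determinant unchanged in dimension four, so $\det A=\det[\rho_{ij}]$.

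The third step computes $u$ in the same coordinates. Expanding the product gives
$$\rho\wedge\rho=2\bigl(\rho_{01}\rho_{23}-\rho_{02}\rho_{13}+\rho_{03}\rho_{12}\bigr)\,e^0\wedge e^1\wedge e^2\wedge e^3,$$
so the definitions of $u$ and $\dvol_\rho$ in~\eqref{eq:uA} yield $u=\rho_{01}\rho_{23}-\rho_{02}\rho_{13}+\rho_{03}\rho_{12}=\mathrm{Pf}[\rho_{ij}]$, the Pfaffian of the component matrix of $\rho$.

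Finally I would invoke $\det M=(\mathrm{Pf}\,M)^2$ for skew-symmetric $4\times4$ matrices $M$ to conclude $\det A=\det[\rho_{ij}]=(\mathrm{Pf}[\rho_{ij}])^2=u^2$. There is no genuine obstacle; the only point requiring care is the bookkeeping of indices and signs in the second and third steps, namely checking that the quadratic expression arising from $\rho\wedge\rho$ is literally the Pfaffian that squares to $\det[\rho_{ij}]$ (which is why fixing the orientation and the normalization $\dvol_\rho=\tfrac12\rho\wedge\rho$ matters). As an alternative that sidesteps the Pfaffian, one could diagonalize the skew operator $A$ into two $2\times2$ blocks with eigenvalues $\pm i\mu_1$ and $\pm i\mu_2$, giving $\det A=\mu_1^2\mu_2^2$ directly together with $u=\mu_1\mu_2$ from the corresponding block form of $\rho$; I nonetheless prefer the Pfaffian route as the shortest.
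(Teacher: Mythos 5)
Your proof is correct, and it follows the same coordinate strategy as the paper (fix a $g$-orthonormal oriented basis, identify the matrix of $A$ with the component matrix $[\rho_{ij}]$ up to sign and transpose, compute $u$ from $\rho\wedge\rho$), but it differs in the final and decisive step. The paper is entirely self-contained: it writes out the $4\times4$ matrix of $A$ explicitly and evaluates $\det(A)$ by cofactor expansion along the first row, checking by hand that the result is $\bigl(\rho_{01}\rho_{23}+\rho_{02}\rho_{31}+\rho_{03}\rho_{12}\bigr)^2=u^2$. You instead recognize that $u$ is precisely the Pfaffian of $[\rho_{ij}]$ and invoke the classical identity $\det M=(\mathrm{Pf}\,M)^2$ for skew-symmetric $4\times4$ matrices. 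Your route is shorter and makes the conceptual content of the lemma visible (the lemma \emph{is} the Pfaffian identity, with $u=\mathrm{Pf}(A)$ in disguise), at the cost of importing that identity as a black box; the paper's brute-force expansion is longer but proves everything from scratch, which is presumably why the authors chose it for an appendix on linear algebra. A minor point in your favor: you justify why the matrix of $A$ may be replaced by $[\rho_{ij}]$ (skew-adjointness, and invariance of $\det$ under transpose and under negation in even dimension), a bookkeeping step the paper passes over silently when it writes down the matrix~\eqref{eq:A}. Your alternative block-diagonalization argument at the end would also work and is closest in spirit to a basis-free proof, but as you say the Pfaffian route is the shortest.
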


\begin{proof}
Assume $V=\R^4$ with the standard inner product 
and standard orientation.  Denote the coordinates 
on $\R^4$ by $x=(x_0,x_1,x_2,x_3)$ and write
$$
\rho = \sum_{i<j}\rho_{ij}dx_i\wedge dx_j,\qquad 
\rho_{ij}+\rho_{ji} = 0.
$$
The nondegeneracy and orientation condition on $\rho$ asserts that
\begin{equation}\label{eq:u2}
u = \rho_{01}\rho_{23} + \rho_{02}\rho_{31} + \rho_{03}\rho_{12} > 0.
\end{equation}
In the standard basis of $\R^4$ the linear
operator $A$ is represented by the matrix 
\begin{equation}\label{eq:A}
A = 
\left(\begin{array}{cccc}
0 & \rho_{01} & \rho_{02} & \rho_{03} \\
-\rho_{01} & 0 & \rho_{12} & -\rho_{31} \\
-\rho_{02} & -\rho_{12} & 0 & \rho_{23} \\
-\rho_{03} & \rho_{31} & -\rho_{23} & 0
\end{array}\right).
\end{equation}
%%
%%%DAS - only {eq:A} needed here
%It follows from equations~\eqref{eq:u2} and~\eqref{eq:A} that 
It follows from equation~\eqref{eq:A} that 
%%%DAS - end
%%
\begin{eqnarray*}
\det(A)
&=&
\rho_{01}\det
\left(\begin{array}{ccc}
\rho_{01} & \rho_{02} & \rho_{03} \\
-\rho_{12} & 0 & \rho_{23} \\
\rho_{31} & -\rho_{23} & 0
\end{array}\right) \\
&&
-\, \rho_{02}
\det
\left(\begin{array}{ccc}
\rho_{01} & \rho_{02} & \rho_{03} \\
0 & \rho_{12} & -\rho_{31} \\
\rho_{31} & -\rho_{23} & 0
\end{array}\right) \\
&&
+\, \rho_{03} 
\det
\left(\begin{array}{ccc}
\rho_{01} & \rho_{02} & \rho_{03} \\
0 & \rho_{12} & -\rho_{31} \\
-\rho_{12} & 0 & \rho_{23} \\
\end{array}\right) \\
&=&
\rho_{01}\left(\rho_{02}\rho_{23}\rho_{31} + \rho_{03}\rho_{12}\rho_{23}
+ \rho_{01}\rho_{23}^2\right) \\
&&
+\, \rho_{02}\left(\rho_{03}\rho_{12}\rho_{31} 
+ \rho_{01}\rho_{23}\rho_{31}
+ \rho_{02}\rho_{31}^2\right) \\
&&
+\, \rho_{03}\left(\rho_{01}\rho_{23}\rho_{12} + \rho_{02}\rho_{31}\rho_{12}
+ \rho_{03}\rho_{12}^2\right).
\end{eqnarray*}
Thus $\det(A)=u^2$ 
%%%DAS - added
by~\eqref{eq:u2} 
%%%DAS - end
and this proves Lemma~\ref{le:det}.
\end{proof}

\begin{lemma}\label{le:quaternion}
Let $\om_1,\om_2,\om_3\in\cS(V)$ and 
$J_1,J_2,J_3\in\GL(V)$ such that
\begin{equation}\label{eq:om123}
\om_2(\cdot,J_3\cdot):=\om_1,\qquad
\om_3(\cdot,J_1\cdot):=\om_2,\qquad
\om_1(\cdot,J_2\cdot):=\om_3.
\end{equation}
Then
\begin{equation}\label{eq:omijk}
\om_i(J_jv,w)=\om_i(v,J_jw)=\om_k(v,w)
\end{equation}
for every cyclic permutation $i,j,k$ of $1,2,3$ and all $v,w\in V$.
Moreover, the following are equivalent.

\smallskip\noindent{\bf (i)} 
$\om_i\wedge\om_j = 0$ and
$\om_i\wedge\om_i=\om_j\wedge\om_j$
for $1\le i<j\le3$.

\smallskip\noindent{\bf (ii)} 
$J_i^2=-\one$ and $J_jJ_k=-J_kJ_j=J_i$
for cyclic permutations $i,j,k$ of $1,2,3$.

\medskip\noindent
%%%DAS - comma inserted
If these equivalent conditions are satisfied,
then the following holds.

\smallskip\noindent{\bf (a)}
The vectors $v,J_1v,J_2v,J_3v$ form a basis 
of $V$ for every $v\in V\setminus\{0\}$.

\smallskip\noindent{\bf (b)}
$\om_1(v,J_1w)=\om_2(v,J_2w)=\om_3(v,J_3w)$ for $v,w\in V$.

\smallskip\noindent{\bf (c)}
$\om_i(w,J_iv)=\om_i(v,J_iw)$ for $i=1,2,3$ and $v,w\in V$.

\smallskip\noindent{\bf (d)}
$\om_i(v,J_iv)\ne 0$ for $i=1,2,3$ 
and $v\in V\setminus\{0\}$.
\end{lemma}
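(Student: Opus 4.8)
My plan is to treat the statement in three stages: the unconditional identity, the equivalence (i)$\Leftrightarrow$(ii), and the supplementary conclusions (a)--(d).

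For the first identity, the second equality $\om_i(v,J_jw)=\om_k(v,w)$ is literally the defining relation for each cyclic $(i,j,k)$, so only the first equality needs work, and it follows from antisymmetry alone. For instance, for $(i,j,k)=(2,3,1)$ I would write $\om_2(J_3v,w)=-\om_2(w,J_3v)=-\om_1(w,v)=\om_1(v,w)=\om_2(v,J_3w)$, where the middle step is the defining relation applied to the pair $(w,v)$; the remaining two cyclic cases are identical. I would record the by-product that $J_j$ is $\om_i$-symmetric for cyclic $(i,j)$, i.e.\ $J_m$ is $\om_{m-1}$-symmetric. Composing the three relations around the cycle and using nondegeneracy of $\om_1$ gives the unconditional operator identity $J_2J_1J_3=\one$ together with its cyclic rotations.

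Next I would reduce (ii) to the single family $J_m^2=-\one$. Given $J_2J_1J_3=\one$, multiplying by $J_m^{-1}=-J_m$ produces $J_2J_1=-J_3$, $J_1J_3=-J_2$, $J_3J_2=-J_1$, and if in addition $J_m^2=-\one$ these yield all the products and anticommutations of (ii); conversely (ii) gives $J_m^2=-\one$. It therefore suffices to link the wedge conditions of (i) to $J_m^2=-\one$. For (i)$\Rightarrow$(ii) I would first normalize: since a common rescaling of $\om_1,\om_2,\om_3$ leaves every $J_m$ and both (i),(ii) unchanged, the equal-square part of (i) lets me arrange $\om_i\wedge\om_i=2\dvol$ for a fixed volume form. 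Under (i) the $\om_i$ are then $\wedge$-orthogonal with equal positive square, hence span a positive $3$-plane, and Theorem~\ref{thm:DONfour} supplies a unique inner product $g$ with $\dvol_g=\dvol$ and $\Lambda_g^+=\mathrm{span}(\om_1,\om_2,\om_3)$. By Lemma~\ref{le:om*} each $\om_i$ is compatible with $g$, so by Lemma~\ref{le:gomJ} one has $\om_i=g(\hat J_i\cdot,\cdot)$ for compatible $\hat J_i\in\cJ(V)$. Passing to a $g$-orthonormal positively oriented basis identifies $V\cong\H$ with $g$ standard, so $\hat J_i$ is left multiplication $L_{v_i}$ by a unit imaginary quaternion $v_i$, and the model computation $\om_i=\sum_a(v_i)_a\om_a^0$ gives $\om_i\wedge\om_j=2\inner{v_i}{v_j}\dvol_g$. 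Thus (i) forces $v_1,v_2,v_3$ orthonormal in $\mathrm{Im}\,\H$, whence the $\hat J_i$ pairwise anticommute and square to $-\one$; comparing the defining relations with $\om_i=g(\hat J_i\cdot,\cdot)$ yields $J_{i+2}=\hat J_{i+1}\hat J_i^{-1}$, and a short computation using anticommutation gives $J_m^2=-\one$, which by the reduction above is (ii).

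For (ii)$\Rightarrow$(i) and for (a)--(d) I would work directly from the quaternion relations. Part (a) is the freeness of $V$ as a rank-one $\H$-module: for $v\neq0$ the relations make $\{v,J_1v,J_2v,J_3v\}$ independent, and this identifies $V\cong\H$ with $J_i=L_{\i},L_{\j},L_{\k}$. Part (b), the chain $\om_1(v,J_1w)=\om_2(v,J_2w)=\om_3(v,J_3w)$, follows by rewriting $\om_1$ through its defining relation and using $J_3J_1=J_2$, and cyclically. For part (c) I would show $J_i$ is $\om_i$-skew: using $\om_i=-\om_{i-1}(\cdot,J_{i+1}\cdot)$, the $\om_{i-1}$-symmetry of $J_i$ from the first stage, and $J_iJ_{i+1}=J_{i-1}$, one obtains $\om_i(J_iv,w)=-\om_i(v,J_iw)$, which is exactly the symmetry of $g(v,w):=\om_i(v,J_iw)$. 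Then $g$ is symmetric and nondegenerate and each $J_i$ is $g$-skew, so in the model $g$ commutes with every $L_{e_i}$ and is a scalar multiple $c\,g_0$ of the standard form; hence $\om_i=c\,\om_i^0$, giving the orthogonality relations of (i) at once, and part (d) reads $\om_i(v,J_iv)=c\abs{v}^2\neq0$.

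The main obstacle I expect is the (i)$\Rightarrow$(ii) reduction: correctly invoking Theorem~\ref{thm:DONfour} and Lemma~\ref{le:om*} to place the simultaneously normalized triple into quaternionic normal form, and in particular the orientation bookkeeping needed so that $\om_i\wedge\om_j=2\inner{v_i}{v_j}\dvol_g$ and the ensuing anticommutation hold with the correct signs. Once the three forms are normalized to a common square and identified with the standard self-dual forms on $\H$, all remaining identities are mechanical.
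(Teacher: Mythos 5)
Your treatment of the unconditional identity~\eqref{eq:omijk}, your observation that the operator identity $J_2J_1J_3=\one$ holds unconditionally (so that condition~(ii) reduces to the single family of conditions $J_m^2=-\one$), and your derivation of (ii)$\Rightarrow$(i) together with (a)--(d) via the quaternionic module structure and the commutant argument are all correct; that last part is a genuinely different, and arguably slicker, route than the paper's, which instead fixes $v\neq0$ and pulls everything back under $\Phi(x):=x_0v+\sum_ix_iJ_iv$ to obtain $\Phi^*\om_i=\lambda\left(dx_0\wedge dx_i+dx_j\wedge dx_k\right)$.

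However, your proof of the key implication (i)$\Rightarrow$(ii) is circular within this paper. You invoke Theorem~\ref{thm:DONfour} to produce the inner product $g$ with $\dvol_g=\dvol$ and $\Lambda^+_g=\mathrm{span}(\om_1,\om_2,\om_3)$. But the paper's proof of the existence part of Theorem~\ref{thm:DONfour} consists precisely of applying Lemma~\ref{le:quaternion} --- specifically the implication (i)$\Rightarrow$(ii) together with (b), (c), (d) --- to exactly the data you feed into it: a wedge-orthonormal basis $\om_1,\om_2,\om_3$ of a positive three-plane, with the $J_i$ defined by~\eqref{eq:om123}. So, as written, you are assuming the statement being proved. (Lemmas~\ref{le:om*} and~\ref{le:gomJ} are safe to quote, since their proofs are model computations independent of this lemma, but Theorem~\ref{thm:DONfour} is not.) To repair the argument you would either have to supply an independent proof of existence in Theorem~\ref{thm:DONfour} --- possible, for instance via transitivity of the $\GL^+(V)$-action on positive three-planes for the signature-$(3,3)$ wedge pairing, but absent from your proposal --- or argue directly as the paper does: condition~(i) and the contraction identity $\iota(J_jv)\om_i=\iota(v)\om_k$ give $\om_i\wedge\iota(v)\om_i=\om_k\wedge\iota(J_jv)\om_i=-\om_i\wedge\iota(J_jv)\om_k$, hence $\om_k(J_jv,w)=\om_k(v,J_jw)=-\om_i(v,w)$; combining this with~\eqref{eq:omijk} yields $J_3J_2J_1=\one=-J_1J_2J_3$ and $J_i^2=-\one$ by purely algebraic manipulation, with no appeal to Theorem~\ref{thm:DONfour} at all.
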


\bigbreak

\begin{proof}
That~\eqref{eq:om123} implies~\eqref{eq:omijk}
follows from the skew-symmetry of the $\om_i$.

{\bf (i) implies~(ii).}
Since $\iota(J_jv)\om_i=\iota(v)\om_k$,
it follows from~(i) that
$\om_i\wedge\iota(v)\om_i
= \om_k\wedge\iota(v)\om_k 
= \om_k\wedge\iota(J_jv)\om_i 
= -\om_i\wedge\iota(J_jv)\om_k$
for $v\in V$ and every cyclic permutation $i,j,k$ of $1,2,3$.
Hence
\begin{equation}\label{eq:omkji}
\om_k(J_jv,w)=\om_k(v,J_jw)= - \om_i(v,w).
\end{equation}
Second, 
$
\om_2(\cdot,J_3J_2J_1\cdot)
= \om_1(\cdot,J_2J_1\cdot)
= \om_3(\cdot,J_1\cdot)
= \om_2
$,
by equation~\eqref{eq:omijk}, and 
$
\om_2(\cdot,J_1J_2J_3\cdot)
= -\om_3(\cdot,J_2J_3\cdot)
= \om_1(\cdot,J_3\cdot)
= -\om_2
$,
by equation~\eqref{eq:omkji}. Hence
\begin{equation}\label{eq:J123}
J_3J_2J_1=\one=-J_1J_2J_3.
\end{equation}
Third, by~\eqref{eq:omijk} and~\eqref{eq:omkji}, 
$\om_j(\cdot,J_i^2\cdot)=-\om_k(\cdot,J_i\cdot)=-\om_j$
and hence
\begin{equation}\label{eq:quaternion1}
J_1^2=J_2^3=J_3^2=-\one.
\end{equation}
Fourth, $J_2J_1=J_3^{-1}=-J_1J_2$, by~\eqref{eq:J123},
and hence $J_2J_1=-J_3=-J_1J_2$, by~\eqref{eq:quaternion1}.
Multiply this equation by $J_1$ and $J_2$ 
on the left and right to obtain the quaternion relations 
$J_iJ_j=-J_jJ_i=J_k$ for $i,j,k$ cyclic.  
This shows that~(i) implies~(ii).

{\bf (ii) implies~(a).}
Let $v\in V\setminus\{0\}$ and $x_i\in\R$ 
such that $x_0v+\sum_ix_iJ_iv=0$. 
Then 
$$
0 = \left(x_0\one-\sum_{i=1}^3x_iJ_i\right)
\left(x_0v+\sum_{i=1}^3x_iJ_iv\right)
= \left(\sum_{i=0}^3x_i^2\right)v
$$
and hence $x_0=x_1=x_2=x_3=0$. 

{\bf (ii) implies~(b).}
It follows from equation~\eqref{eq:omijk} that,
for $i,j,k$ cyclic,
$\om_i(v,J_iw) = \om_j(J_kv,J_iw) 
= \om_j(v,J_kJ_iw) = \om_j(v,J_jw)$.

{\bf (ii) implies~\eqref{eq:omkji}.}
It follows from equation~\eqref{eq:omijk} that,
for $i,j,k$ cyclic,
$\om_k(J_jv,w)=\om_i(J_jJ_jv,w)=-\om_i(v,w)$.

{\bf (ii) implies~(c).}
It follows from equation~\eqref{eq:omkji} that,
for $i,j,k$ cyclic,
$\om_i(w,J_iv) = \om_i(w,J_jJ_kv) 
= \om_i(J_kJ_jw,v) = \om_i(-J_iw,v) 
= \om_i(v,J_iw)$.

{\bf (ii) implies~(d).}
Fix a nonzero vector $v\in V$.
Then $\om_1(v,v)=0$ and, by~\eqref{eq:omijk} 
and~\eqref{eq:omkji}, $\om_1(v,J_2v)=\om_3(v,v)=0$
and $\om_1(v,J_3v)=-\om_2(v,v)=0$. 
Since $\om_1$ is nondegenerate,
it follows from~(a) that $\om_1(v,J_1v)\ne 0$. 

{\bf (ii) implies~(i).}
Fix a nonzero vector $v\in V$ and define 
$\Phi:\H\to V$ by $\Phi(x):=x_0v+\sum_ix_iJ_iv$.
By~(a) this is an isomorphism.  By~(b) and~(d), 
$$
\lambda:=\om_1(v,J_1v)=\om_2(v,J_2v)=\om_3(v,J_3v)\ne0.
$$
By~\eqref{eq:omijk} and~\eqref{eq:omkji}, we have
$\Phi^*\om_i=\lambda(dx_0\wedge dx_i+dx_j\wedge dx_k)$ for 
$i=1,2,3$ and $i,j,k$ a cyclic permutation of $1,2,3$.
This shows that~(ii) implies~(i).
\end{proof}

\begin{lemma}\label{le:J}
%%%DAS - slight rephrasing
%If $J_1,J_2,J_3\in\cJ(V)$ are compatible with $g\in\cG(V)$ 
%and satisfy $J_iJ_j+J_jJ_i=0$ for $i\ne j$ then $J_3=\pm J_1J_2$.
Assume that $J_1,J_2,J_3\in\cJ(V)$ are compatible with $g\in\cG(V)$ 
and satisfy $J_iJ_j+J_jJ_i=0$ for $i\ne j$.   Then $J_3=\pm J_1J_2$.
%%%DAS - end
\end{lemma}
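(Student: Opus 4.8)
The plan is to reduce the claim to showing that the single linear map
$L:=-J_3J_1J_2$ equals $\pm\one$, since then $J_3=LJ_1J_2=\pm J_1J_2$.
First I would record the algebra of the product $K:=J_1J_2$. As $J_1,J_2$ are
anticommuting complex structures, a direct computation gives $K^2=-\one$ together
with the quaternion relations $J_1J_2=K$, $J_2K=J_1$, $KJ_1=J_2$, and pairwise
anticommutativity of $J_1,J_2,K$. From these relations it follows that
$v,J_1v,J_2v,Kv$ form a basis of $V$ for every $v\in V\setminus\{0\}$: if
$x_0v+x_1J_1v+x_2J_2v+x_3Kv=0$, then applying the operator
$x_0\one-x_1J_1-x_2J_2-x_3K$ and using the quaternion relations yields
$(\sum_ix_i^2)v=0$, forcing all $x_i=0$. (This is exactly part~(a) of
Lemma~\ref{le:quaternion} applied to the triple $J_1,J_2,K$.)

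Next I would exploit the hypothesis that $J_3$ anticommutes with both $J_1$ and
$J_2$. This immediately gives that $K$ commutes with $J_3$, since
$J_3K=J_3J_1J_2=J_1J_2J_3=KJ_3$. Using this together with the quaternion
relations, a short computation shows that $L=-J_3K$ commutes with $J_1$ and with
$J_2$ (hence with $K$), and that
$L^2=(-J_3K)(-J_3K)=J_3^2K^2=\one$.

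Finally I would conclude by a spectral argument. Since $L^2=\one$, the minimal
polynomial of $L$ divides $(t-1)(t+1)$, so $L$ is diagonalizable over $\R$ and
$V=V_+\oplus V_-$ splits into its $+1$- and $-1$-eigenspaces. Because $L$
commutes with $J_1,J_2,K$, each eigenspace is invariant under all three maps. By
the basis property established above, any \emph{nonzero} eigenspace is already all
of $V$; since $V\ne0$, exactly one of $V_\pm$ equals $V$ and the other is $0$.
Hence $L=\one$ or $L=-\one$, and therefore $J_3=LJ_1J_2=\pm J_1J_2$.

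The computations here are all routine, so I do not expect a genuine obstacle; the
only points requiring care are bookkeeping the signs in the commutation
relations (in particular checking $J_3K=KJ_3$ and $L^2=\one$ correctly) and the
observation that the basis property forces each invariant subspace to be $0$ or
all of $V$, which is what rigidifies $L$ to $\pm\one$. Note that beyond $J_3^2=-\one$
this argument uses only the anticommutation of the $J_i$, so compatibility with
$g$ is not actually needed for the conclusion.
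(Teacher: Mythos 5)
Your proof is correct, but it follows a genuinely different route from the paper's. The paper uses the compatibility with $g$ in an essential way: since each $J_i$ is $g$-orthogonal and $J_i^2=-\one$, the maps are skew-adjoint, so for a unit vector $v$ the four vectors $v,J_1v,J_2v,J_3v$ are orthonormal and $J_1J_2v$ is a unit vector orthogonal to $v,J_1v,J_2v$; hence $J_1J_2v=\pm J_3v$ pointwise, and the sign is then constant (implicitly, by connectedness of the unit sphere), giving $J_1J_2=\pm J_3$. Your argument is instead purely algebraic: you turn $V$ into a module over the quaternion algebra generated by $J_1,J_2,K=J_1J_2$, observe that $L=-J_3K$ lies in its commutant and satisfies $L^2=\one$, and then use the basis property (part~(a) of Lemma~\ref{le:quaternion}) to show each $L$-eigenspace is $0$ or all of $V$, forcing $L=\pm\one$. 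What this buys is a strictly stronger statement: as you note, compatibility with $g$ is never used, only $J_i^2=-\one$ and the anticommutation relations, so the lemma holds for any three anticommuting complex structures on a four-dimensional space. What the paper's approach buys is brevity in the context where the lemma is applied (the uniqueness part of Theorem~\ref{thm:DONfour}, where the $\tJ_i$ are compatible with $\tg$ anyway), and it avoids the diagonalization step entirely; on the other hand, your global identification of $L$ as $\pm\one$ sidesteps the pointwise-sign-constancy issue that the paper's last sentence leaves implicit. All your computations check out: $J_2K=J_1$, $KJ_1=J_2$, $J_3K=KJ_3$, $L$ central over the quaternion action, and $L^2=\one$.
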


\begin{proof}
Fix a unit vector $v$. Then 
$g(J_iv,J_jv)=g(v,J_jJ_iv)=-g(J_jv,J_iv)$.
Hence $v,J_1v,J_2v,J_3v$ form an 
orthonormal basis of $V$ and $J_1J_2v$ 
is orthogonal to $v,J_1v,J_2v$. 
Hence $J_1J_2v=\pm J_3v$. 
It follows that $J_1J_2=\pm J_3$.
\end{proof} 

\begin{proof}[Proof of Theorem~\ref{thm:DONfour}]
\label{proof:DONfour} {\bf Existence.}  
Fix a basis $\om_1,\om_2,\om_3$ of $\Lambda^+$ such that
$$
\om_i\wedge\om_j=2\delta_{ij}\dvol.
$$
Choose $J_i\in\GL(V)$ such that~\eqref{eq:om123} 
holds. By Lemma~\ref{le:quaternion}, the 
bilinear map 
$$
V\times V\to\R:(v,w)\mapsto\om_i(v,J_iw)
$$
is independent of $i$, symmetric, and definite.
Assume without loss of generality that $\om_i(v,J_iv)>0$
for all $v\in V\setminus\{0\}$.
(Otherwise, replace the triple $J_1,J_2,\om_3$ by $-J_1,-J_2,-\om_3$.)
Then the inner product $g(v,w):=\om_i(v,J_iw)$ 
is compatible with $\om_i$. Hence it follows from Lemma~\ref{le:om*}
that $\dvol_g=\dvol_{\om_i}$ and $\om_i\in\Lambda^+_g$ 
for $i=1,2,3$. Thus $\dvol_g=\dvol$ and $\Lambda^+_g=\Lambda^+$.

\medskip\noindent{\bf Uniqueness.}
Let $\tg\in\cG(V)$ such that $\Lambda_\tg^+=\Lambda^+$ 
and $\dvol_\tg=\dvol$. By Lemma~\ref{le:om*}, 
the symplectic forms $\om_1,\om_2,\om_3$ are compatible 
with $\tg$. Hence there exist complex structures 
$\tJ_1,\tJ_2,\tJ_3\in\cJ(V)$ such that
$$
\om_i(\cdot,\tJ_i\cdot)=\tg(\cdot,\cdot).
$$
Thus
$$
\om_j(\cdot,\tJ_j\tJ_k\cdot) = \tg(\cdot,\tJ_k\cdot)
= - \om_k(\cdot,\cdot) = \om_j(\cdot,J_i\cdot)
$$
%%%DAS - inserted
by~\eqref{eq:omkji}
%%%DAS - end
and so $\tJ_j\tJ_k=J_i$ for $i,j,k$ cyclic.
Hence
$$
\tJ_j\tJ_k+\tJ_k\tJ_j
=J_i-\tJ_k\tJ_i\tJ_i\tJ_j
=J_i-J_jJ_k=0
$$
for $i,j,k$ cyclic.
By Lemma~\ref{le:J}, 
$$
\tJ_3=\pm\tJ_1\tJ_2=\pm J_3.
$$ 
Since $\om_3(v,J_3v)>0$ and $\om_3(v,\tJ_3v)>0$
for $v\ne 0$, we have $\tJ_3=J_3$.  Hence 
$$
\tg=\om_3(\cdot,\tJ_3\cdot)=\om_3(\cdot,J_3\cdot)=g.
$$
This proves Theorem~\ref{thm:DONfour}.
\end{proof}

\begin{proof}[Proof of Theorem~\ref{thm:FOUR}]
\label{proof:FOUR}
That the linear map $R:\Lambda^2V^*\to\Lambda^2V^*$ 
in~\eqref{eq:R} has the required properties follows by
direct calculation.
 
We prove that~(i) implies~(ii).
By Lemma~\ref{le:det}, $\det(-A^2)=\det(A)^2=u^4$
and hence the inner product $\tg(v,w):=u^{-1}g(Av,Aw)$
has the volume form 
$$
\dvol_\tg = \dvol_g = u^{-1}\dvol_\rho.
$$
Now let $\lambda\in V^*$ and choose $v\in V$ 
such that $\tg(v,\cdot)=\lambda$.  Then,
by Lemma~\ref{le:ivdvol},
\begin{equation}\label{eq:lambdatg}
*_\tg\lambda 
= *_\tg\tg(v,\cdot) 
= \iota(v)\dvol_\tg 
= u^{-1}\iota(v)\dvol_\rho 
= u^{-1}\rho\wedge\iota(v)\rho.
\end{equation}
Since $\iota(v)\rho=g(Av,\cdot)$ it follows also from 
Lemma~\ref{le:ivdvol} that 
\begin{equation*}
\begin{split}
*_g\iota(v)\rho 
&= 
\iota(Av)\dvol_g 
=
u^{-1}\iota(Av)\dvol_\rho 
=
u^{-1}\rho\wedge\iota(Av)\rho \\
&=
u^{-1}\rho\wedge g(A^2v,\cdot) 
=
- \rho\wedge \tg(v,\cdot) 
=
- \rho\wedge\lambda.
\end{split}
\end{equation*}
Thus $\iota(v)\rho=*_g(\rho\wedge\lambda)$ and so
$*_\tg\lambda = u^{-1}\rho\wedge*_g(\rho\wedge\lambda)$
by~\eqref{eq:lambdatg}.
This shows that $\tg$ satisfies~(ii).

We prove that~(ii) implies~(iii).
Assume $\tg$ satisfies~(ii) and let $v\in V$.
Use the equation 
$
u^{-1}(\rho\wedge\iota(v)\rho)
= u^{-1}\iota(v)\dvol_\rho
= \iota(v)\dvol_g
$
to obtain
$$
*_\tg\iota(v)\rho 
=
u^{-1}\rho\wedge*_g(\rho\wedge\iota(v)\rho) 
=
\rho\wedge*_g\iota(v)\dvol_g 
=
- \rho\wedge g(v,\cdot).
$$
Here the last step follows from Lemma~\ref{le:ivdvol}.
This shows that $\tg$ satisfies~(iii).

We prove that~(iii) implies~(iv).
Assume $\tg$ satisfies~(iii). Let $\om\in\cS(V)$ 
and $J\in\cJ(V)$ such that $\om(\cdot,J\cdot)=g$.
Then, by Lemma~\ref{le:gomJ}, 
\begin{equation}\label{eq:compatible}
\dvol_\om=\dvol_g,\qquad
*_g(\om\wedge\lambda) = -\lambda\circ J
\end{equation}
for every $\lambda\in V^*$.  Define $\tom$ and $\tJ$ by
$\tom:=R\om$ and $\rho(\tJ\cdot,\cdot):=\rho(\cdot,J\cdot)$.
Then $\tom\wedge\tom=\om\wedge\om$ and so
$\dvol_\tom = \dvol_\om = \dvol_g = \dvol_\tg$ by~(iii).
Now let $\lambda\in V^*$ and choose $v\in V$ 
such that $\iota(v)\rho=\lambda$.  Then
$$
\lambda\circ\tJ
= \iota(v)\rho\circ\tJ
= \rho(v,\tJ\cdot)
= \rho(Jv,\cdot)
= \iota(Jv)\rho.
$$ 
Abbreviate $K:=\frac{\om\wedge\rho}{\dvol_\rho}$.  Then $\tom=\om-K\rho$
and so
$$
\tom\wedge\lambda
= \om\wedge\iota(v)\rho - K\iota(v)\dvol_\rho
= \om\wedge\iota(v)\rho - \iota(v)(\om\wedge\rho)
= -(\iota(v)\om)\wedge\rho.
$$
%%%DAS - corrected - bracket removed and two minus signs inserted
%This implies
%$$
%*_\tg(\tom\wedge\lambda)
%= -*_\tg(\rho\wedge(\iota(v)\om)
%= -*_\tg(\rho\wedge g(Jv,\cdot))
%= \iota(Jv)\rho
%= \lambda\circ\tJ.
%$$
By~(iii) this implies
$$
*_\tg(\tom\wedge\lambda)
= -*_\tg(\rho\wedge\iota(v)\om)
= -*_\tg(\rho\wedge g(Jv,\cdot))
= - \iota(Jv)\rho
= - \lambda\circ\tJ.
$$
%%%DAS - end
Hence $\tom(\cdot,\tJ\cdot)=\tg$ by Lemma~\ref{le:gomJ}.
This shows that $\tg$ satisfies~(iv).

\bigbreak

We prove that~(iv)  implies~(v).
Assume $\tg$ satisfies~(iv) and choose a symplectic form 
$\om\in\cS(V)$ that is compatible with $g$.
Then $\tom:=R\om$ is compatible with $\tg$ by~(iv),
and hence 
$$
\dvol_\tg = \dvol_\tom = \dvol_\om = \dvol_g
$$
by Lemma~\ref{le:om*}.  
%%%DAS - comma inserted, two commas removed
If $\om\in\Lambda^+_g\setminus\{0\}$, then by Lemma~\ref{le:om*} 
there is a $c>0$ such that $c\om$ is compatible with $g$, 
hence $cR\om$ is compatible with $\tg$ by~(iv), 
and hence $c\tom\in\Lambda^+_\tg$ by Lemma~\ref{le:om*}.
This shows that $R\Lambda^+_g\subset\Lambda^+_\tg$.  
Since $R$ is an involution of $\Lambda^2V^*$, the subspace 
$R\Lambda^+_g$ has dimension three and hence 
agrees with $\Lambda^+_\tg$. This shows that $\tg$ satsfies~(v).

We prove that~(v) implies~(vi).
The map $R:\Lambda^2V^*\to\Lambda^2V^*$ in~\eqref{eq:R}
is an involution and preserves the exterior product, i.e.\ 
$$
R\circ R=\id,\qquad R\om\wedge R\tau =\om\wedge\tau
$$
for all $\om,\tau\in\Lambda^2V^*$.  By~(v) it also satisfies
$$
R\Lambda^+_g = \Lambda^+_\tg.
$$
%%%DAS - comma inserted
If $\tau\in\Lambda^-_g$, then $R\tau\wedge R\om=\tau\wedge\om=0$
for all $\om\in\Lambda^+_g$, hence $R\tau\wedge\tom=0$ for every
$\tom\in\Lambda^+_\tg$, and hence $R\tau\in\Lambda^-_\tg$.
Thus $R\Lambda^-_g = \Lambda^-_\tg$. It follows that 
$$
R*_g\om=R\om=*_\tg R\om,\qquad
R*_g\tau=-R\tau=*_\tg R\tau
$$
for all $\om\in\Lambda^+_g$
and all $\tau\in\Lambda^-_g$.
This shows that $R*_g=*_\tg R$ on $\Lambda^2V^*$ 
and hence $\tg$ satisfies~(vi).

We prove that~(vi) implies~(i).
Let $\tg\in\cG(V)$ be any inner product that satisfies~(vi)
and let $h\in\cG(V)$ be the inner product defined by the formula
$$
h(v,w):=u^{-1}g(Av,Aw)
$$ 
in~(i).  Since we have already proved that~(i) implies~(vi), 
the inner products $\tg$ and $h$ both satisfy~(vi).  Thus they 
have the same volume form and the same Hodge $*$-operator
on $2$-forms.  Hence
$$
\dvol_\tg=\dvol_h,\qquad \Lambda^+_\tg=\Lambda^+_h
$$
and so $\tg=h$ by Theorem~\ref{thm:DONfour}.
In other words, every inner product $\tg\in\cG(V)$ that satisfies~(vi) 
is given by $\tg(v,w)=u^{-1}g(Av,Aw)$.  
This completes the proof of Theorem~\ref{thm:FOUR}.
\end{proof}

%%%%%%%%%%%%%%%%%%%%%%%%%%%%%%%%%%%%%%%%%%%%%%%%%%
%%%%%%%%%%%%%%%%%%%%%%%%%%%%%%%%%%%%%%%%%%%%%%%%%%
%%%%%%%%%%%%%%%%%%%%%%%%%%%%%%%%%%%%%%%%%%%%%%%%%%
%%%%%%%%%%%%%%%%%%%%%  APPENDIX B  %%%%%%%%%%%%%%%
%%%%%%%%%%%%%%%%%%%%%%%%%%%%%%%%%%%%%%%%%%%%%%%%%%
%%%%%%%%%%%%%%%%%%%%%%%%%%%%%%%%%%%%%%%%%%%%%%%%%%
%%%%%%%%%%%%%%%%%%%%%%%%%%%%%%%%%%%%%%%%%%%%%%%%%%

\section{Quaternionic Subspaces}\label{app:H}

Denote by $\H\cong\R^4$ the quaternions and by 
$\Sp(1)\cong S^3$ the unit quaternions.
For $\lambda\in\H$ define
$V_\lambda := \left\{(x,x\lambda)\,|\,x\in\H\right\}$.
Thus $V_\lambda$ is the unique quaternionic subspace 
of $\H^2$ of real dimension four that contains 
the pair $(1,\lambda)$. 

\begin{lemma}\label{le:H}
Let $W\subset\H^2$ be a real linear
subspace of real dimension $\dim^\R W\le 4$.
Then there exists an element $\lambda\in\H$
such that $V_\lambda\cap W=0$. 
\end{lemma}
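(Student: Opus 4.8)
The plan is to recast the condition $V_\lambda\cap W\neq 0$ as an incidence relation in the pair $(\lambda,x)$ and then to show, by a dimension count, that the set of ``bad'' $\lambda$ (those for which the intersection is nonzero) is too small to fill $\H$. Since every nonzero element of $V_\lambda$ has the form $(x,x\lambda)$ with $x\in\H\setminus\{0\}$, we have $V_\lambda\cap W\neq 0$ if and only if there is some $x\neq 0$ with $(x,x\lambda)\in W$. I would therefore study the incidence set
\[
\tilde I:=\bigl\{(\lambda,x)\in\H\times S^3 \,\big|\, (x,x\lambda)\in W\bigr\},
\]
where $S^3\subset\H$ is the unit sphere (the normalization $\abs{x}=1$ is harmless, since $(x,x\lambda)\in W$ iff $(tx,tx\lambda)\in W$ for real $t\neq 0$). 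The bad locus $B\subset\H$ is exactly the image of $\tilde I$ under the projection $(\lambda,x)\mapsto\lambda$, so it suffices to bound $\dim\tilde I$.

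To do this I would fibre $\tilde I$ over the $x$-variable. Let $\pi_1\colon\H^2\to\H$ be the first projection, put $P:=\pi_1(W)$ and $W_0:=W\cap(\{0\}\times\H)$, and observe by rank--nullity that $\dim P+\dim W_0=\dim^\R W$. For fixed $x\neq 0$ the map $\lambda\mapsto(x,x\lambda)$ is an affine isomorphism of $\H$ onto the slice $\{x\}\times\H$, because left multiplication by $x$ is an $\R$-linear automorphism of $\H$. Consequently $\{\lambda:(x,x\lambda)\in W\}$ is empty when $x\notin P$ and is an affine subspace of dimension $\dim W_0$ when $x\in P$. Thus $\tilde I$ fibres over $P\cap S^3$, a sphere of dimension $\dim P-1$, with fibres of dimension $\dim W_0$, giving
\[
\dim\tilde I\le(\dim P-1)+\dim W_0=\dim^\R W-1\le 3 .
\]
The final inequality is precisely where the hypothesis $\dim^\R W\le 4$ is used; note it fails for $\dim^\R W=5$, as it must.

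Finally, $\tilde I$ is a semialgebraic set (cut out by the linear equations defining $W$, read as polynomial relations in $(\lambda,x)$, together with $\abs{x}^2=1$), so its image $B$ under the polynomial projection to $\H$ is again semialgebraic with $\dim B\le\dim\tilde I\le 3$. Hence $B$ has Lebesgue measure zero in $\H\cong\R^4$ and is in particular a proper subset, so any $\lambda\in\H\setminus B$ satisfies $V_\lambda\cap W=0$.

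The routine content is the dimension bookkeeping for this fibration; the only point needing care is that projecting a semialgebraic set does not raise its dimension, which is what legitimizes the bound $\dim B\le\dim^\R W-1$. I do not expect a deeper obstacle: morally the statement is just the transversality count that a fixed real $4$-plane meets the generic member of the $4$-parameter family of quaternionic lines $V_\lambda$ only at the origin, and passing to the scaling quotient $S^3$ in the $x$-variable is exactly what pushes the bad locus strictly below the critical dimension $4$.
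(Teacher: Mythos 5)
Your proof is correct, but it replaces the paper's key technical steps with different ones, so a comparison is in order. Both you and the paper study the same incidence set (your $\tilde I$ is, up to swapping factors, the paper's $f^{-1}(W)$ for $f(x,\lambda)=(x,x\lambda)$ on $\Sp(1)\times\H$) and both conclude by showing that its projection to the $\lambda$-coordinate cannot cover $\H$. The paper gets the dimension bound by proving that $f$ is \emph{transverse} to every real linear subspace $W$ (a short explicit computation), so that $f^{-1}(W)$ is a smooth submanifold of dimension $\le 3$, and then applies Sard's theorem to the projection $f^{-1}(W)\to\H$. You instead obtain the bound by pure linear algebra: rank--nullity splits $\dim^\R W$ as $\dim P+\dim W_0$, and $\tilde I$ fibres over $P\cap S^3$ with affine fibres $x^{-1}(y_0(x)+W_0)$ of dimension $\dim W_0$; you then control the image $B$ via semialgebraic dimension theory (Tarski--Seidenberg, plus the facts that fibre dimension bounds total dimension and that semialgebraic images do not gain dimension). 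Your route avoids any transversality check and makes the structure of the incidence set completely explicit, at the cost of invoking semialgebraic machinery. Note that you could dispense with the latter entirely: choosing a linear section $\sigma\colon P\to\H$ of $\pi_1|_W$ (so $W=\{(x,\sigma(x)+w)\,:\,x\in P,\ w\in W_0\}$), your bad locus is exactly the image of the smooth map $(x,w)\mapsto \bar{x}\,(\sigma(x)+w)$ defined on the manifold $(P\cap S^3)\times W_0$ of dimension $\dim^\R W-1\le 3$, hence has measure zero in $\H\cong\R^4$ by the easy half of Sard's theorem --- which brings your argument essentially back in line with the paper's, minus the transversality step.
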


\begin{proof}
The proof is a standard transversality argument 
and has two steps.

\medskip\noindent{\bf Step~1.}
{\it Define $f:\Sp(1)\times\H\to\H^2$ by
$f(x,\lambda) := (x,x\lambda)$ for $x\in\Sp(1)$ 
and $\lambda\in\H$. Then $f$ is transverse to 
every real linear subspace of $\H^2$.}

\medskip\noindent
Let $W\subset\H^2$ be a real linear subspace
and let $(x,\lambda)\in\Sp(1)\times\H$ such that
$f(x,\lambda)\in W$.  We must prove that
$\im df(x,\lambda) + W =\H^2$.
To see this, fix any pair $(\xi,\eta)\in\H^2$ 
and define $\xhat := \xi-\inner{\xi}{x}x$ and
$\lahat := x^{-1}(\eta-\xi\lambda)$. Then 
$$
df(x,\lambda)(\xhat,\lahat) - (\xi,\eta)
= (\xhat-\xi,\xhat\lambda+x\lahat-\eta) 
= - \inner{\xi}{x}(x,x\lambda) 
\in W.
$$
This proves Step~1.

\medskip\noindent{\bf Step~2.}
{\it We prove the lemma.}

\medskip\noindent
Let $W\subset\H^2$ be a real linear subspace of
real dimension at most four.  Then the set
$\cM:=f^{-1}(W)=\left\{(x,\lambda)\in\Sp(1)\times\H\,|\,
(x,x\lambda)\in W\right\}$ is a smooth submanifold 
of $\Sp(1)\times\H$ of (real) dimension at most three
by Step~1. Hence the projection $\cM\to\H:(x,\lambda)\mapsto\lambda$
is not surjective by Sard's theorem.  Hence there exists 
an element $\lambda\in\H$ such that 
$\cM\cap(\Sp(1)\times\{\lambda\})=\emptyset$
and so $V_\lambda\cap W=0$. This proves Lemma~\ref{le:H}.
\end{proof}

%%%%%%%%%%%%%%%%%%%%%%%%%%%%%%%%%%%%%%%%%%%%%%%%%%
%%%%%%%%%%%%%%%%%%%%%%%%%%%%%%%%%%%%%%%%%%%%%%%%%%
%%%%%%%%%%%%%%%%%%%%%%%%%%%%%%%%%%%%%%%%%%%%%%%%%%
%%%%%%%%%%%%%%%%%%%%%%  References %%%%%%%%%%%%%%%
%%%%%%%%%%%%%%%%%%%%%%%%%%%%%%%%%%%%%%%%%%%%%%%%%%
%%%%%%%%%%%%%%%%%%%%%%%%%%%%%%%%%%%%%%%%%%%%%%%%%%
%%%%%%%%%%%%%%%%%%%%%%%%%%%%%%%%%%%%%%%%%%%%%%%%%%

\end{document}